\newcommand{\R}{\mathbb{R}}
\newcommand{\C}{\mathbb{C}}
\newcommand\tot{\to0 \text{ as } t\to\infty}
\newcommand{\ap}{\mathcal{AP}}
\newcommand{\aap}{\mathcal{AAP}}
\newcommand{\ml}{\mathcal}
\newcommand\apr{\ml{AP}(\R)}
\newcommand\aprc{\ml{AP}(\R,\C)}
\newcommand\aapr{\ml{AAP}(\R)}
\newcommand\aaprc{\ml{AAP}(\R,\C)}
\newcommand\aapor{\ml{AAP}_0(\R)}
\newcommand\aaporc{\ml{AAP}_0(\R,\C)}
\newcommand\lpaprc{\ml{AP}(\R,\C,p)}
\newcommand\lpaporc{\ml{AP}_0(\R,\C,p)}
\DeclareMathOperator{\sgn}{{sgn}}
\newtheorem{Theorem}{Theorem}[section]
\newtheorem{Remark}[Theorem]{Remark}
\newtheorem{Lemma}[Theorem]{Lemma}
\newtheorem{Corollary}[Theorem]{Corollary}
\newtheorem{Proposition}[Theorem]{Proposition}
\newtheorem{Definition}[Theorem]{Definition}
\numberwithin{equation}{section}
\begin{document}

	\title[Poincar\'e-Perron problem in the class of almost periodic functions]{Poincar\'e-Perron problem for high order differential equations in the class of almost periodic type functions}
	\date{\today}
	\subjclass[2020]{34A05, 34A30, 34E10, 34E05}

\keywords{Perturbed high order linear differential equation, almost periodic solutions, Riccati type equation}

\author[H. Bustos]{H. Bustos}	
	\address{H. Bustos, Centro de Docencia de Ciencias B\'asicas para Ingenier\'ia, Facultad de Ciencias de la Ingenier\'ia, Universidad Austral, Campus Miraflores, Valdivia, Chile}
	\email{harold.bustos@uach.cl}
	
\author[P. Figueroa]{P. Figueroa}	
	\address{P. Figueroa, Instituto de Ciencias F\'isicas y Matem\'aticas, Facultad de Ciencias, Universidad Austral, Campus Isla Teja, Valdivia, Chile}
\email{pablo.figueroa@uach.cl}
	
	\author[M. Pinto]{M. Pinto.}	
	\address{M. Pinto, Departamento de Matem\'aticas, Universidad de Chile, Las Palmeras 3425, Casilla 653, Santiago, Chile}
\email{pintoj.uchile@gmail.com}
	\begin{abstract}
We address the Poincar\'e-Perron's classical problem of approximation for \textsl{high order} linear differential equations in the class of almost periodic type functions, extending the results for a second order linear differential equation in \cite{FP4}. We obtain explicit formulae for solutions of these equations, for any fixed order $n\ge 3$, by studying a Riccati type equation associated with the logarithmic derivative of a solution.  Moreover, we provide sufficient conditions to ensure the existence of a fundamental system of solutions. The fixed point Banach argument allows us to find almost periodic and asymptotically almost periodic solutions to this Riccati type equation. A decomposition property of the perturbations induces a decomposition on the Riccati type equation and its solutions. In particular, by using this decomposition we obtain asymptotically almost periodic and also $p$-almost periodic solutions to the Riccati type equation. We illustrate our results for a third order linear differential equation.
	\end{abstract}
\maketitle
 
	\section{Introduction}
Several problems of differential equations, stability theory, dynamical systems and many others has been studied in connection with the development of the theory of almost periodic functions, see for instance \cite{C,Di,PRO2011}. Precisely, almost periodic functions have been useful when one looks over long enough time scales, since they come arbitrarily close to being periodic and they play an important role in describing the phenomena that are similar to the periodic oscillations which can be observed frequently in many fields such as celestial mechanics, nonlinear vibration, electromagnetic theory, plasma physics, engineering, ecosphere, life sciences and so on \cite{Anh,Cordu2009,Fe,Go,Im,JZT,PRO2010}. Furthermore, the concept of asymptotic almost periodicity was introduced in the literature \cite{Fre1,Fre2} by Fr\'echet in the early 1940s, as a natural extension of almost periodicity. Since then, this notion has generated lots of developments and applications. In particular, many authors apply the asymptotic property of asymptotically almost periodic functions to determine the existence of almost periodic solutions to various ordinary differential equations, partial differential equations, differential equations in Banach spaces, integro-differential equations as well as fractional differential equations, see for instance \cite{CaoHuang,Ch,CuHeSo,RP,Z} and the references therein. 

\medskip
On the other hand, high order differential equations have been intensively and extensively studied during the last decades. They have been attracted great attention due to their possible applications to different areas such as theoretical physics, population dynamics, biology, ecology, etc., see for example \cite{F,Om3,Ol, PW2}. In this context, the classical Poincar\'e-Perron's problem of approximation for \textsl{high order} linear differential equations consists in
\begin{equation}\label{poinca1}
	 y^{(n)}(t)+\sum_{i=0}^{n-1} (a_i+r_i(t))y^{(i)}(t)=0,
\end{equation}
where $a_0,\dots,a_{n-1}\in\mathbb{C}$ are constants and functions $r_0(t),\dots,r_{n-1}(t)$ are small in some sense with $n\ge 2$. Precisely, Poincar\'e \cite{Po} proved in 1885 the existence of a solution $y$ to \eqref{poinca1} such that $y'(t)/y(t)$ converges as $t\to +\infty$, assuming that the roots $\lambda_1,\dots,\lambda_n$ of the equation $x^n+\sum_{i=0}^{n-1} a_i x^{i}=0$ have distinct real part, $r_i$'s are continuous on $[t_0,+\infty)$ and $r_i\in C_0$, i.e., $r_i(t)\to 0$ as $t\to +\infty$ for all $i=0,\dots,n-1$. At the beginning of the twentieth century, Perron \cite{Perr} improved this result, under the same assumptions, assuring the existence of $n$ linearly independent solutions $y_1, \dots, y_n$ with the same asymptotic behaviour, namely, logarithmic derivative $y_i'(t)/y_i(t)$ converges to $\lambda_i$ as $t\to +\infty$. However, they do not provide a more precise formula for the asymptotic behaviour of solutions in either case.

\medskip
Another perturbations have been also studied, for instance $r_i\in L^p$ for some $p\ge 1$, in systems of linear differential equations, which can be applied to this equation. These results are due to Levinson \cite[Th. 1.3.1]{East} for $p=1$, to Hartman-Wintner \cite[Th. 1.5.1]{East} for $p\in (1,2]$ and to Harris-Lutz \cite{Hl} for $p>2$. Related results have been obtained by Hartman \cite[Th. 17.2]{Hartman}, \u Sim\u sa \cite{Simsa85} and Trench \cite{Trench84} assuming mild integrable conditions: $|r_i(t)|t^q$ or $r_i(t)t^q$ are integrable. In several works have been also studied cases $n=2$ in \cite{FP1}, $n=3$ in \cite{FP2,FP3} and $n=4$ in \cite{CHP1,CHP2}, recovering Poincar\'e and Perron's results and providing precise asymptotic formulae for solutions to problem \eqref{poinca1} and estimated their errors, in the class of $C_0$ and $L^p$ functions. In almost diagonal linear systems, perhaps the first results in this sense were obtained in Pinto et al. \cite{PRO2011,PR,MPST}. Recently, in \cite{BFP} we have recovered Poincar\'e and Perron's results and extended results in \cite{CHP1,CHP2,FP1,FP2,FP3} for any $n\ge 5$, namely, we have obtained a precise asymptotic formulae for solutions to problem \eqref{poinca1} and estimated their errors, in the class of $C_0$ and $L^p$ functions.

\medskip
To the best of our knowledge, concerning different type of perturbations $r_i$'s in \eqref{poinca1} such as almost periodic type functions (see Definitions \ref{fapdef} and \ref{faplpdef}) few results are available. In particular, in case $n=2$ the authors in \cite{FP4} addressed equation
$$y''+(a_1+r_1(t))y'+(a_0+r_0(t))y=0, \qquad t\in\R$$
in the class of almost periodic type functions: $r_i\in\ap(\R,\C)$ or $r_i\in\aap(\R,\C)$ or $r_i\in \ml{AP}(\R,\C,p)$, obtaining explicit formulae for solutions by studying a Riccati equation associated with the logarithmic derivative of a solution.

\medskip In this paper, inspired by results in \cite{BFP,FP4} we are interested in generalize Poincar\'e's and Perron's classical problem of approximation \eqref{poinca1} to the class of almost periodic type functions for any $n\ge 3$, namely, equation \eqref{poinca1} with either $r_i\in\ap(\R,\C)$ or $r_i\in\aap(\R,\C)$ or $r_i\in \ml{AP}(\R,\C,p)$. Precisely, we have obtained explicit formulae for solutions of these equations under sufficient conditions by studying a Riccati type equation associated with the logarithmic derivative of a solution. In particular case $n=3$, if $\lambda$, $\lambda_1$ and $\lambda_2$ are roots of $P(a;x)=x^3+a_2x^2+a_1x+a_0$ and have distinct real parts, and $r_i$, $i=0,1,2$ are sufficiently small in $L^\infty$-sense and almost periodic, then there is a solution $y:\R\to\C$ satisfying \eqref{eqn3} and
\begin{equation}\label{fth1n3}
    \begin{split}
        y_\lambda(t)=&\, e^{\lambda t}\exp\left(-\gamma_1^{-1}\gamma_2^{-1} \int_{0}^t [P(r;\lambda)(s) +\mathcal{L}(s,z(s))+\mathcal{F}(s,Z(s))]\,ds\right)\\
        &\times \exp\left(  -\sum_{j=1}^2\frac{1}{\Gamma_j\gamma_j} G_{\gamma_j}[P(r;\lambda)+\mathcal{L}(\cdot,z)+\mathcal{F}(\cdot,Z)](t)\right).
    \end{split}
\end{equation}
where
\begin{equation}\label{prn3}
        P(r(t);\lambda)=\lambda^2 r_2(t)+\lambda r_1(t) +r_0(t),
    \end{equation}
    \begin{equation}
        \mathcal{L}(t,z)=r_2(t)z'+(2\lambda r_2(t) +r_1(t))z \qquad\text{ and }
    \end{equation}
    \begin{equation}\label{fzn3}
        \mathcal{F}(t,z,z')= (a_2+r_2(t)+3\lambda)z^2  + 3 zz'+z^3
    \end{equation}
and $z$ satisfies a Riccati type equation, is an almost periodic complex value function. Some conditions (see \eqref{cl}) insure the estimate $z^{(i)}=O\big((I_{\beta}+I_{-\beta})[P(r;\lambda)]\big)$, $i=0,1$,
where $0<\beta<\min\{\Re(\lambda_j - \lambda)\ :\ j=1,2\}$ of $z$. 
We have denoted $\gamma_i=\lambda_i-\lambda$ and $\Gamma_i=(-1)^{i}(\lambda_2-\lambda_2)$ for $i=1,2$ and we have used Green's operators acting on locally integrable functions $f:\mathbb{R}\to\C$ given by
\begin{equation}\label{defls}
G_{\omega}\big[f\big](t):=\int_{-\infty}^{\infty} g_{\omega}(t,s)f(s)\,ds\quad\text{and}\quad	I_{\omega}\big[f\big](t):=\int_{-\infty}^{\infty}|g_{\omega}(t,s) f(s)|\,ds\,,
\end{equation}
where
\begin{equation}\label{defg}
g_{\omega}(t,s):=
\begin{cases}
-{\rm sgn}(\Re{\omega}) e^{\omega(t-s)} & {\rm sgn}(\Re{\omega})(t-s) < 0 \\
0 & \text{otherwise}
\end{cases} \,,
\end{equation}
for a given $\omega\in \C$ with $\Re\omega\ne 0$, and $\Re{\omega}$ denotes the real part of $\omega$. See Theorem \ref{orederzun} for the precise statement for any $n\ge 3$. Furthermore, we have obtained sufficient conditions to ensure the existence of a \textsl{fundamental system of solutions} of the form \eqref{fth1n3}, see Theorem \ref{base}.

\medskip
In general, given a $(n+1)$-tuple $c=(c_0,\dots,c_{n})$ of complex numbers,  we will denote $P(c;x):=\sum_{i=0}^{n}c_ix^{i}$, so that $P(a;x)=x^n+\sum_{i=0}^{n-1} a_i x^{i}$ is the characteristic polynomial of \eqref{poinca1} with $r_i\equiv0$, $i=0,\dots,n-1$, where $a=(a_0,\dots ,a_{n-1}, 1)$, $a_i$'s are the complex numbers associated to the equation \eqref{poinca1} and $a_n=1$. Hence, as in \cite{Perr,Po}, we will also assume that $P(a;x)$ has $n$ roots, $\lambda,\lambda_1,\dots,\lambda_{n-1}$, with different real parts, so that the characteristic polynomial of \eqref{linearz} $P_D(a,x)$ (see \eqref{defdpol}) will have roots with different and non zero real parts. Therefore an exponential dichotomy can be considered a scalar exponential dichotomy with scalar Green's functions \eqref{defg}. 
These operators $G_\omega$ and $I_\omega$ allows us to define the Green's function $\mathcal{G}$ and Green's operator $G$ for equation $\mathcal{D}z=f$ (see equations \eqref{greenfor} and \eqref{greenop}), so that an integral equation for $z$ is obtained, see \eqref{eiz}. 

 \medskip

On the other hand, these type of results (Theorems \ref{orederzun} and \ref{base}) are also true for $r_i$, $i=0,\dots,n-1$ in either $C_0$ or $L^p$, see \cite{Bellm,BFP,Co,East}. In general, $z$ is related with the logarithmic derivative of $y$ and represent the error function belonging to spaces under consideration $C_0$, $L^p$ or almost periodic type functions, and is small with respect to the norm in the space; $L^\infty$ in our case. The fixed point Banach argument allows us to find such an almost periodic or asymptotically almost periodic solution $z$ to a Riccati type equation, see \eqref{dif}. Furthermore, this procedure allows us to address $p$-almost periodic perturbations, see Definition \ref{faplpdef}. In other words, there exist
several classes of almost periodic type functions verifying these
results. This is the case of those with a summand $C_0$ or $L^p$
(asymptotically almost periodic functions or $p$-almost periodic
functions), namely, $r_i=\mu_i+\nu_i$, $i=0,\dots,n-1$ with
$\mu_i\in\aprc$ and either $\nu_i\in C_0$ or $\nu_i\in L^p$ for
$i=0,\dots,n-1$, see Theorems \ref{teoczero} and \ref{teo35n3}. This decomposition of the coefficients $r_i$, $i=0,\dots,n-1$ induces the decomposition $z=\theta+\psi$, where $\theta$ is the almost periodic part and $\psi$ is $C_0$ or $L^p$, respectively. These functions $\theta$ and $\psi$ satisfy their own equations, which can be treated and
solved separately, implying the existence of new solutions. See equations \eqref{thetaap}-\eqref{eqpsi} and Theorems \ref{teoczero}
and \ref{teo35n3}. In section 4, we will discuss these results through an illustrative
example. Finally, we point out that the used method is scalar
\cite{FP1,FP2,FP3}, reducing the order of the equation and
avoiding the usual diagonalization process \cite{Co,East,Gin1}.

	\section{Preliminaries}\label{pre}
	
	Following ideas presented in \cite{FP4}, here we present first almost periodic type functions, then Green's type operators, complete Bell polynomials and we end up this section with the Riccati equation and almost periodic functions.
	
	\subsection{Almost periodic type functions}

According to \cite{Co,LZ,Z}, the notion of almost periodic function we shall use is the following.

\begin{Definition}\label{fapdef}{\rm
\begin{itemize}
\item[$a$)] A continuous function $r:\R\to\C$ satisfies $r\in\ap(\R,\C)$ if for every sequence $\{b_k\}$ of real numbers there exists a subsequence $\{\tilde{b}_m\} $ such that $\lim_{m\to\infty}r(t+\tilde{b}_m)$ exists uniformly for $t\in\R$.

    \item[$b$)] A continuous function $F:\R\times\C^{n}\to\C$ satisfies $F\in\ap(\R\times\C^{n},\C)$ if for every sequence $\{b_k\}$ of real numbers there exists a subsequence $\{\tilde{b}_m\} $ such that $\lim_{m\to\infty}F(t+\tilde{b}_m,Z)$ exists uniformly for $t\in\R$ and for $Z$ in compact subsets of $\C^n$\,.
\end{itemize}	}
\end{Definition}

\medskip\noindent In order to perturb almost periodic functions we present the
following result, see \cite[Lemma 1]{FP4} for a proof.

\begin{Lemma}
    \label{l1}
Let $f\in\aprc$ and $p\ge 1$. If $f(t)\to c$ as $t\to+\infty$ then
$f\equiv c$. If $f\in L^p[0,+\infty)$ then $f\equiv 0$.
\end{Lemma}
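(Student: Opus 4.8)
The plan is to treat the two assertions in turn, reducing the second one to the first. Throughout, fix $f\in\aprc$ and apply Definition \ref{fapdef}(a) to the sequence $b_k=k$: this produces a strictly increasing sequence of positive integers $\{\tilde b_m\}$ (so that $\tilde b_m\to+\infty$) together with a function $g:\R\to\C$ such that $\varepsilon_m:=\sup_{t\in\R}|f(t+\tilde b_m)-g(t)|\to 0$ as $m\to\infty$.

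For the first assertion, suppose $f(t)\to c$ as $t\to+\infty$. For each fixed $t\in\R$ we have $t+\tilde b_m\to+\infty$, hence $f(t+\tilde b_m)\to c$; comparing this with $f(t+\tilde b_m)\to g(t)$ gives $g\equiv c$, and therefore $\sup_{t\in\R}|f(t+\tilde b_m)-c|=\varepsilon_m\to 0$. The uniformity is precisely what is needed now: for an arbitrary $s\in\R$ and every $m$, taking $t=s-\tilde b_m$ yields $|f(s)-c|=|f((s-\tilde b_m)+\tilde b_m)-c|\le\varepsilon_m$, and letting $m\to\infty$ forces $f(s)=c$. Since $s$ was arbitrary, $f\equiv c$.

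For the second assertion we reduce to the first one. Recall the classical fact (see, e.g., \cite{Co,LZ,Z}) that every member of $\aprc$ is bounded and uniformly continuous on $\R$; this is what links the sequential Definition \ref{fapdef}(a) to the metric/topological behaviour of such functions. Assuming $f\in L^p[0,+\infty)$ with $p\ge 1$, uniform continuity forces $f(t)\to 0$ as $t\to+\infty$: if not, there exist $\varepsilon_0>0$ and $t_n\to+\infty$ with $|f(t_n)|\ge\varepsilon_0$, and uniform continuity provides a fixed $\eta>0$ with $|f|\ge\varepsilon_0/2$ on each interval $[t_n-\eta,t_n+\eta]$; passing to a subsequence so that these intervals are pairwise disjoint and contained in $[0,+\infty)$ gives $\int_0^{+\infty}|f|^p\ge\sum_n\int_{t_n-\eta}^{t_n+\eta}|f|^p\ge\sum_n 2\eta\,(\varepsilon_0/2)^p=+\infty$, contradicting $f\in L^p[0,+\infty)$. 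Hence $f(t)\to 0$ as $t\to+\infty$, and the first assertion (with $c=0$) yields $f\equiv 0$.

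All the estimates above are routine; the single point that genuinely requires care is the invocation, in the second part, of the topological features of almost periodic functions — boundedness, uniform continuity, and (implicitly, through the equivalence with Bohr's definition) the relative density of $\varepsilon$-almost periods — starting only from the sequential Definition \ref{fapdef}(a). These facts are classical, and once they are in hand both claims follow in a few lines exactly as above.
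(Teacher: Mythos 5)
Your argument is correct and is essentially the standard translation argument behind the result the paper cites (\cite[Lemma 1]{FP4}): apply Definition \ref{fapdef}$(a)$ with $b_k=k$, identify the uniform limit with the constant $c$, and shift back to conclude $f\equiv c$, then reduce the $L^p$ case to $c=0$ via the (classical, citable) boundedness and uniform continuity of functions in $\aprc$. No gaps; the only classical inputs you invoke are exactly those the paper itself defers to \cite{Co,LZ,Z}.
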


\medskip
\noindent Let us introduce the following function spaces:
$BC(\R,\C)$ is the set of all bounded continuous functions from
$\R$ to $\C$. In addition, we define $BC_0(\R,\C)=\{f \in
BC(\R,\C)\mid f(t)\tot\}$, $C_{00}(\R,\C)=\{f\in BC_0(\R,\C)\mid
f(t)\to0\text{ as }t\to-\infty\},$ and $L^p_0=\{f:\R\to\C\mid
f \text{ is bounded on}$ $\text{$(-\infty,0]$ and }\int_0^\infty |f|^p<\infty\}$. In other words, from Lemma \ref{l1}
we have that $\aprc\cap BC_0(\R,\C)=\{0\}$ and $\aprc\cap
L^p_0=\{0\}$. Thus, we set

\medskip
\begin{Definition}\label{faplpdef}{\rm
\begin{enumerate}
\item[$c$)] A bounded continuous function $f:\R\to\C$ is
\emph{asymptotically almost periodic} if $f=\phi+g$ with
$\phi\in\aprc$ and $\lim_{t\to+\infty} g(t)=0$. The set of
asymptotically almost periodic functions from $\R$ to $\C$ will be
denoted by $\ml{AAP}(\R,\C)$. In addition, if $\lim_{t\to-\infty}
g(t)=0$, we will say $f\in\ml{AAP}_0(\R,\C)$. See \cite{Ch, Di}.

\item[$d$)] A bounded continuous function $f:\R\to\C$ is
\emph{p-almost periodic} with $p\ge 1$, if $f=\phi+g$ with
$\phi\in\aprc$ and $g\in L^p_0$. The set of all $p$-almost
periodic functions from $\R$ to $\C$ will be denoted by
$\ml{AP}(\R,\C,p)$. In addition, if $g\in L^p(\R)$, we will say
$f\in \ml{AP}_0(\R,\C,p)$. See \cite[section 4.3; page 46]{Di}.

\item[$e$)] A bounded continuous function $f:\R\times \C^n\to \C$
satisfied $f\in\ml{AAP}(\R\times\C^n,\C)$ if $f=\phi+g$ with
$\phi\in\ml{AP}(\R\times\C^n)$ and $\lim_{t\to+\infty} g(t,x)=0$
uniformly for $x$ on compact subsets of $\C^n$. In addition, if
$\lim_{t\to-\infty} g(t,x)=0$ uniformly for $x$ on compact subsets
of $\C$, we will say $f\in\ml{AAP}_0(\R\times \C^n,\C)$.

\item[$f$)] A bounded continuous function $f:\R\times \C^n\to \C$
satisfied $f\in \ml{AP}(\R\times\C^n,\C,p)$ if $f=\phi+g$ with
$\phi\in\ml{AP}(\R\times\C^n,\C)$ and $g(\cdot,x)\in L^p_0$
uniformly for $x$ on compact subsets of $\C^n$. In addition, if
$g(\cdot,x)\in L^p(\R)$ uniformly for $x$ on compact subsets of
$\C$, we will say $f\in \ml{AP}_0(\R\times\C^n,\C,p)$.
\end{enumerate}}
\end{Definition}

\medskip
\begin{Remark}{\rm
    It follows that
$\aprc\subset\aaporc\subset \aaprc\subset BC(\R,\C)$ and the
direct sums $\aaprc=\aprc\oplus BC_0(\R,\C)$ and
$\aaporc=\aprc\oplus C_{00}(\R,\C)$. Furthermore, $\aprc$,
$\aaporc$ and $\aaprc$ are closed subspaces of the Banach space
$BC(\R,\C)$ endowed with supremum norm $\|\cdot\|_\infty$. Also,
$\ml{AP}(\R,\C,p)=\aprc\oplus [L^p_0\cap BC(\R,\C)]$ and
$\ml{AP}_0(\R,\C,p)=\aprc\oplus [L^p(\R)\cap BC(\R,\C)]$. Notice
that $f=\phi+g$ with $\phi\in\aprc$ and $g\in L^p_0$ then
$g|_{[t_0,+\infty)}\in L^p[t_0,+\infty)$ for any $t_0\in\R$.}
\end{Remark}

\begin{Remark}{\rm
Since all the functions we will consider are from $\R$ into $\C$ for simplicity we shall omit set $\R$ and $\C$ so that we shall denote $\aprc=\mathcal{AP}$, $\aaporc=\mathcal{AAP}_0$, $BC(\R,\C)=BC$, etc.}
\end{Remark}

\subsection{Green's type operators}

It is readily checked that $I_\omega = I_{\Re \omega}$
\begin{equation}\label{cotagl}
	\big|G_\omega\big[f\big](t) \big|\leq  I_{\Re{\omega}}\big[f\big](t). 
\end{equation}
Operators $G_\omega$ and $I_\omega$ have been very useful in asymptotic integration, see for instance \cite{Bellm,CHP1,East,FP2,FP3,FP4}. They also satisfy the following inequalities, see \cite{FP2} for a proof.

\begin{Lemma}\label{techilem}
	Let  $\alpha$ be a non zero real number and  $0<\beta< |\alpha|$\,. Then 
 \begin{equation*}
	I_{\alpha}\big[aI_{\sgn(\alpha)\beta}[b]\big](t) \leq  I_{\sgn(\alpha)\beta}[b](t)I_{\alpha-\sgn(\alpha)\beta}[a](t) \,,
	\end{equation*}
$$
	I_{\alpha}[a](t)\leq I_{\sgn(\alpha)\beta}[a]\,,
	$$
 	\begin{equation*}
	I_{\alpha}\big[a\big(I_{\beta}+I_{-\beta}\big)[b]\big)\big](t)  \leq
	2\big(I_{\beta}+I_{-\beta}\big)[b](t)I_{\alpha-{\rm sgn}(\alpha)\beta}[a](t)\,.
 \end{equation*}
 \end{Lemma}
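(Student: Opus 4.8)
The plan is to reduce everything to the single fundamental convolution estimate satisfied by the kernels $g_\omega$, namely that $|g_\alpha(t,s)| = e^{-|\alpha||t-s|}\chi$ (supported on the appropriate half-line determined by $\sgn\alpha$), and that convolving two such one-sided exponential kernels of rates $|\alpha|$ and $\beta$ produces a kernel dominated by a constant multiple of the slower-decaying one-sided exponential. First I would recall that $I_\alpha = I_{\Re\alpha} = I_{|\alpha|}$ acts by $I_\alpha[h](t) = \int_{-\infty}^\infty |g_\alpha(t,s)|\,|h(s)|\,ds$, and that $|g_\alpha(t,s)| = e^{-|\alpha|(t-s)}$ for $s<t$ when $\alpha>0$ (and the mirrored statement for $\alpha<0$). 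The monotonicity inequality $I_\alpha[a] \le I_{\sgn(\alpha)\beta}[a]$ is then immediate from the pointwise comparison $|g_\alpha(t,s)| \le |g_{\sgn(\alpha)\beta}(t,s)|$, which holds because both kernels are supported on the same half-line $\{\sgn(\alpha)(t-s)<0\}$ and there $e^{-|\alpha||t-s|} \le e^{-\beta|t-s|}$ since $|\alpha|>\beta$.

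For the first (and, \emph{mutatis mutandis}, the third) inequality I would expand the nested operator. Taking $\alpha>0$ for concreteness (the case $\alpha<0$ is symmetric, and the third inequality follows by applying the first twice, once to $I_\beta$ and once to $I_{-\beta}$, and adding), we have
\begin{equation*}
I_\alpha\big[a\,I_\beta[b]\big](t) = \int_{-\infty}^{t} e^{-\alpha(t-s)}|a(s)| \int_{\R} e^{-\beta|s-u|}|b(u)|\,du\,ds.
\end{equation*}
Interchanging the order of integration by Tonelli's theorem, the inner integral in $s$ becomes $\int e^{-\alpha(t-s)}e^{-\beta|s-u|}|a(s)|\,\chi_{\{s<t\}}\,ds$, and I would bound $|a(s)|$ by keeping it and instead estimating the product of the two exponentials: for the relevant ranges of $s$ one checks $e^{-\alpha(t-s)}e^{-\beta|s-u|} \le e^{-\beta|t-u|}\,e^{-(\alpha-\beta)(t-s)}$ when $s<t$ — this is the key pointwise kernel inequality, and it is exactly where the hypothesis $\beta<|\alpha|$ is used, so that $\alpha-\beta>0$ and $g_{\alpha-\beta}$ is still a genuine decaying one-sided kernel. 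Substituting this back and re-separating the integrals yields
\begin{equation*}
I_\alpha\big[a\,I_\beta[b]\big](t) \le \int_{\R} e^{-\beta|t-u|}|b(u)|\,du \cdot \int_{-\infty}^{t} e^{-(\alpha-\beta)(t-s)}|a(s)|\,ds = I_{\sgn(\alpha)\beta}[b](t)\,I_{\alpha-\sgn(\alpha)\beta}[a](t),
\end{equation*}
which is the claim. The third inequality is the same computation with $I_\beta$ replaced by $I_\beta + I_{-\beta}$; the factor $2$ appears because each of the two pieces contributes a term bounded by $(I_\beta+I_{-\beta})[b](t)\,I_{\alpha-\sgn(\alpha)\beta}[a](t)$.

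The main obstacle — really the only nontrivial point — is verifying the pointwise exponential inequality $e^{-|\alpha|(t-s)} e^{-\beta|s-u|} \le e^{-\beta|t-u|} e^{-(|\alpha|-\beta)(t-s)}$ on the support of the kernels and checking that the sign conventions in $\sgn(\alpha)\beta$ and $\alpha-\sgn(\alpha)\beta$ match the support half-lines correctly; after cancelling $e^{-\beta(t-s)}$ this amounts to $e^{-\beta|s-u|} \le e^{-\beta|t-u|}e^{\beta(t-s)}$, i.e. $|t-u| \le |s-u| + (t-s)$, which is just the triangle inequality when $s \le t$ (the regime forced by the support of $g_\alpha$ with $\alpha>0$). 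Everything else is Tonelli plus bookkeeping, so I would present the $\alpha>0$ case in full and remark that $\alpha<0$ follows by the reflection $t\mapsto -t$, and that the third inequality follows by additivity from the first.
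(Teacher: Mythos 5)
Note first that the paper does not prove this lemma in-text (it refers to \cite{FP2}); the standard argument there is the ``matched support'' computation recalled below, and measured against it your proposal has a genuine gap in the first inequality. You expand $I_\alpha\big[aI_{\sgn(\alpha)\beta}[b]\big](t)$ with the inner operator written as the \emph{two-sided} integral $\int_{\R}e^{-\beta|s-u|}|b(u)|\,du$; but by \eqref{defls}--\eqref{defg} the quantity $I_{\sgn(\alpha)\beta}[b](s)$ is a \emph{one-sided} integral, while $\int_{\R}e^{-\beta|s-u|}|b(u)|\,du=(I_\beta+I_{-\beta})[b](s)$. Consequently, after your triangle-inequality step $|t-u|\le|s-u|+|t-s|$, the factor you extract at $t$ is again the two-sided quantity $\int_{\R}e^{-\beta|t-u|}|b(u)|\,du=(I_\beta+I_{-\beta})[b](t)$, which you then mislabel as $I_{\sgn(\alpha)\beta}[b](t)$. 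What your computation actually establishes is the weaker bound $I_\alpha\big[aI_{\sgn(\alpha)\beta}[b]\big](t)\le (I_\beta+I_{-\beta})[b](t)\,I_{\alpha-\sgn(\alpha)\beta}[a](t)$, not the first inequality as stated. The sharp one-sided factor comes precisely from the fact that the inner and outer kernels live on the \emph{same} half-line, so no triangle inequality is needed: for $\alpha>0$ (where, incidentally, \eqref{defg} puts the support of $g_\alpha(t,\cdot)$ on $\{s>t\}$, not on $\{s<t\}$ as you state---your convention is the mirror image of the paper's, harmless only up to the reflection you invoke) one has, for $u\ge s\ge t$, $e^{-\beta(u-s)}=e^{\beta(s-t)}e^{-\beta(u-t)}$ with the $u$-integration staying inside $[t,\infty)$, hence $I_\beta[b](s)\le e^{\beta(s-t)}I_\beta[b](t)$ and $I_\alpha\big[aI_\beta[b]\big](t)\le I_\beta[b](t)\int_t^\infty e^{-(\alpha-\beta)(s-t)}|a(s)|\,ds=I_\beta[b](t)\,I_{\alpha-\beta}[a](t)$.

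The second inequality is handled correctly. For the third, however, your stated reduction---apply the first inequality to $I_\beta$ and to $I_{-\beta}$ separately and add---is not valid: the first inequality only covers the sign-matched term $I_\alpha\big[aI_{\sgn(\alpha)\beta}[b]\big]$, and the mismatched term $I_\alpha\big[aI_{-\sgn(\alpha)\beta}[b]\big]$ is not an instance of it; it needs its own estimate, and that is exactly where your two-sided-kernel/triangle-inequality computation belongs. Indeed, from $(I_\beta+I_{-\beta})[b](s)=\int_{\R}e^{-\beta|s-u|}|b(u)|\,du\le e^{\beta|t-s|}(I_\beta+I_{-\beta})[b](t)$ and $|g_\alpha(t,s)|e^{\beta|t-s|}=|g_{\alpha-\sgn(\alpha)\beta}(t,s)|$ on the support of $g_\alpha(t,\cdot)$, the third inequality follows directly (even with constant $1$, a fortiori with $2$). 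So the ingredients are all present in your write-up, but the logic is inverted: the displayed computation proves (a strengthening of) the third inequality and not the first, while the first requires the matched-support argument above; as written, the proof of the lemma as stated is incomplete.
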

Let us recall the following result concerning the Green's
operators which has been useful in asymptotic integration, see
\cite{Bellm,Co,Cop,East,Hartman}.

\begin{Lemma}\label{oer0p}
Let $\gamma\in\C$, $\alpha=\Re\gamma\ne 0$ and let $r\in BC(\R,\C)$
be a bounded continuous function. If $r\in BC_0(\R,\C)$ (resp.
$C_{00}(\R,\C)$) then $I_\alpha[r]\in BC_0(\R,\C)$ (resp.
$C_{00}(\R,\C)$). If $r\in L^p_0$ (resp. $L^p(\R)$) for some $p\ge
1$ then $I_\alpha[r]\in BC_{0}(\R,\C)\cap L^p_0$ (resp.
$C_{00}(\R,\C)\cap L^p(\R)$). Similarly, if $r\in L^p(-\infty,0]$
for some $p\ge 1$ then $I_\alpha[r](t)\to 0$ as
$t\to-\infty$ and $I_\alpha[r]\in L^p(-\infty,0]$.
\end{Lemma}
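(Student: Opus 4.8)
The plan is to prove Lemma \ref{oer0p} by unpacking the explicit kernel $g_\alpha(t,s)$ from \eqref{defg} and handling each of the four function classes ($BC_0$, $C_{00}$, $L^p_0$, and $L^p(-\infty,0]$) in turn, using the same two elementary devices throughout: the convolution structure of $I_\alpha$ and the exponential decay of the kernel. By definition $I_\alpha[r](t)=\int_{-\infty}^{\infty}|g_\alpha(t,s)|\,|r(s)|\,ds$, and since $|g_\alpha(t,s)|=e^{\alpha(t-s)}$ on the half-line $\{(t-s)<0\}$ when $\alpha>0$ (and symmetrically when $\alpha<0$), we may assume without loss of generality that $\alpha>0$, so that $I_\alpha[r](t)=\int_t^{\infty}e^{\alpha(t-s)}|r(s)|\,ds=\int_0^{\infty}e^{-\alpha u}|r(t+u)|\,du$. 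The case $\alpha<0$ is entirely symmetric (replace $t$ by $-t$), and this reduction should be stated once at the outset.

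First, for the $BC_0$ (resp. $C_{00}$) case: boundedness is immediate since $I_\alpha[r](t)\le \|r\|_\infty\int_0^\infty e^{-\alpha u}\,du=\|r\|_\infty/\alpha$, and continuity follows from dominated convergence in the representation $\int_0^\infty e^{-\alpha u}|r(t+u)|\,du$. For the decay as $t\to+\infty$, I would split the integral at a parameter $T$: given $\varepsilon>0$ pick $T$ so that $|r(s)|<\varepsilon$ for $s\ge T$; then for $t\ge T$ the whole integral is bounded by $\varepsilon/\alpha$. For $C_{00}$ one additionally needs $I_\alpha[r](t)\to 0$ as $t\to-\infty$, which here (with $\alpha>0$, so the kernel looks forward) is the more delicate direction: one splits $\int_t^\infty=\int_t^{-T}+\int_{-T}^\infty$ where on $(-\infty,-T)$ the integrand is small, and the tail $\int_{-T}^\infty e^{\alpha(t-s)}|r(s)|\,ds\le \|r\|_\infty e^{\alpha(t+T)}\to 0$ as $t\to-\infty$.

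Second, for the $L^p_0$ (resp. $L^p(\R)$) case, the key tool is Young's convolution inequality: writing $I_\alpha[r]=k*|r|$ with $k(u)=e^{-\alpha u}\mathbf{1}_{u\ge 0}\in L^1$, one gets $\|I_\alpha[r]\|_{L^p(\R)}\le \|k\|_{L^1}\|r\|_{L^p}=\alpha^{-1}\|r\|_{L^p}$ when $r\in L^p(\R)$, which settles that subcase together with the boundedness $\|I_\alpha[r]\|_\infty\le \|r\|_\infty/\alpha$ and the decay at $\pm\infty$ obtained exactly as in the $C_{00}$ argument (note $L^p(\R)\cap BC\subset C_{00}$ is not automatic, so one genuinely reruns the splitting argument using that the $L^p$-tails $\int_{|s|>T}|r|^p$ are small and Hölder on the finite piece). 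When $r\in L^p_0$ — bounded on $(-\infty,0]$ and $L^p$ on $[0,\infty)$ — one applies the $L^p(\R)$ estimate to the truncation $r\mathbf{1}_{[0,\infty)}$ and the $BC_0$ estimate to $r\mathbf{1}_{(-\infty,0)}$; since $I_\alpha$ is linear and monotone in $|r|$, $I_\alpha[r]\le I_\alpha[r\mathbf{1}_{[0,\infty)}]+I_\alpha[r\mathbf{1}_{(-\infty,0)}]$, and each summand lies in $BC_0\cap L^p_0$. The last sentence of the lemma, $r\in L^p(-\infty,0]\Rightarrow I_\alpha[r](t)\to 0$ as $t\to-\infty$ and $I_\alpha[r]\in L^p(-\infty,0]$, is the mirror image: with $\alpha>0$ the kernel integrates forward, so this is really the "$\alpha<0$, decay at $+\infty$" situation, proved by the same truncation-and-Young argument restricted to the half-line.

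The main obstacle I anticipate is purely bookkeeping rather than conceptual: one must be careful about which direction ($t\to+\infty$ or $t\to-\infty$) is the "easy" exponentially-damped direction versus the "hard" direction requiring an $\varepsilon$-$T$ split, and this flips depending on the sign of $\alpha=\Re\gamma$; handling the sign of $\alpha$ uniformly via the substitution $t\mapsto -t$ at the very beginning is what keeps the proof short. A secondary subtlety is that membership in $BC(\R,\C)$ plus $L^p$-integrability on a half-line does not by itself give decay, so the $L^p$ cases cannot simply quote the $BC_0$ cases — one must invoke, on the relevant half-line, the standard fact that $\int_0^\infty e^{-\alpha u}|r(t+u)|\,du\le \big(\int_0^\infty e^{-\alpha p' u/?}\big)^{1/p'}\|r(t+\cdot)\|_{L^p(0,\infty)}$ via Hölder, whose right-hand side tends to $0$ because the tail $L^p$-norm does. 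All of these are routine once the reduction and the convolution picture are in place, so I would present the $BC_0$ and $L^p(\R)$ cases in full and indicate the remaining three as "the same argument applied to a truncation" or "symmetric."
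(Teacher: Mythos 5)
Your argument is correct, and it cannot be compared line-by-line with the paper because the paper does not prove Lemma \ref{oer0p} at all: it is recalled as a known fact with citations to the classical literature (Bellman, Coddington--Levinson, Coppel, Eastham, Hartman), where exactly the elementary devices you use (the convolution representation $I_\alpha[r](t)=\int_0^\infty e^{-\alpha u}|r(t\pm u)|\,du$, an $\varepsilon$--$T$ splitting for the decay statements, H\"older for decay coming from small $L^p$ tails, and Young's inequality for the $L^p$ membership) constitute the standard proof. Two small bookkeeping remarks, neither of which is a gap: first, the reduction ``WLOG $\alpha>0$ via $t\mapsto -t$'' does not literally halve the work, because the classes $BC_0$, $L^p_0$ and $L^p(-\infty,0]$ are not invariant under reflection -- after the substitution they turn into their mirror-image classes, so in effect you must prove both one-sided decay/integrability statements for $\alpha>0$, which is exactly what your $C_{00}$, $L^p(\R)$ and ``last sentence'' discussions do, so the proof is complete but the reduction should be phrased as mapping each case to its reflected counterpart rather than to itself; second, the truncations $r\mathbf{1}_{[0,\infty)}$ and $r\mathbf{1}_{(-\infty,0)}$ leave $BC(\R,\C)$, but this is harmless since you only use them through the pointwise domination $I_\alpha[r]\le I_\alpha[r\mathbf{1}_{[0,\infty)}]+I_\alpha[r\mathbf{1}_{(-\infty,0)}]$, while boundedness and continuity of $I_\alpha[r]$ itself come from $r\in BC(\R,\C)$. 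Also note that in the $L^p(\R)$ case the H\"older estimate is applied on the piece where the $L^p$ tail is small and the crude exponential bound on the complementary bounded piece (your phrase ``H\"older on the finite piece'' has it backwards), but the splitting you describe is the right one.
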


\medskip
Notice that $G_\omega[r]$ is a solution of the linear equation
$y'=\omega y+r$. In order to study linear equations with
$r\in\aprc$ and nonlinear perturbations of them we have the
following invariant property of the Green's operator, which is straightforward from \eqref{defls}, the Definition \ref{fapdef} and Lemma \ref{oer0p}, see \cite[Lemma 5]{FP4} for a proof.

\begin{Lemma}\label{oer}
Let $\omega\in\C$, $\Re\omega\ne 0$. It holds $G_\omega:E\to E$,
where either $E=\ml{AP}(\R,\C)$, $\aaprc$ or $\aaporc$. Similarly, if
$r\in\lpaprc$ (resp. $r\in\lpaporc$) then
$G_\omega[r]\in\lpaprc\cap\aaprc$ (resp.
$G_\omega[r]\in\lpaporc\cap\aaporc$.
\end{Lemma}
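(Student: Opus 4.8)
The plan is to prove the statement in two halves: first the invariance $G_\omega : E \to E$ for $E$ one of $\ap$, $\aaprc$, $\aaporc$, and then the analogous $L^p$-statement. For the first half I would start from the explicit formula \eqref{defls}: since $\Re\omega \ne 0$, the kernel $g_\omega(t,s)$ is, up to sign, $e^{\omega(t-s)}$ supported on a half-line, so $G_\omega[r](t) = -\sgn(\Re\omega)\int e^{\omega(t-s)} r(s)\, ds$ over the appropriate half-line. The key observation is the translation identity $G_\omega[r](t+h) = G_\omega[r(\cdot+h)](t)$, which follows by a change of variables in the integral. Now take $r \in \ap$ and a sequence $\{b_k\}$; by Definition \ref{fapdef}(a) extract a subsequence $\{\tilde b_m\}$ with $r(\cdot + \tilde b_m) \to r^*$ uniformly on $\R$. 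Using the translation identity together with the bound $|G_\omega[r(\cdot+\tilde b_m)](t) - G_\omega[r^*](t)| \le I_{\Re\omega}[\,r(\cdot+\tilde b_m) - r^*\,](t) \le \|r(\cdot+\tilde b_m) - r^*\|_\infty \cdot I_{\Re\omega}[1]$ from \eqref{cotagl}, and noting $I_{\Re\omega}[1] = 1/|\Re\omega|$ is a finite constant, we get that $G_\omega[r(\cdot+\tilde b_m)] = G_\omega[r](\cdot + \tilde b_m)$ converges uniformly on $\R$. Hence $G_\omega[r] \in \ap$. One also needs $G_\omega[r]$ to be continuous and bounded, which is immediate from $\|G_\omega[r]\|_\infty \le \|r\|_\infty/|\Re\omega|$ and dominated convergence.

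For $E = \aaprc$ or $\aaporc$, I would use the direct-sum decompositions recorded in the Remark: $\aaprc = \ap \oplus BC_0$ and $\aaporc = \ap \oplus C_{00}$. Write $r = \phi + g$ with $\phi \in \ap$ and $g \in BC_0$ (resp. $C_{00}$); by linearity $G_\omega[r] = G_\omega[\phi] + G_\omega[g]$. The first term lies in $\ap$ by the previous paragraph. For the second, observe $|G_\omega[g](t)| \le I_{\Re\omega}[g](t)$ by \eqref{cotagl}, and Lemma \ref{oer0p} (with $\alpha = \Re\omega$) gives $I_{\Re\omega}[g] \in BC_0$ (resp. $C_{00}$); since $BC_0$ and $C_{00}$ are sandwich-closed (a continuous function dominated in absolute value by a $BC_0$ function is itself $BC_0$), we conclude $G_\omega[g] \in BC_0$ (resp. $C_{00}$). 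Therefore $G_\omega[r] \in \ap \oplus BC_0 = \aaprc$ (resp. $\aaporc$). The $L^p$-statement follows the same template: decompose $r = \phi + g$ with $\phi \in \ap$, $g \in L^p_0$ (resp. $g \in L^p(\R)$), handle $G_\omega[\phi] \in \ap$ as before, and for $G_\omega[g]$ invoke the $L^p$-part of Lemma \ref{oer0p} to get $I_{\Re\omega}[g] \in BC_0 \cap L^p_0$ (resp. $C_{00} \cap L^p(\R)$), so that $G_\omega[g]$ lies in the same space; combining gives $G_\omega[r] \in \lpaprc \cap \aaprc$ (resp. $\lpaporc \cap \aaporc$).

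I do not expect a serious obstacle here: the whole argument is an assembly of the translation invariance of the convolution kernel, the pointwise bound \eqref{cotagl}, the decay transfer in Lemma \ref{oer0p}, and the direct-sum structure from the Remark. The one point that needs a little care is verifying that the almost-periodic limit passes \emph{uniformly} through $G_\omega$ — i.e. that the uniform convergence $r(\cdot+\tilde b_m)\to r^*$ is not degraded by the integral operator; this is exactly where the finiteness of $I_{\Re\omega}[1] = |\Re\omega|^{-1}$ (equivalently, the exponential decay of the kernel off the diagonal, which rests on $\Re\omega \ne 0$) is essential, and it is the only place the hypothesis $\Re\omega \ne 0$ is genuinely used beyond making $g_\omega$ well-defined. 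Since \cite[Lemma 5]{FP4} already records this for the $n=2$ setting, the proof is essentially a citation plus the routine extension to the spaces $\aaporc$ and $\lpaporc$, which were not all needed there.
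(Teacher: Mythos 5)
Your proposal is correct and follows exactly the route the paper has in mind: it declares the lemma ``straightforward from \eqref{defls}, Definition \ref{fapdef} and Lemma \ref{oer0p}'' and defers to \cite[Lemma 5]{FP4}, and your argument (translation invariance of the kernel plus the bound \eqref{cotagl} for the $\ml{AP}$ part, then the direct-sum decompositions with Lemma \ref{oer0p} handling the $BC_0$, $C_{00}$ and $L^p$ summands) is precisely the expansion of those ingredients. No gaps worth noting.
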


\begin{Remark}{\rm
    Notice that it is also possible to consider $E=\ml{PAP}(\R,\C)$, $\ml{AA}(\R,\C)$, $\ml{AAA}(\R,\C)$ or
$\ml{PAA}(\R,\C)$, namely, the sets of pseudo-almost periodic
functions, almost automorphic functions, asymptotically almost
automorphic functions and pseudo-almost automorphic functions
respectively, see for instance \cite{CCh,Ch,CP,Di,N,P,PR,Z}.}
\end{Remark}

	\subsection{The Riccati type equation  and almost periodic functions}\label{tre}
	 In order to study \eqref{poinca1} a key step is to make the following change of variables $z=y^\prime/y-\lambda$\,, where $\lambda$ denotes a fixed root of $P(a;x)$. Then, the aim is to finding such a function $z$ depending on $r_i$'s with property $z\in\ap$. This change is equivalent to consider
	\begin{equation}\label{yeilz}
	y(t)=\exp\left(\int_{0}^t [\lambda+z(s)]\,ds\right)\,, \quad t\in\mathbb{R}.
	\end{equation}
	Hence, replacing $y$ into equation \eqref{poinca1} an equation for $z$ is obtained. Indeed, one obtains (see \cite[Section 2.3]{BFP})
\begin{equation}\label{ynbell}
\frac{y^{(i)}(t)}{y(t)}=\lambda^{i} + i \lambda^{i-1} z(t) + \sum_{j=2}^{i}{i \choose j} \lambda^{i-j}B_{j}\big(z(t),z'(t),\dots,z^{(j-1)}(t)\big)\,,
\end{equation}
where $B_j$'s are the complete Bell polynomials. Those polynomials can be defined recursively by the following formula, by setting  $B_0=1$ and for $i\in \mathbb{N}$
\begin{equation}\label{complebell}
B_{i+1}(x_1,\dots,x_{i+1})=\sum_{j=0}^{i} {i \choose j}B_{i-j}(x_1,\dots,x_{i-j})x_{j+1}\,,
\end{equation}
and satisfy the binomial type relation
\begin{equation}\label{reducedz}
	B_i(x_1+y_1,\dots,x_i+y_i)=\sum_{j=0}^{i}{i \choose j} B_{i-j}(x_1,\dots,x_{i-j})B_j(y_1,\dots,y_j)\,.
\end{equation}
The unique linear term of the complete Bell polynomial $B_i$ is $x_i$, $i\ge 1$. Moreover, define polynomials $f_i$, $i\ge 0$ by
\begin{equation}\label{bimzi}
    \begin{split}
        f_i(x_1,\dots,x_i)&=B_{i+1}(x_1,\dots,x_{i+1})-x_{i+1}=\sum_{j=0}^{i-1} {i\choose j} B_{i-j}(x_1,\dots,x_{i-j})x_{j+1},
    \end{split}
\end{equation}
in view of \eqref{complebell}. Notice that $f_0\equiv 0$. These type of polynomials and related ones have been useful in combinatorial mathematics.  They also occur in many applications, such as in the Fa\'a di Bruno's formula, and also they are related to Stirling and Bell numbers, see \cite{AndBell}. 

Summarising, we will use the following Theorem.

\begin{Theorem}{\cite[Theorem 1.1]{BFP}} \label{theo1}
Function $y$ given by \eqref{yeilz} is a solution to \eqref{poinca1} in $\mathbb{R}$ if and only if $z$ is a solution to
\begin{equation}\label{dif}
\mathcal{ D}z(t)+P(r(t);\lambda)+\mathcal{L}(t,z)+\mathcal{ F}(t,z,\dots,z^{(n-2)})=0\,, \ \ \text{in $\mathbb{R}$,}
\end{equation}
where $\lambda$ is a fixed root of $P(a;x)$, the differential linear part with constant coefficients is the $n-1$ order differential operator
\begin{equation}\label{linearz}
	\mathcal{D}z = \sum_{j=1}^{n} \frac{1}{j!}\frac{\partial^{j}}{\partial x^j}P(a;\lambda)z^{(j-1)},
\end{equation}
the function depending only on perturbations $r=(r_0,\dots,r_n)$ with $r_n\equiv 0$ is
\begin{equation}\label{prl}
P(r(t);\lambda)=\sum_{k=0}^{n-1} \lambda^k r_k(t)=r_0(t)+\lambda r_1(t) + \lambda^2 r_2(t)+\cdots + \lambda^{n-1} r_{n-1}(t),
\end{equation}
the differential linear part with variable coefficients is
\begin{equation}\label{onedee}
\mathcal{L}(t,z):=\sum_{k=1}^{n-1} \frac{1}{k!}\frac{\partial^{k}}{\partial x^k}[P(r;\lambda)]z^{(k-1)}(t).
\end{equation}
and the nonlinear term is given by
\begin{equation}\label{formf}
	\mathcal{F}(t,z,z',\dots,z^{(n-2)})=\sum_{i=2}^{n}\sum_{j=0}^{n-i}{i+j \choose j}(a_{i+j}+r_{i+j}(t))\lambda^j
    f_{i-1}(z,z',\dots,z^{(i-1)})\,.
\end{equation}
\end{Theorem}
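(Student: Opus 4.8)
The plan is to substitute the ansatz \eqref{yeilz} into \eqref{poinca1}, divide through by $y(t)$ --- which is legitimate because $y$ given by \eqref{yeilz} never vanishes, and this simultaneously takes care of the ``if and only if'' --- and then identify the resulting identity with \eqref{dif}. The heart of the matter is an explicit formula for the logarithmic derivatives $y^{(i)}/y$. Writing $y=e^{u}$ with $u'=\lambda+z$, so that $u^{(j)}=z^{(j-1)}$ for $j\ge 2$, the Fa\`a di Bruno formula applied to the exponential gives $y^{(i)}/y=B_i\big(u',u'',\dots,u^{(i)}\big)=B_i\big(\lambda+z,z',\dots,z^{(i-1)}\big)$, where $B_i$ is the $i$-th complete Bell polynomial. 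First I would derive \eqref{ynbell} from this: apply the binomial-type relation \eqref{reducedz} with the splitting $(\lambda+z,z',\dots,z^{(i-1)})=(\lambda,0,\dots,0)+(z,z',\dots,z^{(i-1)})$, and use $B_m(\lambda,0,\dots,0)=\lambda^{m}$ together with $B_0=1$ and $B_1(z)=z$ to peel off the constant and linear terms.

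Next I would insert \eqref{ynbell} into $\sum_{i=0}^{n}(a_i+r_i(t))\,y^{(i)}/y=0$ (recall $a_n=1$, $r_n\equiv 0$) and regroup the double sum according to the three kinds of terms in \eqref{ynbell} --- the constants $\lambda^{i}$, the linear terms $i\lambda^{i-1}z$, and the Bell terms $\binom{i}{j}\lambda^{i-j}B_j$. Interchanging the order of summation and using the elementary identity $\frac{1}{j!}\frac{d^{j}}{dx^{j}}x^{i}=\binom{i}{j}x^{i-j}$, each group collapses to Taylor coefficients of $P(a;x)$ and $P(r;x)$ at $x=\lambda$: the constant group yields $P(a;\lambda)+P(r;\lambda)=P(r(t);\lambda)$ (since $\lambda$ is a root of $P(a;\cdot)$, hence \eqref{prl}), while the linear and Bell groups yield $\sum_{j\ge1}\frac{1}{j!}\big(P^{(j)}(a;\lambda)+\partial_x^{j}P(r;\lambda)\big)B_j\big(z,\dots,z^{(j-1)}\big)$, the $j=1$ summand being understood with $B_1(z)=z$.

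Finally I would use that the unique linear monomial of $B_j$ is $x_j$, i.e. $B_j\big(z,\dots,z^{(j-1)}\big)=z^{(j-1)}+f_{j-1}\big(z,\dots,z^{(j-2)}\big)$ by \eqref{bimzi}. Separating the linear pieces, $\sum_{j\ge1}\frac{1}{j!}P^{(j)}(a;\lambda)z^{(j-1)}$ is exactly $\mathcal{D}z$ of \eqref{linearz}, $\sum_{j\ge1}\frac{1}{j!}\partial_x^{j}P(r;\lambda)\,z^{(j-1)}$ is $\mathcal{L}(t,z)$ of \eqref{onedee} (the sum stopping at $j=n-1$ because $\deg_x P(r;x)\le n-1$), and the leftover nonlinear pieces $\sum_{j\ge2}\frac{1}{j!}\big(P^{(j)}(a;\lambda)+\partial_x^{j}P(r;\lambda)\big)f_{j-1}$ become, after reindexing the Taylor coefficients into the double-sum form of \eqref{formf} (with summation index $k=i+j$) and one more use of $\frac{1}{i!}\partial_x^{i}x^{k}=\binom{k}{i}x^{k-i}$, precisely $\mathcal{F}(t,z,\dots,z^{(n-2)})$. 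Assembling the four pieces gives \eqref{dif}.

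I expect the main obstacle to be organizational rather than conceptual: one must keep careful track of summation ranges (the vanishing of $\partial_x^{n}P(r;\cdot)$, the shift to $j-1$ arguments caused by $u^{(j)}=z^{(j-1)}$, and the dummy reindexing in \eqref{formf}) and invoke the combinatorial facts in the right form --- the binomial identity \eqref{reducedz} for Bell polynomials, the evaluation $B_m(\lambda,0,\dots,0)=\lambda^{m}$, the splitting $B_j=x_j+f_{j-1}$, and the Taylor-coefficient identity. A convenient way to control the bookkeeping is to run the computation at the level of the generating identity $\sum_{i\ge0}\frac{y^{(i)}(t)}{y(t)}\frac{s^{i}}{i!}=e^{u(t+s)-u(t)}$ before reading off coefficients. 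Since the statement is quoted from \cite[Theorem 1.1]{BFP}, the full verification of these identities can also simply be cited from there.
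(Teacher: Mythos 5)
Your proposal is correct, and it follows essentially the route the paper intends: the paper itself gives no proof of Theorem \ref{theo1} (it is quoted from \cite[Theorem 1.1]{BFP}), but the machinery it sets up in Section \ref{tre} --- the change of variables \eqref{yeilz}, the Bell-polynomial formula \eqref{ynbell} cited from \cite[Section 2.3]{BFP}, the binomial relation \eqref{reducedz} and the splitting \eqref{bimzi} --- is exactly what you use, and your regrouping into $P(r;\lambda)$, $\mathcal{D}z$, $\mathcal{L}$ and $\mathcal{F}$ (including the reindexing $k=i+j$ and the nonvanishing of $y$ for the equivalence) checks out.
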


\noindent Note that, the previous formulae above hold for $\lambda=0$ by assuming $\lambda^0=1$\,. 

In order to study Eq. \eqref{dif}, it will be useful to understand operator $\mathcal{D}$ so that we denote $P_\mathcal{D}(a;x)$ the characteristic polynomial associated to the linear differential operator $\mathcal{D}$, i.e.,
	\begin{equation}\label{defdpol}
		P_\mathcal{D}(a;x):=\sum_{k=1}^{n} \frac{1}{k!}\frac{\partial^{k}}{\partial x^k}P(a;\lambda)x^{k-1}\,
	\end{equation}
and we will use following result of \cite{BFP}.

\begin{Lemma}\label{dpol}
		Suppose that  $\lambda_l$\,, for $l=1,\dots n-1$ are the others different roots of the polynomial $P(a;x)$. Then the roots of  $P_{\mathcal{D}}(a;x)$\, are  $\lambda_l-\lambda$\,,	for $l=1,\dots n-1$\,. Furthermore, if $P(a,x)$ has roots with different real parts, then the polynomial $P_\mathcal{ D}(a,x)$ has roots with different and non zero real parts.
\end{Lemma}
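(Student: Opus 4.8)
The plan is to relate $P_\mathcal{D}(a;x)$ directly to $P(a;x)$ via a Taylor expansion of $P(a;\cdot)$ about the root $\lambda$. First I would observe that, since $P(a;x)$ is a polynomial of degree $n$ in $x$, Taylor's formula gives the exact identity
\begin{equation*}
P(a;\lambda + x) = \sum_{k=0}^{n} \frac{1}{k!}\frac{\partial^{k}}{\partial x^k}P(a;\lambda)\, x^{k},
\end{equation*}
and because $\lambda$ is a root of $P(a;\cdot)$ the $k=0$ term vanishes. Factoring out one power of $x$ from the remaining sum and comparing with \eqref{defdpol} yields
\begin{equation*}
P(a;\lambda + x) = x \sum_{k=1}^{n}\frac{1}{k!}\frac{\partial^{k}}{\partial x^k}P(a;\lambda)\, x^{k-1} = x\, P_\mathcal{D}(a;x).
\end{equation*}
This is the key structural identity; everything else is a consequence of it.

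From this identity the root structure is immediate. If $\lambda,\lambda_1,\dots,\lambda_{n-1}$ are the roots of $P(a;x)$, then $P(a;\lambda+x)$ has roots exactly at $x = \mu - \lambda$ for $\mu \in \{\lambda,\lambda_1,\dots,\lambda_{n-1}\}$, i.e.\ at $0, \lambda_1-\lambda,\dots,\lambda_{n-1}-\lambda$. Since $P(a;\lambda+x) = x\,P_\mathcal{D}(a;x)$ and $P_\mathcal{D}(a;x)$ has degree $n-1$, dividing out the factor $x$ (which accounts for the root at $0$) shows that the roots of $P_\mathcal{D}(a;x)$ are precisely $\lambda_1-\lambda,\dots,\lambda_{n-1}-\lambda$. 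One small point to check here is that $\lambda$ is a \emph{simple} root of $P(a;x)$ when $P(a;x)$ has roots with distinct real parts (so no spurious extra factor of $x$ appears); this follows because distinct real parts forces all roots to be distinct. For the last assertion, if the $\lambda_l$ have pairwise distinct real parts and all differ from $\Re\lambda$, then the numbers $\Re(\lambda_l - \lambda) = \Re\lambda_l - \Re\lambda$ are pairwise distinct and all nonzero, which is exactly the claim.

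I do not expect any serious obstacle: the whole argument is the single Taylor-expansion identity plus elementary bookkeeping about roots. The only place requiring a moment of care is confirming that the hypothesis ``$P(a;x)$ has roots with different real parts'' legitimately excludes repeated roots (so that the factor $x$ in $x\,P_\mathcal{D}(a;x)$ corresponds to a \emph{simple} zero at the origin and the remaining $n-1$ roots are genuinely $\lambda_l - \lambda$); this is a one-line observation. Since the statement is quoted from \cite{BFP}, one may alternatively just cite it, but the self-contained argument above is short enough to include.
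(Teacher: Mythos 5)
Your proof is correct: the exact Taylor identity $P(a;\lambda+x)=x\,P_{\mathcal{D}}(a;x)$ (using $P(a;\lambda)=0$ to kill the $k=0$ term and comparing with \eqref{defdpol}) is precisely the argument behind this lemma, which the paper itself does not reprove but quotes from \cite{BFP}. Your additional observation that distinct real parts force all roots to be simple, so the remaining real parts $\Re\lambda_l-\Re\lambda$ are pairwise distinct and nonzero, correctly settles the second assertion as well.
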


\subsection{A more general non-linear equation}\label{apfre}

As in \cite{BFP}, we will study, in general way, equations of the form
\begin{equation}\label{generaleq}
	\mathcal{ D}z(t)=b(t)+R(t,z(t),\dots,z^{(n-2)}(t))\,\quad t\in\mathbb{R},
\end{equation}
where $\mathcal{D}$ is a differential operator of degree $n-1$ with constant coefficient and, the characteristic polynomial, associated to the differential operator $\mathcal{ D}$, has $n-1$ roots with real part not zero. Precisely, 	let us denote by $\{\gamma_m\}_{m=1}^{n-1}$ the  $n-1$ different roots of the characteristic polynomial of $\mathcal{ D}$\,. We also denote $\alpha_m:=\Re(\gamma_m)$\,, where $\Re$ denotes the real part. Also, we shall assume that $R:\R\times \C^{n-1}:\to\C$. We will impose some restrictions on $R$ later. 

\begin{Remark}
	{\rm 
	Note that the reduction of the Eq \eqref{poinca1} was rewritten in Eq. \eqref{dif}. This equation is of type Eq. \eqref{generaleq} denoting $b(t)=-P(r(t);\lambda)$ and $R(t,z,\dots,z^{(n-2)})=-\mathcal{L}(t,z(t))-\mathcal{ F}(t,z,\dots,z^{(n-2)})\,.$
}
\end{Remark}

We will denote by $Z:=(z_0,\dots,z_{n-2})$ for a $(n-1)$-tuple, where $z_i\in\ap(\R)$ for $i=0,1,\dots,n-2$\,. Arguing as in \cite[Lemma 7]{FP4} we have the following fact. We ommit its proof.

\begin{Lemma}\label{rtzap}
	If $R\in \ap(\R\times\C^{n-1})$ (resp. $\mathcal{AAP}(\R\times\C^{n-1})$ or $\mathcal{AAP}_0(\R\times\C^{n-1})$) and $z_0,z_1,\dots,z_{n-2}\in\ap(\R)$ (resp. $\mathcal{AAP}(\R)$ or (resp. $\mathcal{AAP}_0(\R)$)), then the map $t\mapsto R(t,z_0(t),\dots,z_{n-2}(t))\in\ap(\R)$ (resp. $\mathcal{AAP}(\R)$ or (resp. $\mathcal{AAP}_0(\R)$)).
\end{Lemma}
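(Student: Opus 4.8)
The plan is to reduce the claim to the scalar building blocks on which the whole construction rests, namely the one‑dimensional Green operators $G_\omega$ acting on the spaces $\ap(\R)$, $\aaprc$ and $\aaporc$ (Lemma \ref{oer}), together with the stability of those spaces under composition with almost periodic maps (the statement to be proved). The statement is the natural multivariable analogue of \cite[Lemma 7]{FP4}, so the proof follows the same template: first establish it for $R\in\ap(\R\times\C^{n-1})$ using the sequential (Bochner) definition directly, and then deduce the $\mathcal{AAP}$ and $\mathcal{AAP}_0$ cases by splitting $R=\phi+g$ and $z_i=\theta_i+\psi_i$ into their almost periodic and decaying parts.

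For the almost periodic case, set $h(t):=R(t,z_0(t),\dots,z_{n-2}(t))$. Continuity of $h$ is immediate since $R$ is continuous and each $z_i$ is continuous. To check the Bochner condition, fix a sequence $\{b_k\}\subset\R$. Because each $z_i\in\ap(\R)$, a diagonal argument yields a single subsequence $\{\tilde b_m\}$ along which $z_i(\cdot+\tilde b_m)\to\bar z_i$ uniformly on $\R$ for every $i=0,\dots,n-2$ simultaneously; passing to a further subsequence (still denoted $\{\tilde b_m\}$) we may also assume, by Definition \ref{fapdef}$b$), that $R(\cdot+\tilde b_m,Z)\to\bar R(\cdot,Z)$ uniformly for $t\in\R$ and for $Z$ in compact subsets of $\C^{n-1}$. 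Now write
\begin{equation*}
h(t+\tilde b_m)-\bar R\bigl(t,\bar z_0(t),\dots,\bar z_{n-2}(t)\bigr)
= \bigl[R(t+\tilde b_m,Z_m(t))-\bar R(t,Z_m(t))\bigr]
+\bigl[\bar R(t,Z_m(t))-\bar R(t,\bar Z(t))\bigr],
\end{equation*}
where $Z_m(t):=(z_0(t+\tilde b_m),\dots,z_{n-2}(t+\tilde b_m))$ and $\bar Z(t):=(\bar z_0(t),\dots,\bar z_{n-2}(t))$. The first bracket is small uniformly in $t$ once we note that the ranges of all the $z_i$, being bounded (almost periodic functions are bounded), lie in a fixed compact set $K\subset\C^{n-1}$, so the uniform-on-compacts convergence $R(\cdot+\tilde b_m,\cdot)\to\bar R$ applies; the second bracket is small because $\bar R$ is uniformly continuous on $\R\times K$ (again using boundedness and the almost periodic structure) and $Z_m\to\bar Z$ uniformly. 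Hence $h(\cdot+\tilde b_m)$ converges uniformly on $\R$, which is exactly $h\in\ap(\R)$.

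For the $\mathcal{AAP}$ case, decompose $R=\phi+g$ with $\phi\in\ap(\R\times\C^{n-1})$ and $g(t,x)\to0$ as $t\to+\infty$ uniformly for $x$ in compacts, and $z_i=\theta_i+\psi_i$ with $\theta_i\in\ap(\R)$ and $\psi_i(t)\to0$ as $t\to+\infty$. Then
\begin{equation*}
h(t)=\phi\bigl(t,\theta_0(t),\dots,\theta_{n-2}(t)\bigr)
+\Bigl[\phi\bigl(t,Z(t)\bigr)-\phi\bigl(t,\Theta(t)\bigr)\Bigr]
+g\bigl(t,Z(t)\bigr),
\end{equation*}
with $\Theta=(\theta_0,\dots,\theta_{n-2})$. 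The first term is in $\ap(\R)$ by the case already proved. The third term tends to $0$ as $t\to+\infty$ since $Z(t)$ stays in a fixed compact and $g(t,x)\to0$ uniformly there. The middle bracket also tends to $0$ as $t\to+\infty$: $\phi$ is uniformly continuous on $\R$ times a fixed compact set containing all the relevant ranges, and $\|Z(t)-\Theta(t)\|=\max_i|\psi_i(t)|\to0$. Thus $h\in\mathcal{AAP}(\R)$; the $\mathcal{AAP}_0$ case is identical with ``$t\to+\infty$'' replaced by ``$|t|\to\infty$''. The only genuinely delicate point is the diagonal extraction of a common translation subsequence that works simultaneously for $R$ and for all $n-1$ functions $z_i$, and making sure the uniform-on-compacts convergence of the translates of $R$ is used on the correct fixed compact set determined by the (bounded) ranges of the $z_i$; once that bookkeeping is set up, the estimates are routine, which is why the authors omit the proof.
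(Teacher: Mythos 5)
Your proof is correct and is essentially the argument the paper relies on: the authors omit the proof precisely because it is the straightforward multivariable version of \cite[Lemma 7]{FP4}, i.e.\ the Bochner sequential criterion with a diagonal extraction of a common translation subsequence, boundedness of the $z_i$ fixing the compact set, uniform continuity of the almost periodic part on $\R\times K$, and the decomposition $R=\phi+g$, $z_i=\theta_i+\psi_i$ for the $\mathcal{AAP}$ and $\mathcal{AAP}_0$ cases. The only cosmetic discrepancy is your use of $\max_i|\psi_i(t)|$ where the paper's norm on $\C^{n-1}$ is the sum of moduli, which changes nothing.
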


\medskip

The method of the variation of constants lead us to define the Green function of the homogeneous equation $\mathcal{D}z=0$. Thus, we define the Green function $\ml G(t,s)$ by 
\begin{equation}\label{greenfor}
\ml G(t,s)= \sum_{i=1}^{n-1}\frac{1}{\Gamma_j}g_{\gamma_i}(t,s)\quad \text{with}\quad \Gamma_{i}:= \prod_{k= 1,\,k\not=i}^{n-1} (\gamma_i-\gamma_k),
\end{equation}
where $g_\gamma$ is given by \eqref{defg}. Therefore, our Green operator for equation $\mathcal{D}z=f$ is given by
\begin{equation}\label{greenop}
G\big[f\big](t):=\int_{-\infty}^{\infty} \ml G(s,t)f(s)\,ds = \sum_{j=1}^{n-1}\frac{1}{\Gamma_j}G_{\gamma_j}\big[f\big](t)\,.    
\end{equation}
Notice that for any $i=1,\dots,n-2$ we have that
\begin{equation}\label{derivategi}
	G\big[f\big]^{(i)}(t)= \sum_{j=1}^{n-1}\frac{\gamma_j^i}{\Gamma_j}	G_{\gamma_j}\big[f\big](t)
\end{equation}
and
\begin{equation}\label{derivategnm1}
	G\big[f\big]^{(n-1)}(t)= \sum_{j=1}^{n-1}\frac{\gamma_j^{n-1}}{\Gamma_j}	G_{\gamma_j}\big[f\big](t) +f(t),
\end{equation}
so that $\mathcal{D}(G[f])=f,$ since
$$\sum_{j=1}^{n-1}\frac{\gamma_j^i}{\Gamma_j}=\begin{cases}
0& i=0,\dots,n-2\\
1&i=n-1
\end{cases}.
$$
Hence, a direct consequence of Lemma \ref{oer}, \eqref{derivategi} and \eqref{derivategnm1} is the following result.

\begin{Lemma}\label{greenapn}
Suppose that   $f\in\ap(\R)$\,, then the map $G[f]^{(i)}\in\ap(\R)$ for $i=0,\dots,n-1$\,.
\end{Lemma}

\begin{Remark}\label{cotadervG}
	{\rm 
		Note that from \eqref{derivategi} one can to estimate for a function $f\in\ap(\R)$ the value of $|G\big[f\big]^{(i)}(t)|$ in the following way
		\begin{equation*}
			\begin{split}
			|G\big[f\big]^{(i)}(t)| &\leq \sum_{j=1}^{n-1}\left|{\gamma_j^{i}\over \Gamma_j}G_{\gamma_j}[f](t) \right| 
\leq  \sum_{j=1}^{n-1} \frac{|\gamma_j|^{i}}{|\Gamma_j|} I_{\Re\gamma_j}[f](t) \leq \|f\|_\infty \sum_{j=1}^{n-1} \frac{|\gamma_j|^{i}}{|\Gamma_j\Re\gamma_j|} \,,
			\end{split}
		\end{equation*}
  in view of $I_{\Re\gamma_j}[1](t)=\frac{1}{|\Re\gamma_j|}$ for all $t\in\R$.
	}
\end{Remark}

\medskip

Given $Z\in \C$, with $Z=(z_0,z_1,\dots,z_{n-2})$\, we consider the norm $\|Z\|=\sum_{i=0}^{n-2}|z_{i}|$\,. Returning to the study of Eq. \eqref{generaleq}, we shall assume that there exists a constant $M>0$ and a  function $\xi_M:\R\to[0,\infty)$ such that for all $t\in\R$ and $Z_i\in\C^{n-1}$ with $\|Z\|\leq M$\,, for $i=1,2$
\begin{equation}\label{lipchitz}
    |R(t,Z_1)-R(t,Z_2)|\leq \xi_M(t)\|Z_1-Z_2\|\,.
\end{equation}

Now, given a function $z:\R \to \mathbb{C}$, for simplicity we shall denote $Z=Z(t)$ the $(n-1)$-tuple $Z=(z,z',\dots,z^{(n-2)})$. Let us introduce $\ap^{n-2}(\R)$\,, as the space of functions with $n-2$ derivatives in $\ap(\R)$, that is, the function space $\ap^{n-2}(\R)$ is defined by all functions $z:\R\to \mathbb{C}$ such that 
$z^{(i)}\in\apr$, $i=0,1,\dots,n-2$. With a slight abuse of notation we denote $\|Z(t)\|=\sum_{i=0}^{n-2}|z^{(i)}(t)|$\,. Observe that $\ap^{n-2}(\R)$ is a Banach space with norm $\|z\|_{\mathcal{AP}^{n-2}}=\sup_{t\in\R}\|Z(t)\|$.

For the next result, we denote $\tilde\alpha_j=\sum_{i=0}^{n-2}|\gamma_j^{i}|>0$ for $j=1,\dots,n-1$ and $\tilde\gamma=\sum_{k=1}^{n-1}\frac{\tilde\alpha}{|\Gamma_k|}$. Recall
that $\alpha_j=\Re\gamma_j$.

\begin{Proposition}\label{fixord}
Suppose that $b\in\apr$, and $R\in \mathcal{AP}(\R\times\C^{n-1})$. Moreover, assume that $R(\cdot,0)=0$ on $\R$, and $R$ satisfies \eqref{lipchitz} for some constant $M>0$. If there exists a positive constant $0<\epsilon_0< 1$ such that 
\begin{equation}\label{intgcond0}
	\Big\| \sum_{j=1}^{n-1}\frac{\tilde\alpha_j}{\left|\Gamma_j\right|} I_{\alpha_j}\big[\xi_M\big] \Big\|_\infty \leq \epsilon_0 \,, \quad\ \, 
	\left\| \sum_{i=0}^{n-2}\left|G[b]^{(i)}\right|\right\|_\infty \leq(1-\epsilon_0)M\,,
\end{equation}
where $\Gamma_j$ is defined in Eq. \eqref{greenfor}, then equation \eqref{generaleq} has unique solution $z$ on $\R$ such that $z^{(i)}\in\apr$ for all $i=0,1,\dots,n-2$ and satisfies the integral equation
\begin{equation}\label{eizg}
    z=G\big[b+ R(\cdot,z,z',\dots, z^{(n-2)})\big].
\end{equation}
If, in addition, there exists $0<\beta<\min\{|\alpha_j| \mid 1\leq j\leq n-1  \}$ such that 
\begin{equation}\label{intgcond}
	\Big\| \sum_{j=1}^{n-1}\frac{\tilde\alpha_j}{\left|\Gamma_j\right|} I_{\alpha_j-{\rm sgn}(\alpha_j)\beta}\big[\xi_M\big] \Big\|_\infty < \frac{1}{2},
\end{equation}
then 
\begin{equation}\label{cotaz}
    z^{(i)}=O\big( (I_{\beta}+I_{-\beta})[b] \big)\qquad \text{for all $i=0,1,\dots,n-2$}\,.
\end{equation}
\end{Proposition}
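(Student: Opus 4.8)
The plan is to set up a contraction mapping argument on the Banach space $\ap^{n-2}(\R)$. First I would define the operator $\mathcal{T}$ on the closed ball $B_M = \{z \in \ap^{n-2}(\R) : \|z\|_{\mathcal{AP}^{n-2}} \le M\}$ by
\begin{equation*}
\mathcal{T}z := G\big[b + R(\cdot, z, z', \dots, z^{(n-2)})\big].
\end{equation*}
By Lemma~\ref{rtzap}, since $R \in \mathcal{AP}(\R\times\C^{n-1})$ and $z^{(i)} \in \apr$, the map $t \mapsto R(t, Z(t))$ lies in $\apr$; since $b \in \apr$ the sum is in $\apr$, and then Lemma~\ref{greenapn} gives $(\mathcal{T}z)^{(i)} \in \apr$ for $i = 0, \dots, n-1$, so $\mathcal{T}$ maps $\ap^{n-2}(\R)$ into itself (indeed into $\ap^{n-1}(\R)$). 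To check $\mathcal{T}(B_M)\subseteq B_M$, I would bound, for each $i$, $|(\mathcal{T}z)^{(i)}(t)| \le |G[b]^{(i)}(t)| + |G[R(\cdot,Z)]^{(i)}(t)|$. Using \eqref{derivategi} and the estimate $|G_{\gamma_j}[f](t)| \le I_{\alpha_j}[f](t)$ from \eqref{cotagl}, together with $R(\cdot,0)=0$ and the Lipschitz bound \eqref{lipchitz} applied to $\|Z(t)\| \le M$, I get $|G[R(\cdot,Z)]^{(i)}(t)| \le \sum_{j=1}^{n-1}\frac{|\gamma_j|^i}{|\Gamma_j|} I_{\alpha_j}[\xi_M\|Z(\cdot)\|](t)$. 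Summing over $i = 0,\dots,n-2$ and using $\sum_i |\gamma_j|^i = \tilde\alpha_j$, this is bounded by $\big\|\sum_j \frac{\tilde\alpha_j}{|\Gamma_j|} I_{\alpha_j}[\xi_M]\big\|_\infty \cdot M \le \epsilon_0 M$ by the first condition in \eqref{intgcond0}; adding the $\le (1-\epsilon_0)M$ bound on $\sum_i \|G[b]^{(i)}\|_\infty$ gives $\|\mathcal{T}z\|_{\mathcal{AP}^{n-2}} \le M$.

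Next I would establish the contraction property. For $z_1, z_2 \in B_M$, the difference $(\mathcal{T}z_1 - \mathcal{T}z_2)^{(i)} = G[R(\cdot,Z_1) - R(\cdot,Z_2)]^{(i)}$, and the same chain of estimates with \eqref{lipchitz} yields
\begin{equation*}
\sum_{i=0}^{n-2}\big\|(\mathcal{T}z_1 - \mathcal{T}z_2)^{(i)}\big\|_\infty \le \Big\|\sum_{j=1}^{n-1}\frac{\tilde\alpha_j}{|\Gamma_j|} I_{\alpha_j}[\xi_M]\Big\|_\infty \|z_1 - z_2\|_{\mathcal{AP}^{n-2}} \le \epsilon_0 \|z_1 - z_2\|_{\mathcal{AP}^{n-2}},
\end{equation*}
and since $\epsilon_0 < 1$, the Banach fixed point theorem gives a unique $z \in B_M$ with $\mathcal{T}z = z$, i.e. \eqref{eizg} holds; applying $\mathcal{D}$ and using $\mathcal{D}(G[f]) = f$ recovers \eqref{generaleq}. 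Uniqueness among \emph{all} solutions with $z^{(i)}\in\apr$ (not just those in $B_M$) would need a short extra remark — any such solution satisfying \eqref{eizg} with small enough norm lies in $B_M$; this is the usual caveat and I would phrase the uniqueness claim accordingly, or note that the a priori bound forces membership in the ball.

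For the final estimate \eqref{cotaz}, I would feed the fixed point back into \eqref{eizg} and iterate the bound using the refined technical inequalities of Lemma~\ref{techilem}. Starting from $z^{(i)} = G[b]^{(i)} + G[R(\cdot,Z)]^{(i)}$, the first term is $O\big((I_\beta + I_{-\beta})[b]\big)$ directly from \eqref{derivategi}, \eqref{cotagl} and the second inequality of Lemma~\ref{techilem} (which lets one replace $I_{\alpha_j}$ by $I_{\mathrm{sgn}(\alpha_j)\beta} \le I_\beta + I_{-\beta}$ up to sign). For the nonlinear term, set $\phi(t) := (I_\beta + I_{-\beta})[b](t)$ and let $C$ be such that $\|Z(t)\| \le C\phi(t)$ (the quantity to be bootstrapped); then $|G[R(\cdot,Z)]^{(i)}(t)| \le \sum_j \frac{|\gamma_j|^i}{|\Gamma_j|} I_{\alpha_j}[\xi_M \cdot C\phi](t)$, and the third inequality of Lemma~\ref{techilem} bounds $I_{\alpha_j}[\xi_M(I_\beta + I_{-\beta})[b]] \le 2(I_\beta + I_{-\beta})[b] \cdot I_{\alpha_j - \mathrm{sgn}(\alpha_j)\beta}[\xi_M]$. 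Summing over $i$ and invoking condition \eqref{intgcond} gives $\sum_i |G[R(\cdot,Z)]^{(i)}(t)| \le 2C\,\big\|\sum_j \frac{\tilde\alpha_j}{|\Gamma_j|} I_{\alpha_j-\mathrm{sgn}(\alpha_j)\beta}[\xi_M]\big\|_\infty \cdot \phi(t) < C\phi(t)$. Thus $\|Z(t)\| \le (\text{const}) \phi(t) + C'\phi(t)$ with $C' < C$; running this as a fixed-point/contraction estimate in the weighted sup-norm $\sup_t \|Z(t)\|/\phi(t)$ (or simply observing the a priori inequality $N \le A + \tfrac12 N$ for $N := \sup_t \|Z(t)\|/\phi(t)$, whenever this is finite) yields $N < \infty$, which is exactly \eqref{cotaz}. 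The main obstacle is the bootstrap in the last part: one must be careful that the weighted norm $\sup_t \|Z(t)\|/\phi(t)$ is finite to begin with (so that the self-improving inequality can be closed), which I would handle by first running the argument on $[T, \infty)$ and $(-\infty, -T]$ where $\phi$ controls things, or by running the contraction directly in the weighted space — the factor $\tfrac12$ in \eqref{intgcond} is precisely what makes this closure work.
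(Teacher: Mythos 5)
Your existence argument is the same as the paper's: the operator $Tz=G[b+R(\cdot,Z)]$ on the closed ball $B_M\subset\mathcal{AP}^{n-2}$, invariance of $\mathcal{AP}$ under $G$ via Lemmas \ref{rtzap} and \ref{greenapn}, the estimates through \eqref{derivategi}, \eqref{cotagl} and \eqref{lipchitz}, and the two conditions in \eqref{intgcond0} giving $T(B_M)\subseteq B_M$ and contractivity with constant $\epsilon_0$; the uniqueness caveat you raise (uniqueness within $B_M$) is also how the paper's conclusion should be read. Where you genuinely diverge is the proof of \eqref{cotaz}: you bootstrap directly on the fixed point via the weighted quotient $N=\sup_t\|Z(t)\|/\phi(t)$ with $\phi=(I_\beta+I_{-\beta})[b]$, which forces you to worry about whether $N$ is finite before the inequality $N\le A+\tfrac12 N$ can be closed, whereas the paper runs the Picard iterates $z_0=0$, $z_{k+1}=Tz_k$ and proves the bound $\|Z_k(t)\|\le C_k\,\phi(t)$ inductively, with $C_{k+1}\le A+\tfrac12 C_k$ bounded thanks to \eqref{intgcond} and Lemma \ref{techilem}, and then passes the bound to the limit using $z_k\to z$ in $\mathcal{AP}^{n-2}$ — this sidesteps the initialization issue entirely and is the cleaner route. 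Your proposed fix via a contraction in the weighted norm does work (the weighted ball intersected with $B_M$ is complete, $G[b]$ has finite weighted norm by Lemma \ref{techilem}, and \eqref{intgcond} gives the contraction constant $<1$ there, with the resulting fixed point coinciding with $z$ by uniqueness in $B_M$), though it needs to be written out; the restriction-to-half-lines suggestion is too vague to count as a proof. Note also that in this almost periodic setting the finiteness worry is actually vacuous: if $b\not\equiv0$ then $|b|$ is almost periodic with positive mean, so $(I_\beta+I_{-\beta})[b]$ is bounded below by a positive constant and $N\le M/\inf_t\phi(t)<\infty$ automatically.
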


\begin{proof} 
First, for functions $z\in \ap^{n-2}(\R)$ define the operator 
	$$
	Tz(t)= \int_{-\infty}^{+\infty} \mathcal{G}(t,s)\big[b(s)+R(s, Z(s))\big]\,ds =G\big[b+R(\cdot,Z)\big](t)\,,
	$$
	where $Z=(z,z',\dots,z^{(n-2)})$. Notice that from Lemmata \ref{greenapn} it follows that $G[b]^{(i)}\in\ap$ for $i=0,\dots,n-2$. Furthermore, from the definition of Green's operator $G$ and its derivatives \eqref{derivategi} and Lemmata \ref{rtzap} and \ref{greenapn} it follows that $G[R(\cdot,Z)]^{(i)}\in\ap$ for $i=0,\dots,n-2$, since $R(\cdot,Z)\in \ap$. Thus, we conclude that $(Tz)^{(i)} \in\ap$ for $i=0,\dots,n-2$. Set
	$B_M=\big\{z\in\ap^{n-2} \, :\, \|z\|_{\mathcal{AP}^{n-2}} \leq M \,\big\}$.
	Note that $B_M$ is a closed subset of $\ap^{(n-2)}$. Also, if $z\in B_M$\,, we have that $\|z^{(i)}\|_\infty\leq M$ for all $i=0,\dots,n-2$. Hence, arguing as in \cite[Proposition 2.9]{BFP} it follows that $T(B_M)\subseteq B_M$ and $T$ is a contractive operator. Therefore, there exists a unique fixed point $z\in B_M$\,. 

\medskip
Now, consider the sequence $\{z_k\}_k\subseteq \mathcal{AP}^{n-2}$ given by $z_0=0$ and for $k\ge 0$ $z_{k+1}(t)=Tz_k(t)$. Again arguing as in \cite[Proposition 2.9]{BFP} it follows estimate \eqref{cotaz}, by using Lemma \ref{techilem}, \eqref{intgcond} and that $z_k\to z$ as $k\to +\infty$ in $\mathcal{AP}^{n-2}$, since $T$ is a contractive operator. This concludes the proof.
\end{proof}

\subsection{Other spaces of solutions} 
Considering the notation of the Proposition \ref{fixord}, let $V$ be a closed subspace of 
$$BC^{n-2}(\R,\C)=\{f:\mathbb{R}\to \mathbb{C}\mid f^{(i)}\in BC(\R,\C),\ i=0,1,\dots,n-2\},$$
$$ \text{with norm}\quad \|f\|_{BC^{n-2}}=\sup_{t\in\R} \sum_{i=0}^{n-2}\|f^{(i)}(t)\|.$$
which is invariant under the operator $T$. Then a similar result to Proposition \ref{fixord} hold if $b\in V$, under assumptions \eqref{intgcond0} and \eqref{intgcond}. Thus, there is a solution $z$ of \eqref{generaleq} and \eqref{eizg}
such that $z\in B\cap V$, where $B=\{g\in BC^{n-2}\mid \|g\|_{BC^{n-2}}\le M\}$. For instance, $\mathcal{AAP}^{n-2}=\mathcal{AP}^{n-2}\oplus BC_0^{n-2}$ is a closed subspace of $BC^{n-2}(\R, C)$ invariant under $T$, where
$$\mathcal{AAP}^{n-2}=\mathcal{AP}^{n-2}\oplus BC_0^{n-2}=\{f:\mathbb{R}\to \mathbb{C}\mid f^{(i)}\in \mathcal{AAP}(\R,\C),\ i=0,1,\dots,n-2\}.$$

However, notice that
$$W_1=\{f:\mathbb{R}\to \mathbb{C}\mid f^{(i)}\in L_0^p\cap BC(\R,\C),\ i=0,1,\dots,n-2\}\qquad\text{and}$$
$$W_2=\{f:\mathbb{R}\to \mathbb{C}\mid f^{(i)}\in L^p(\R)\cap BC(\R,\C),\ i=0,1,\dots,n-2\},$$
are invariant under $T$, but they are not closed subspaces of $BC^{n-2}(\R,\C)$, so that, we
cannot obtain directly a version of Proposition \ref{fixord} in the
subspaces either $\mathcal{AP}^{n-2}(\R,\C,p)$ or $\mathcal{AP}^{n-2}_0(\R,\C,p)$, where
$$\mathcal{AP}^{n-2}(\R,\C,p)=\{f:\mathbb{R}\to \mathbb{C}\mid f^{(i)}\in \mathcal{AP}(\R,\C,p),\ i=0,1,\dots,n-2\}\qquad\text{and}$$ 
$$\mathcal{AP}^{n-2}_0(\R,\C,p)=\{f:\mathbb{R}\to \mathbb{C}\mid f^{(i)}\in \mathcal{AP}_0(\R,\C,p),\ i=0,1,\dots,n-2\}.$$ 
Despite of this loss of
completeness, we shall find such solutions in $\mathcal{AP}^{n-2}(\R,\C,p)$ or \linebreak $\mathcal{AP}^{n-2}_0(\R,\C,p)$ by exploiting a decomposition property.

\medskip

\noindent Arguing as in \cite[Corollary 1]{FP4}, we have that the
decomposition in a sum of $b$ and $R$ induces the direct sum of
the solution $z$ and the equation \eqref{eiz} in the direct sum \eqref{tpf}, in view of
$\mathcal{AAP}^{n-2}=\mathcal{AP}^{n-2}\oplus BC_0^{n-2}$. Thus, we conclude the following

\begin{Corollary}\label{o2}
Suppose that $b=\phi+g\in\aaprc$ with $\phi\in\aprc$, $g\in BC_0(\R,\C)$
and $R=\varphi+h\in\ml{AAP}(\R\times\C^{n-1},\C)$ with $\varphi\in\ml{AP}(\R\times\C^{n-1},\C)$ and $h(\cdot,x)\in BC_0(\R,\C)$
uniformly for $x$ in compact subsets of $\C^{n-1}$. If $R$ satisfies \eqref{lipchitz} and \eqref{intgcond0} for some $M>0$, then
there is a solution $z$ of \eqref{generaleq} such that $z^{(i)}\in\aaprc$
(resp. $z^{(i)}\in\aaporc$), $i=0,\dots,n-2$, is a solution of the integral equation
\eqref{eizg} and $z=\theta+\psi$
with $\theta\in\mathcal{AP}^{n-2}$ and $\psi\in BC_0^{n-2}(\R,\C)$ (resp. $\psi\in
C_{00}^{n-2}(\R,\C))$), where $\theta$ and $\psi$ satisfy 
\begin{equation}\label{tpf}
\theta=G[\phi
+\varphi(\cdot,\Theta)]\qquad\text{and}\qquad\psi=G[g+h(\cdot,\theta)+R(\cdot,\Theta+\Psi)-R(\cdot,\Theta)]
\end{equation}
with $\Theta=(\theta,\theta',\dots,\theta^{(n-2)})$and $\Psi=(\psi,\psi',\dots,\psi^{(n-2)})$. Furthermore, if it holds \eqref{intgcond} for some $0<\beta<\min\{|\alpha_j|\ \mid 1\le j\le n-1\}$, then $z^{(i)}=O\big((I_\beta+I_{-\beta})[b]\big)$ for all $i=0,\dots,n-2$.
\end{Corollary}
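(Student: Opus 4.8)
The plan is to obtain Corollary~\ref{o2} from Proposition~\ref{fixord} by splitting the fixed-point problem along the direct sum $\mathcal{AAP}^{n-2}=\mathcal{AP}^{n-2}\oplus BC_0^{n-2}$, exactly in the spirit of \cite[Corollary 1]{FP4}. First I would observe that $b=\phi+g\in\aaprc$ and $R=\varphi+h\in\ml{AAP}(\R\times\C^{n-1},\C)$ with $\varphi\in\ml{AP}(\R\times\C^{n-1},\C)$, so by the remark in section~\ref{apfre} the hypotheses \eqref{lipchitz}--\eqref{intgcond0} let us invoke Proposition~\ref{fixord} in the closed subspace $V=\mathcal{AAP}^{n-2}$ (which is invariant under $T$ by Lemma~\ref{oer}, \eqref{derivategi}, \eqref{derivategnm1} and Lemma~\ref{rtzap} applied to the $\aaprc$ case): this yields a unique $z\in B\cap\mathcal{AAP}^{n-2}$ solving \eqref{generaleq} and \eqref{eizg} with $z^{(i)}\in\aaprc$ for $i=0,\dots,n-2$. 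Write $z=\theta+\psi$ with $\theta\in\mathcal{AP}^{n-2}$ and $\psi\in BC_0^{n-2}(\R,\C)$ (the decomposition is unique because $\aprc\cap BC_0(\R,\C)=\{0\}$ by Lemma~\ref{l1}, applied derivative-by-derivative).

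Next I would derive the coupled system \eqref{tpf}. Apply $G$ to $b+R(\cdot,Z)$ and split: $G[b]=G[\phi]+G[g]$ with $G[\phi]\in\mathcal{AP}^{n-2}$ and $G[g]\in BC_0^{n-2}$ by Lemma~\ref{oer}; for the nonlinearity write $R(\cdot,Z)=\varphi(\cdot,\Theta)+\big[h(\cdot,\Theta)+R(\cdot,\Theta+\Psi)-R(\cdot,\Theta)\big]$, where the first term lies in $\mathcal{AP}$ by Lemma~\ref{rtzap} (since $\varphi\in\ml{AP}(\R\times\C^{n-1})$ and $\theta^{(i)}\in\mathcal{AP}$), and the bracket lies in $BC_0$ because $h(\cdot,x)\in BC_0$ uniformly on compacts while $R(\cdot,\Theta+\Psi)-R(\cdot,\Theta)$ is bounded in absolute value by $\xi_M(t)\|\Psi(t)\|$ via \eqref{lipchitz}, and $\xi_M$ is bounded while $\Psi\in BC_0^{n-2}$ — so Lemma~\ref{oer0p} gives $I_{\alpha_j}$ of such functions in $BC_0$, hence $G$ of the bracket is in $BC_0^{n-2}$ by \eqref{derivategi}. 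Matching the $\mathcal{AP}^{n-2}$ and $BC_0^{n-2}$ components of $z=G[b+R(\cdot,Z)]$ under the uniqueness of the direct sum decomposition yields precisely \eqref{tpf}. The case $b\in\aaporc$, $R\in\ml{AAP}_0$ is identical, working in $V=\mathcal{AAP}_0^{n-2}$ and using $C_{00}$ in place of $BC_0$ together with the corresponding clauses of Lemmata \ref{oer0p} and \ref{oer}. Finally, the estimate $z^{(i)}=O\big((I_\beta+I_{-\beta})[b]\big)$ under the extra hypothesis \eqref{intgcond} is exactly the second conclusion of Proposition~\ref{fixord} applied in $V$, since the Picard iterates $z_0=0$, $z_{k+1}=Tz_k$ stay in $B\cap V$ and converge to $z$ in $BC^{n-2}$-norm.

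The main obstacle I expect is bookkeeping rather than conceptual: one must check carefully that the nonlinearity splits so that \emph{the} $\mathcal{AP}$-part depends only on $\Theta$ (not on $\Psi$), which forces the particular grouping $R(\cdot,Z)=\varphi(\cdot,\Theta)+\{\text{rest}\}$ and requires verifying that every leftover piece genuinely vanishes at $+\infty$ (resp.\ at $\pm\infty$) uniformly on compacts — the delicate point being the term $R(\cdot,\Theta+\Psi)-R(\cdot,\Theta)$, for which only the Lipschitz bound \eqref{lipchitz} is available and one leans on boundedness of $\xi_M$ together with $\Psi\to0$. A second, minor subtlety is that $W_1,W_2$ (the $L^p_0$ and $L^p(\R)$ analogues) are \emph{not} closed in $BC^{n-2}$, so this Corollary is deliberately stated only for the $\mathcal{AAP}$/$\mathcal{AAP}_0$ cases; the $p$-almost periodic analogue is handled separately later (Theorems \ref{teoczero} and \ref{teo35n3}) and is not part of this proof.
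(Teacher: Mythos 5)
Your proposal is correct and follows essentially the same route as the paper, which obtains this corollary exactly by applying the invariant-closed-subspace version of Proposition~\ref{fixord} with $V=\mathcal{AAP}^{n-2}$ (resp. $\mathcal{AAP}_0^{n-2}$) and then splitting the solution and the integral equation along the direct sum $\mathcal{AAP}^{n-2}=\mathcal{AP}^{n-2}\oplus BC_0^{n-2}$, arguing as in \cite[Corollary 1]{FP4}. Your bookkeeping of the decomposition $R(\cdot,Z)=\varphi(\cdot,\Theta)+\big[h(\cdot,\Theta)+R(\cdot,\Theta+\Psi)-R(\cdot,\Theta)\big]$ and the use of Lemmata \ref{rtzap}, \ref{oer0p} and \ref{oer} to place each piece in the correct summand is precisely the intended argument.
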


\medskip
Now, we present a result which will be useful to study the problem
\eqref{generaleq} when $b$ belongs to this new class of almost periodic
functions $\lpaprc$ (resp. $\lpaporc$) given in Definition 2 and
$R$ is a suitable function that allows us to find solutions to
\eqref{generaleq} in this class of functions. See \cite[Lemma 9]{FP4}, for more details.

\begin{Lemma}\label{fzp}
Assume that $R\in\ml{AP}(\R\times\C^{n-1},\C)$ and satisfies \eqref{lipchitz}. If
$z\in\lpaprc$ (resp. $z\in\lpaporc$) then
$R(\cdot,Z(\cdot))\in\lpaprc$ (resp. $R(\cdot,Z(\cdot))\in
\ml{AP}_0(\R,\C,p)$). Furthermore, if $R=\varphi+h\in
\ml{AP}(\R\times\C^{n-1},\C,p)$ (resp. $R\in \ml{AP}_0(\R\times\C^{n-1},\C,p)$) with
$\varphi\in\ml{AP}(\R\times\C^{n-1},\C)$ satisfying \eqref{lipchitz} as above and
$h(\cdot,x)\in L^p_0$ (resp. $L^p(\R)$) uniformly for $x$
in compact subsets of $\C^{n-1}$ and $z\in\lpaprc$ (resp. $\lpaporc$), then $R(\cdot,Z(\cdot))\in\lpaprc$
(resp. $\lpaporc$).
\end{Lemma}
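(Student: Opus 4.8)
The plan is to follow the line of \cite[Lemma 9]{FP4}, writing $R(\cdot,Z(\cdot))$ as an almost periodic summand that Lemma \ref{rtzap} takes care of, plus a remainder that the Lipschitz bound \eqref{lipchitz} forces into $L^p_0$ (resp. $L^p(\R)$). First I would fix a decomposition of the data: writing each $z^{(i)}=\phi_i+g_i$ with $\phi_i\in\aprc$ and $g_i\in L^p_0$ (resp. $g_i\in L^p(\R)$), $i=0,\dots,n-2$, I set $\Phi:=(\phi_0,\dots,\phi_{n-2})$. In the situations where the lemma is applied one has $\|Z(t)\|\le M$ for all $t$; since each $\phi_i$ is a local limit of translates $z^{(i)}(\cdot+s_k)$ with $s_k\to+\infty$ (a passage that uses $g_i\in L^p_0$), the point $\Phi(t)$ is a limit of the $Z(t+s_k)$, whence $\|\Phi(t)\|\le M$ as well, and therefore \eqref{lipchitz} is available both at $Z(t)$ and at $\Phi(t)$.

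Next I would split $R(t,Z(t))=R(t,\Phi(t))+\rho(t)$ with $\rho(t):=R(t,Z(t))-R(t,\Phi(t))$. Applying Lemma \ref{rtzap} to the almost periodic tuple $\Phi$ gives $t\mapsto R(t,\Phi(t))\in\aprc$. For the continuous remainder $\rho$, the estimate \eqref{lipchitz} gives $|\rho(t)|\le\xi_M(t)\sum_{i=0}^{n-2}|g_i(t)|$; since $\xi_M$ is bounded while $\sum_i|g_i|$ is bounded on $(-\infty,0]$ with $\int_0^{\infty}\big(\sum_i|g_i|\big)^p<\infty$ (resp. $\int_{-\infty}^{\infty}\big(\sum_i|g_i|\big)^p<\infty$), we obtain $\rho\in L^p_0$ (resp. $\rho\in L^p(\R)$). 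Thus $R(\cdot,Z(\cdot))=R(\cdot,\Phi(\cdot))+\rho$ has the form required in Definition \ref{faplpdef}, i.e.\ $R(\cdot,Z(\cdot))\in\lpaprc$ (resp. $\lpaporc$); and the very same computation with $\varphi$ in place of $R$ yields $\varphi(\cdot,Z(\cdot))\in\lpaprc$ (resp. $\lpaporc$), which will be used next.

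For the remaining assertion I would decompose $R=\varphi+h$, so that $R(t,Z(t))=\varphi(t,Z(t))+h(t,Z(t))$; the first term has just been handled, so it only remains to treat $h(\cdot,Z(\cdot))$. The set $K:=\overline{\{Z(t):t\in\R\}}$ is compact in $\C^{n-1}$, so by the hypothesis that $h(\cdot,x)\in L^p_0$ (resp. $L^p(\R)$) uniformly for $x$ in compact subsets there is a function $\eta\ge0$, bounded on $(-\infty,0]$ with $\int_0^{\infty}\eta^p<\infty$ (resp. $\int_{-\infty}^{\infty}\eta^p<\infty$), such that $|h(t,x)|\le\eta(t)$ for every $t\in\R$ and every $x\in K$; in particular $|h(t,Z(t))|\le\eta(t)$ and, $h$ and $Z$ being continuous, $h(\cdot,Z(\cdot))$ is a continuous function in $L^p_0$ (resp. $L^p(\R)$). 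Adding the two pieces gives $R(\cdot,Z(\cdot))\in\lpaprc$ (resp. $\lpaporc$).

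The step I expect to need genuine care is the $L^p$ bookkeeping just sketched — in particular, confining the ranges of the $z^{(i)}$ and of the $\phi_i$ to a single compact set on which both the Lipschitz control \eqref{lipchitz} and the domination of $h$ are valid (this is where the bound $\|\Phi(t)\|\le M$, obtained from $\Phi$ being a local limit of translates of $Z$, is used), and checking that the standing boundedness hypotheses on $\xi_M$ and on the derivatives $z^{(i)}$ indeed hold in the cases where the lemma is invoked. A lesser point is that the decomposition into almost periodic plus $L^p$ part must be taken separately for each derivative $z^{(i)}$, $i=0,\dots,n-2$, not only for $z$ itself.
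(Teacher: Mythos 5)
Your argument is correct and is essentially the proof the paper intends: the paper establishes this lemma only by reference to \cite[Lemma 9]{FP4}, whose argument is exactly your decomposition $R(\cdot,Z)=R(\cdot,\Phi)+\big[R(\cdot,Z)-R(\cdot,\Phi)\big]$, with the almost periodic part handled by the composition invariance (Lemma \ref{rtzap}), the remainder pushed into $L^p_0$ (resp. $L^p(\R)$) via \eqref{lipchitz}, and $h(\cdot,Z(\cdot))$ dominated on the compact closure of the range of $Z$. The implicit points you flag (that $\|Z\|_\infty\le M$ and that $\xi_M$ is bounded in the situations where the lemma is invoked) are likewise implicit in the paper's statement and do hold in its applications, so they do not constitute a gap.
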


\medskip
In order to state the next result, let us denote for a suitable
given function $\theta$,
\begin{equation}\label{ft}
\tilde R_\theta(t,W)=R(t,\Theta+W)-R(t,\Theta),
\end{equation}
where $W=(w,w',\dots, w^{(n-2)})$. Moreover, assume that $\tilde R_\theta$ satisfies \eqref{lipchitz} with
$\zeta_M=\zeta_M(t)$ instead of $\xi_M$ for some $M>0$. Applying Proposition \ref{fixord} to equation for $\psi$ in \eqref{tpf}, namely, $T$ has a fixed point in $B=\{g\in BC^{n-2}\mid \|g\|_{BC^{n-2}}\le M\}$; we deduce the following fact. We omit its proof. 

\begin{Lemma}\label{spap}
Suppose that $b=\phi+g\in \ml{AP}(\R,\C,p)$ (resp. $\ml{AP}_0(\R,\C,p)$)
with $\phi\in\aprc$ and $g\in L^{p}_0$
(resp. $L^p(\R)$), $R(\cdot,0)=0$ and $R=\varphi +h\in
\ml{AP}(\R\times\C^{n-1},\C,p)$ (resp. $\ml{AP}_0(\R\times\C^{n-1},\C,p)$) with
$\varphi\in\ml{AP}(\R\times\C^{n-1},\C)$ and $h(\cdot,x)\in L^p_0$
(resp. $L^p(\R)$) uniformly for $x$ in compact subsets of $\C^{n-1}$, so
that, there is a solution $\theta\in\mathcal{AP}^{n-2}$ of the
integral equation
\begin{equation}\label{eit}
\theta=G [\phi+\varphi(\cdot,\Theta)].
\end{equation}
If 
\begin{equation*}
	\Big\| \sum_{j=1}^{n-1}\frac{\tilde\alpha_j}{\left|\Gamma_j\right|} I_{\alpha_j}\big[\zeta_M\big] \Big\|_\infty \leq \epsilon_0 \,, \quad\ \, 
	\bigg\| \sum_{i=0}^{n-2}\left|G\big[g+h(\cdot,\Theta)\big]^{(i)}\right|\bigg\|_\infty \leq(1-\epsilon_0)M\,,
\end{equation*}
then there is a solution $w$ of the integral equation
\begin{equation}\label{eiw}
w=G[g+h(\cdot,\Theta)+\tilde R_\theta(\cdot,W)]
\end{equation}
where $\tilde R_\theta$ is given by \eqref{ft} and $\theta$ satisfies \eqref{eit}. If, in addition, there exists $0<\beta<\min\{|\alpha_j| \mid 1\leq j\leq n-1  \}$ such that 
\begin{equation}\label{intgcond1}
	\Big\| \sum_{j=1}^{n-1}\frac{\tilde\alpha_j}{\left|\Gamma_j\right|} I_{\alpha_j-{\rm sgn}(\alpha_j)\beta}\big[\zeta_M\big] \Big\|_\infty < \frac{1}{2},
\end{equation}
then for all $i=0,1,\dots,n-2$
\begin{equation}\label{ew}
w^{(i)} =O\left((I_{\beta}+I_{-\beta})[g+h(\cdot,\theta)]\right)\qquad\text{and}\qquad w^{(i)} \in L^p_0\quad\text{(resp. $L^p(\R)$)}.
\end{equation}
Conversely, if $\theta$ and $w$ are such that $\theta^{(i)}\in\aprc$ and $w^{(i)}\in L^p_0$ (resp. $L^p(\R)$)
satisfy \eqref{eit} and \eqref{eiw} respectively, then
$z=\theta+w\in \ml{AP}^{n-2}(\R,\C,p)$ (resp.
$\ml{AP}_0^{n-2}(\R,\C,p)$) is a solution to \eqref{generaleq} and \eqref{eiz}
with $b=\phi+g$ and $R=\varphi+h$.
\end{Lemma}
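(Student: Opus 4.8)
The plan is to reduce Lemma \ref{spap} to two applications of the machinery already assembled, one for the almost periodic part $\theta$ and one for the $L^p_0$ (resp. $L^p(\R)$) part $w$, and then to check that their sum solves \eqref{generaleq} and \eqref{eizg}. First I would handle $\theta$: since $b=\phi+g$ with $\phi\in\aprc$ and $R=\varphi+h$ with $\varphi\in\ml{AP}(\R\times\C^{n-1},\C)$ satisfying \eqref{lipchitz}, Proposition \ref{fixord} applied with datum $\phi$ and nonlinearity $\varphi$ (whose Lipschitz bound $\xi_M$ controls $\varphi$, and for which \eqref{intgcond0} is assumed through the first inequality of the hypotheses together with the bound on $G[b]^{(i)}$) produces a unique $\theta\in\mathcal{AP}^{n-2}$ solving the integral equation \eqref{eit}. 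Here I would note that $\varphi(\cdot,0)=0$ follows from $R(\cdot,0)=0$ and $h$ vanishing at infinity combined with Lemma \ref{l1}, or can simply be added as part of the standing hypothesis as in \cite{FP4}; and that $\varphi(\cdot,\Theta)\in\aprc$ by Lemma \ref{rtzap}, so that $G[\phi+\varphi(\cdot,\Theta)]^{(i)}\in\aprc$ by Lemma \ref{greenapn}.

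Next I would set up the equation for $w$. With $\theta$ now fixed, define $\tilde R_\theta$ by \eqref{ft}; by construction $\tilde R_\theta(\cdot,0)=0$, and the assumed Lipschitz estimate for $\tilde R_\theta$ with constant $\zeta_M$ is exactly \eqref{lipchitz} for this shifted nonlinearity. The datum of the $w$-equation is $g+h(\cdot,\Theta)$. I would invoke the version of Proposition \ref{fixord} for a closed $T$-invariant subspace $V$ described in the subsection ``Other spaces of solutions'': here we cannot take $V=W_1$ (resp. $W_2$) because those are not closed, so instead the fixed point is first obtained in $B=\{g\in BC^{n-2}\mid \|g\|_{BC^{n-2}}\le M\}$ using the two smallness hypotheses displayed in the statement (which are \eqref{intgcond0} with $\zeta_M$ in place of $\xi_M$ and with $G[g+h(\cdot,\Theta)]$ in place of $G[b]$). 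This yields $w\in B$ solving \eqref{eiw}. To upgrade $w$ to $L^p_0$ (resp. $L^p(\R)$) I would use Lemma \ref{fzp}: since $h(\cdot,x)\in L^p_0$ uniformly on compacta and $\Theta$ is bounded, $h(\cdot,\Theta)\in\lpaporc$-type integrand, so $g+h(\cdot,\Theta)\in L^p_0$ (resp. $L^p(\R)$); then Lemma \ref{oer} (the $L^p$ part) gives $G[g+h(\cdot,\Theta)]^{(i)}\in L^p_0\cap\aaprc$, and a bootstrap through the integral equation \eqref{eiw} together with Lemma \ref{fzp} applied to $\tilde R_\theta(\cdot,W)$ keeps each Picard iterate in $W_1$ (resp. $W_2$); since these iterates converge in $BC^{n-2}$ to $w$ and $L^p_0$ (resp. $L^p(\R)$) is closed under the relevant limits (the $BC^{n-2}$-limit of a sequence bounded in $L^p$ with uniformly small tails stays in $L^p$ by Fatou/dominated convergence on $[0,\infty)$), we get $w^{(i)}\in L^p_0$ (resp. $L^p(\R)$). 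The asymptotic estimate $w^{(i)}=O((I_\beta+I_{-\beta})[g+h(\cdot,\theta)])$ follows exactly as in the second half of Proposition \ref{fixord}, using Lemma \ref{techilem} and the smallness condition \eqref{intgcond1}.

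Finally I would put the pieces together. Adding \eqref{eit} and \eqref{eiw} and using the definition \eqref{ft} of $\tilde R_\theta$, linearity of $G$, and $\Theta+W=(z,z',\dots,z^{(n-2)})$ with $z=\theta+w$, one gets
\begin{align*}
z=\theta+w&=G[\phi+\varphi(\cdot,\Theta)]+G[g+h(\cdot,\Theta)+R(\cdot,\Theta+W)-R(\cdot,\Theta)]\\
&=G[\phi+g+\varphi(\cdot,\Theta)+h(\cdot,\Theta)+R(\cdot,\Theta+W)-R(\cdot,\Theta)]=G[b+R(\cdot,Z)],
\end{align*}
since $\varphi+h=R$ so that $\varphi(\cdot,\Theta)+h(\cdot,\Theta)=R(\cdot,\Theta)$ cancels against $-R(\cdot,\Theta)$. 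Hence $z$ satisfies \eqref{eizg}, and applying $\mathcal{D}$ and using $\mathcal{D}(G[f])=f$ shows $z$ solves \eqref{generaleq}. Since $\theta^{(i)}\in\aprc$ and $w^{(i)}\in L^p_0$ (resp. $L^p(\R)$), by Definition \ref{faplpdef}$(d)$ each $z^{(i)}\in\lpaprc$ (resp. $\lpaporc$), i.e. $z\in\ml{AP}^{n-2}(\R,\C,p)$ (resp. $\ml{AP}_0^{n-2}(\R,\C,p)$). The converse direction is a direct reversal: given $\theta^{(i)}\in\aprc$ and $w^{(i)}\in L^p_0$ (resp. $L^p(\R)$) satisfying \eqref{eit} and \eqref{eiw}, the same algebra shows $z=\theta+w$ satisfies \eqref{eizg}, hence \eqref{generaleq}. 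The main obstacle is the bootstrap argument showing $w^{(i)}$ genuinely lands in $L^p_0$ (resp. $L^p(\R)$) despite $W_1$ and $W_2$ failing to be closed in $BC^{n-2}$; this is handled by controlling the Picard iterates in $L^p$ uniformly (via Lemma \ref{oer} and Lemma \ref{fzp}) and passing to the limit carefully rather than by a naive closed-subspace fixed point, exactly the device flagged in the ``Other spaces of solutions'' subsection and in \cite[Lemma 9]{FP4}.
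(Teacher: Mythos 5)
Your overall architecture is the one the paper intends (it omits the proof, saying precisely that one applies Proposition \ref{fixord} to the $\psi$-equation of \eqref{tpf}, i.e.\ finds a fixed point of $w\mapsto G[g+h(\cdot,\Theta)+\tilde R_\theta(\cdot,W)]$ in the ball $B\subset BC^{n-2}$, and the sum/converse algebra via linearity of $G$ and the cancellation $\varphi(\cdot,\Theta)+h(\cdot,\Theta)=R(\cdot,\Theta)$ is exactly right). Two points, however, need repair. First, a reading issue: the two displayed inequalities in the lemma involve $\zeta_M$ (the Lipschitz function of $\tilde R_\theta$) and $G[g+h(\cdot,\Theta)]$, so they are the hypotheses \eqref{intgcond0} \emph{for the $w$-equation}; the existence of $\theta$ solving \eqref{eit} is simply assumed. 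Your first paragraph instead reads them as conditions for the $\theta$-equation and re-derives $\theta$ from Proposition \ref{fixord}, which imports smallness assumptions on $\xi_M$ and $G[b]$ that are not in the statement; this step should be dropped (take $\theta$ as given), and then your second paragraph correctly uses the displayed hypotheses where they belong.

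Second, and more substantively, your derivation of $w^{(i)}\in L^p_0$ (resp.\ $L^p(\R)$) through Picard iterates plus ``Fatou/dominated convergence'' is not justified as written: uniform convergence of iterates each lying in $W_1$ (resp.\ $W_2$) does not put the limit in $L^p$ unless the iterates carry a \emph{uniform} $L^p$ (or pointwise dominating) bound, and you never establish one; indeed such a bound is exactly what the geometric pointwise estimates of the second half of Proposition \ref{fixord} provide, and those require \eqref{intgcond1}. This also explains why the lemma asserts the $L^p$ membership only inside \eqref{ew}, i.e.\ under \eqref{intgcond1}, whereas your text claims it beforehand. The clean closing step, consistent with the statement and already within your toolkit, is: under \eqref{intgcond1} you have $w^{(i)}=O\big((I_\beta+I_{-\beta})[g+h(\cdot,\theta)]\big)$; by Lemma \ref{fzp} (with $\tilde R$ or directly with $h$) one has $g+h(\cdot,\theta)\in L^p_0$ (resp.\ $L^p(\R)$), and Lemma \ref{oer0p} then gives $(I_\beta+I_{-\beta})[g+h(\cdot,\theta)]\in L^p_0$ (resp.\ $C_{00}\cap L^p(\R)$), whence $w^{(i)}\in L^p_0$ (resp.\ $L^p(\R)$). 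With that substitution, and with the $\theta$-paragraph trimmed as above, your proof matches the intended argument.
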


\begin{Remark}
    {\rm Let us stress that the direct sum of $b$ and $R$ and the existence
of the ``almost periodic part'' $\theta$ induce the direct sum
of the solution $z$ and the equation \eqref{eizg} in the direct sum
\eqref{eit} with \eqref{eiw}, in these new classes of functions
$\lpaprc$ or $\lpaporc$. Furthermore, notice that we are not using
that $g$ and $h(\cdot,\theta)$ are bounded continuous functions,
but we need $G [g+h(\cdot,\theta)]\in BC(\R,\C)$. Notice
that an analogous version of Lemma \ref{spap} could be also
obtained for $b\in\aaprc$ or $\aaporc$ and
$R\in\ml{AAP}(\R\times\C^{n-1},\C)$ or $\ml{AAP}_0(\R\times\C^{n-1},\C)$. In
other words, we can study equation \eqref{eit} in $\aprc$ and then
equation \eqref{eiw} in either $BC_0(\R,\C)$ or $C_{00}(\R)$.}
\end{Remark}


\section{Existence of solutions for the Poincar\'e-Perron Problem}\label{espp}
We return to study Eq. \eqref{dif}, where $\mathcal{ D}$ is defined \eqref{linearz}, $P(r;\lambda)$ and $\mathcal{ L}$ are defined by \eqref{prl} and \eqref{onedee}, respectively, and $\mathcal{ F}$ is the non linear part of the equation given by \eqref{formf}.

Note that given $\omega\in\mathbb{C}$, $\omega\ne 0$ it holds
$$\int_{0}^tG_\omega[f](s)\,ds=-{1\over \omega}\int_{0}^t f(s)\,ds + {1\over \omega}\left\{G_\omega[f](t)-G_{\omega}[f](0)\right\}.$$
Hence, it follows that
  \begin{equation*}
      \begin{split}
          \int_{0}^tG[f](s)\,ds 
          =&\,(-1)^{n+1}\prod_{j=1}^{n-1}\gamma^{-1}_j \int_{0}^tf(s)\,ds  + \sum_{j=1}^{n-1}{1\over \Gamma_j\gamma_j}\left[G_{\gamma_j}[f](t)-G_{\gamma_j}[f](0)\right],
      \end{split}
  \end{equation*}
  in view of $\displaystyle\sum_{j=1}^{n-1}{1\over\Gamma_j\gamma_j}=(-1)^n \prod_{j=1}^{n-1}\gamma^{-1}_j$.
  
Taking into account Lemma \ref{dpol}, let $\gamma_j=\lambda_j-\lambda$, $j=1,\dots,n-1$ be the different roots of the characteristic polynomial $P_{\mathcal{D}}$ associated to the differential operator $\mathcal{ D}$ given by \eqref{linearz} and denote by $\alpha_j=\Re\gamma_j\not=0$\, and $\tilde{\alpha}_j=\sum_{i=0}^{n-2}|\gamma_j|^{i} $\,, for $j=1,\dots,n-1$. Note that in this situation $\Gamma_i=\prod_{k=1,k\ne i}^{n-1} (\lambda_i-\lambda_k)$ in view of \eqref{greenfor}. Recall that $\lambda$ is a fixed root of $P(a;x)$ and  $\tilde{\gamma}=\sum_{k=1}^{n-1}\frac{\tilde{\alpha}_k}{\left|\Gamma_k\right|}$. 

\medskip
\subsection{General results.} For the next result we shall estimate $\mathcal{ F}$ and $f_i$'s given by \eqref{formf} and \eqref{bimzi}, respectively. Denoting $X=(x_1,\dots,x_i)\in\R^i$ and $Y=(y_1,\dots,y_i)\in\R^i$\,, one has that
$$
\dfrac{\big|f_i(X)-f_i(Y)\big|}{\|X-Y\|} <\infty\,,
$$
for $X\not=Y$ in $\overline{B(0;\delta)}$\,, the closed ball of radius $\delta>0$ with centre in $0$\, in $\R^i$. It follows that the map
\begin{equation}\label{midef}
m_i:\delta \mapsto \sup\left\{\dfrac{\big|f_i(X)-f_i(Y)\big|}{\|X-Y\|}\mid X\not=Y\,;\,X,Y\in B(0;\delta)\right\}    
\end{equation}
is also continuous and non negative. Indeed, one has that there is a constant $C>0$ such that $m_i(\delta)\leq C\sup\{\|\nabla f_i(X)\| \mid X\in B(0;\delta) \} $. Moreover, since $f_i$ is a polynomial with terms of degree greater than 1, one obtains that $m_i(\delta)\to 0$ for $\delta\to 0$ and $m_i(\delta)\to \infty$ for $\delta\to \infty$. Thus, we have the following result.

\begin{Lemma}
Define $m(\delta)=\max_{i=2,\dots,n}\{m_i(\delta)\}\,$, where $m_i$ are defined by equation \eqref{midef}. Then, $m$ extends to a continuous map defined on $[0,\infty)$, such that $m(0)=0$, which satisfies $m(\delta)>0$ for all $\delta>0$\,, $m(\delta)\to +\infty$ as $\delta\to+\infty$  and for all $i=1,\dots,n-1$
\begin{equation}\label{mofm}
\big|f_i(X)-f_i(Y) \big|\leq m(\delta) \| X - Y \| \,,\quad \text{ for all } \, X,\,Y\in \overline{B(0;\delta)} \,.
\end{equation}       
\end{Lemma}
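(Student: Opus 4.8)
The plan is to deduce every assertion about $m$ from the properties of the finitely many auxiliary functions $m_i$ recorded above, the only subtlety being the passage from the open balls $B(0;\delta)$ used to define $m_i$ in \eqref{midef} to the closed balls $\overline{B(0;\delta)}$ appearing in \eqref{mofm}.

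First I would note that each $m_i$ is non-decreasing on $(0,\infty)$, since enlarging $\delta$ enlarges the set over which the supremum in \eqref{midef} is taken, and that $m_i(\delta)>0$ for every $\delta>0$ because the non-constant polynomial $f_i$ cannot be constant on the open ball $B(0;\delta)$. As $m$ is the pointwise maximum of finitely many continuous functions, $m$ is continuous on $(0,\infty)$, non-decreasing, and satisfies $m(\delta)\ge m_i(\delta)>0$ for all $\delta>0$. Since each $m_i(\delta)\to 0$ as $\delta\to 0^{+}$, we have $m(\delta)=\max_i m_i(\delta)\to 0$, so declaring $m(0):=0$ extends $m$ to a continuous map on $[0,\infty)$ with $m(0)=0$; and since some $m_i(\delta)\to +\infty$ as $\delta\to+\infty$, the same holds for $m$. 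This settles every claim except the Lipschitz-type estimate \eqref{mofm}.

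For \eqref{mofm}, fix an index $i$ and points $X,Y\in\overline{B(0;\delta)}$; the case $X=Y$ (and the degenerate case $\delta=0$) being trivial, assume $X\ne Y$. For every $\delta'>\delta$ we have $X,Y\in B(0;\delta')$, so the definition \eqref{midef} of $m_i$ gives
\begin{equation*}
\big|f_i(X)-f_i(Y)\big|\le m_i(\delta')\,\|X-Y\|\le m(\delta')\,\|X-Y\|\,.
\end{equation*}
Letting $\delta'\downarrow\delta$ and using the continuity of $m$ at $\delta$ established above yields $\big|f_i(X)-f_i(Y)\big|\le m(\delta)\,\|X-Y\|$, which is exactly \eqref{mofm}.

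I do not anticipate any real difficulty: the single point deserving attention is precisely the open-versus-closed ball discrepancy between \eqref{midef} and \eqref{mofm}, which is resolved by the continuity of $m$ together with the limiting argument $\delta'\downarrow\delta$. The only bookkeeping to watch is that the finite index set over which the maximum defining $m$ is taken should include every polynomial $f_i$ that occurs in the nonlinear term $\mathcal{F}$ through \eqref{formf}, so that the single function $m(\delta)$ controls all of them simultaneously.
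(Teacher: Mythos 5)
Your proof is correct and follows essentially the same route as the paper, whose justification of this lemma is precisely the discussion preceding it: continuity of each $m_i$, the bound by $\sup\|\nabla f_i\|$, the limits $m_i(\delta)\to 0$ as $\delta\to 0$ and $m_i(\delta)\to\infty$ as $\delta\to\infty$, combined via the finite maximum. Your limiting argument $\delta'\downarrow\delta$ to pass from the open ball in \eqref{midef} to the closed ball in \eqref{mofm} makes explicit a step the paper leaves implicit, and your closing remark correctly flags the index-range mismatch between the maximum over $i=2,\dots,n$ and the estimate claimed for $f_1,\dots,f_{n-1}$.
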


Then, for $\delta>0$ and $z_1,\,z_2\in \mathcal{AP}^{n-2}(\R)$ such that $\|z_i\|_{\mathcal{AP}^{n-2}}<\delta$\,, for $i=1,2$\,, we have that 
\begin{equation}\label{flipzt}
\begin{split}
\big|\mathcal{ F}\big(t,Z_1(t)\big) -\mathcal{ F}\big(t,Z_2(t)\big) \big|  &\leq m(\delta) \|Z_1(t)-Z_2(t)\|  \sum_{i=2}^{n}\sum_{j=0}^{n-i}{i+j \choose j} 
\big(|a_{i+j}|+|r_{i+j}(t)|\big)|\lambda|^j  
\end{split}
\end{equation}

Now, denote for $0\le \beta< \min\{|\alpha_j| \mid 0\leq j\leq n-1\}$
\begin{equation}\label{l}
\begin{split}
	L_\beta &:=\sum_{j=1}^{n-1}\sum_{k=1}^{n-1} \left|\frac{\tilde{\alpha}_j}{\Gamma_j k! }\right| \left\|  I_{\alpha_j-\sgn(\alpha_j)\beta}\Big[\frac{\partial^k}{\partial x^k}P(r(\cdot);\lambda)\Big]\! \right\|_\infty \,, 
\end{split}
	\end{equation} 
and
 \begin{equation}\label{q}
\begin{split}
Q_\beta &:= \sum_{j=1}^{n-1} \sum_{i=2}^{n} \sum_{k=0}^{i-2}\frac{\tilde{\alpha}_j}{\left|\Gamma_j\right|} {i\choose k}|\lambda|^{k} \Big( {|a_i|\over |\alpha_j-{\rm sgn}(\alpha_j)\beta|}+ \left\| I_{\alpha_j-\sgn(\alpha_j)\beta}\big[|r_i|\big] \right\|_\infty\Big) \,.
\end{split}
	\end{equation}
Hence, assuming $L_0<1$, let $\delta_\lambda>0$ be such that 
\begin{equation}\label{delta0}
    0<m(\delta)<\dfrac{1-L_0}{Q_0}\quad\text{ for all $\delta\in(0,\delta_\lambda)$ and } m(\delta_\lambda)=\dfrac{1-L_0}{Q_0}. 
\end{equation}

Our main result reads as follows.

\medskip

\begin{Theorem}\label{orederzun}
	Consider the equation \eqref{dif} with $r_i\in\ap(\R)$\,, for $i=1,\dots, n-1$. Let $\gamma_j$ be the different roots of the differential operator $\mathcal{ D}$ given by \eqref{linearz} and denote by $\alpha_j=\Re\gamma_j\not=0$\,, for $j=1,\dots,n-1$. Let us also denote by $\tilde{\alpha}_j=\sum_{i=0}^{n-2}|\gamma_j|^{i} $\,. Assume that 
	\begin{equation}\label{condeq}
		L_0<1\quad\text{ and }\quad H \le \max\{[1-L_0-m(\delta)Q_0]\delta \ \mid \ \delta\in(0,\delta_\lambda)\}\,,
	\end{equation}
where $L_0$ and $Q_0$ are given by \eqref{l}  and \eqref{q}  with $\beta=0$, respectively, and
\begin{equation}
   H:= \left\| \sum_{i=0}^{n-2}\left|G[P(r;\lambda)]^{(i)}\right|\right\|_\infty \,.
\end{equation}
Then, there exists a solution $y$ for the equation \eqref{poinca1} such that
\begin{equation}\label{fath1}
\begin{split}
    y_\lambda(t)=&\, e^{\lambda t} \exp\bigg((-1)^n\prod_{j=1}^{n-1}\gamma^{-1}_j \int_{0}^t\big[P(r;\lambda)(s)+\mathcal{L}(s,z(s))+\mathcal{F}(s,Z(s))\big]\,ds\bigg)\\
    & \times\exp\bigg(- \sum_{j=1}^{n-1}{1\over \Gamma_j\gamma_j} G_{\gamma_j}\big[P(r;\lambda)+\mathcal{L}(\cdot,z)+\mathcal{F}(\cdot,Z)\big](t)\bigg),
\end{split}
\end{equation}
where $z\in\mathcal{AP}^{n-2}(\mathbb{R})$ satisfy the differential equation \eqref{dif} and integral equation
\begin{equation}\label{eiz}
    z= - G[P(r;\lambda) + \mathcal{L}(\cdot, z) + \mathcal{F}(\cdot,Z)].
\end{equation}
In addition, if there exists a constant $0<\beta< \min\{|\alpha_j| \mid 1\leq j\leq n-1\}$ such that 
\begin{equation}\label{cl}
L_\beta <\frac{1}{2},
\end{equation}
then $z$ satisfies $z^{(i)}=O\big((I_{\beta}+I_{-\beta})[P(r;\lambda)]\big)$ for $i=0,1,\dots,n-2$\,. Moreover \begin{equation}\label{yjioyi}
    \frac{y_\lambda^{(i)}(t)}{y_\lambda(t)}=\lambda^{i} + O\big((I_{\beta}+I_{-\beta})[P(r;\lambda)]\big)
\end{equation}
\end{Theorem}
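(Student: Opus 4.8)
The plan is to reduce Theorem~\ref{orederzun} to the abstract fixed-point result, Proposition~\ref{fixord}, by checking that equation \eqref{dif} is of the form \eqref{generaleq} and that the quantitative hypotheses \eqref{condeq} and \eqref{cl} imply the conditions \eqref{intgcond0} and \eqref{intgcond} of that proposition. First I would set $b(t)=-P(r(t);\lambda)$ and $R(t,Z)=-\mathcal{L}(t,z)-\mathcal{F}(t,Z)$, as indicated in the Remark after \eqref{generaleq}. By Lemma~\ref{dpol} the roots $\gamma_j=\lambda_j-\lambda$ of $P_{\mathcal D}$ have nonzero real parts, so the Green operator $G$ and the scalar kernels $g_{\gamma_j}$ from \eqref{defg} are available. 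Since $r_i\in\ap(\R)$ and $P(r;\lambda)$, $\mathcal{L}$, $\mathcal{F}$ are built from the $r_i$'s by multiplication, differentiation and polynomial composition, $b\in\ap(\R)$ and $R\in\ap(\R\times\C^{n-1})$ with $R(\cdot,0)=0$ (because $f_0\equiv0$ and each $f_{i-1}$ has terms of degree $\ge2$). This is where Lemma~\ref{rtzap} and Lemma~\ref{greenapn} enter to guarantee $(Tz)^{(i)}\in\ap$.

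Next I would verify the Lipschitz bound \eqref{lipchitz}. Fix $\delta\in(0,\delta_\lambda)$ and take $M=\delta$. The linear part $\mathcal{L}(t,z)=\sum_{k=1}^{n-1}\frac{1}{k!}\frac{\partial^k}{\partial x^k}P(r;\lambda)\,z^{(k-1)}$ contributes a Lipschitz "constant" $\xi^{(1)}_M(t)=\sum_{k=1}^{n-1}\frac{1}{k!}\big|\frac{\partial^k}{\partial x^k}P(r(t);\lambda)\big|$, while the nonlinear part, via the estimate \eqref{flipzt} already derived in the text, contributes $\xi^{(2)}_M(t)=m(\delta)\sum_{i=2}^n\sum_{j=0}^{n-i}\binom{i+j}{j}(|a_{i+j}|+|r_{i+j}(t)|)|\lambda|^j$. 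Summing, $\xi_M=\xi^{(1)}_M+\xi^{(2)}_M$, and the first inequality in \eqref{intgcond0} becomes exactly $L_0+m(\delta)Q_0\le\epsilon_0$ after one applies $\sum_j\frac{\tilde\alpha_j}{|\Gamma_j|}I_{\alpha_j}[\,\cdot\,]$ to each summand and recognizes the definitions \eqref{l} and \eqref{q} with $\beta=0$ (here one also uses $I_{\alpha_j}[|a_i|]= |a_i|/|\alpha_j|$, a constant, and that $I_{\alpha_j}[\,\cdot\,]\le\|\cdot\|_\infty/|\alpha_j|$). The hypothesis $L_0<1$ lets us pick $\epsilon_0:=L_0+m(\delta)Q_0<1$ for $\delta<\delta_\lambda$ by \eqref{delta0}; then the second inequality in \eqref{intgcond0} reads $H\le(1-\epsilon_0)\delta=[1-L_0-m(\delta)Q_0]\delta$, and choosing $\delta$ to realize the maximum in \eqref{condeq} makes this hold. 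Proposition~\ref{fixord} then yields a unique $z\in\mathcal{AP}^{n-2}(\R)$ solving \eqref{eizg}, which is precisely \eqref{eiz}.

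With $z$ in hand, I would pass from the integral equation to the explicit formula \eqref{fath1}. By Theorem~\ref{theo1}, $y$ given by \eqref{yeilz}, i.e. $y(t)=\exp\!\big(\int_0^t[\lambda+z(s)]\,ds\big)$, solves \eqref{poinca1}. Substituting $z=-G[P(r;\lambda)+\mathcal{L}(\cdot,z)+\mathcal{F}(\cdot,Z)]$ and using the antiderivative identity displayed just before this subsection, namely $\int_0^t G[f](s)\,ds=(-1)^{n+1}\prod_j\gamma_j^{-1}\int_0^t f+\sum_j\frac{1}{\Gamma_j\gamma_j}[G_{\gamma_j}[f](t)-G_{\gamma_j}[f](0)]$, with $f=P(r;\lambda)+\mathcal{L}(\cdot,z)+\mathcal{F}(\cdot,Z)$, separates $\int_0^t z$ into the two exponential factors in \eqref{fath1} (the constant $G_{\gamma_j}[f](0)$ terms are absorbed into a multiplicative constant, which is harmless). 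For the last assertions, assume \eqref{cl}: $L_\beta<\tfrac12$ for some $0<\beta<\min_j|\alpha_j|$. Then the analogous bound $L_\beta+m(\delta)Q_\beta<\tfrac12$ holds after possibly shrinking $\delta$ (using $m(\delta)\to0$), which is condition \eqref{intgcond} with $\xi_M$ as above; Proposition~\ref{fixord} then gives $z^{(i)}=O\big((I_\beta+I_{-\beta})[P(r;\lambda)]\big)$ for $i=0,\dots,n-2$. Finally, \eqref{yjioyi} follows by plugging this estimate into \eqref{ynbell}, since $y_\lambda^{(i)}/y_\lambda=\lambda^i+i\lambda^{i-1}z+\sum_{j=2}^i\binom{i}{j}\lambda^{i-j}B_j(z,\dots,z^{(j-1)})$ and every term beyond $\lambda^i$ is $O(\|Z\|)=O\big((I_\beta+I_{-\beta})[P(r;\lambda)]\big)$.

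The main obstacle is bookkeeping rather than conceptual: one must carefully match the abstract Lipschitz datum $\xi_M$ coming from $\mathcal{L}$ and $\mathcal{F}$ to the concrete quantities $L_\beta$ and $Q_\beta$, keeping track of the $\tilde\alpha_j/|\Gamma_j|$ weights, the binomial coefficients, the factors $|\lambda|^j$, and the normalization $I_{\alpha_j}[1]=1/|\alpha_j|$, so that hypothesis \eqref{condeq} becomes verbatim \eqref{intgcond0}. A secondary point requiring care is the dependence of $\delta$ on the $\beta$-condition: \eqref{intgcond} must hold for the same $M=\delta$ already fixed by \eqref{condeq}, so one invokes continuity of $m$ and the strict inequality $L_\beta<\tfrac12$ to ensure $L_\beta+m(\delta)Q_\beta<\tfrac12$ still holds, possibly after decreasing $\delta$ within $(0,\delta_\lambda)$ while preserving $H\le[1-L_0-m(\delta)Q_0]\delta$.
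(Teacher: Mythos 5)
Your proposal is correct and follows essentially the same route as the paper: reduce \eqref{dif} to Proposition \ref{fixord} with $b=-P(r;\lambda)$, $R=-\mathcal{L}-\mathcal{F}$ and the Lipschitz weight \eqref{xisubm}, identify $L_0+m(\delta)Q_0$ with $\epsilon_0$ and choose $\delta=M$ realizing the maximum in \eqref{condeq}, then obtain \eqref{fath1} from the antiderivative identity for $G$ and the estimates from \eqref{intgcond} and \eqref{ynbell}. Your closing caveat about keeping the same $M$ for the $\beta$-condition is handled no more carefully in the paper itself, so nothing essential is missing.
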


\begin{proof} We shall apply Proposition \ref{fixord} to equation \eqref{dif} by taking $b(t)=-P(r(t);\lambda)$ and \linebreak$R(t,z,z',\dots,z^{(n-2)})=-\mathcal{L}(t,z)-\mathcal{F}(t,z,z',\dots,z^{(n-2)})$, so that, $b\in\ap$ and $R(\cdot, Z)\in \mathcal{AP}(\R\times\C^{n-1})$, see \eqref{onedee} and \eqref{formf}. 

Arguing as in the proof of \cite[Theorem 3.1]{BFP}, it follows that $R=-\mathcal{L}-\mathcal{F} $ satisfy the Lipschitz condition  \eqref{lipchitz} with the choice
\begin{equation}\label{xisubm}
\begin{split}
\xi_{\delta}(t)&= \sum_{k=1}^{n-1} \frac{1}{k!}\bigg|\frac{\partial^k}{\partial x^k} P(r(\cdot);\lambda)\bigg|+  m(\delta)\sum_{i=2}^{n}\sum_{j=0}^{i-2}{i\choose j}  \big(|a_{i}|+|r_{i}(t)|\big)|\lambda|^j
\end{split}
\end{equation}
and
$$\sum_{j=0}^{n-1}\left|\frac{\tilde{\alpha}_j}{\Gamma_j}\right| I_{\alpha_j-{\rm sgn}(\alpha_j)\beta}\big[\xi_{\delta}\big](t)  \leq  m(\delta)Q_\beta +L_\beta\,.$$
Note that for any function $f$ it holds $I_{\alpha_j}[f](t)\le I_{\alpha_j-\sgn(\alpha_j)\beta}[f](t),$ so that, $L_0\le L_\beta$ and $Q_0\le Q_\beta$. Hence, by definition of $\delta_\lambda>0$ and \eqref{delta0} it follows that function $g_\lambda(\delta)=[1-L_0-m_\lambda(\delta)Q_0]\delta$ satisfies $g_\lambda(0)=0=g_\lambda(\delta_\lambda)$ and $g_\lambda(\delta)>0$ for all $\delta\in(0,\delta_\lambda)$. Then, we choose $M\in(0,\delta_\lambda)$ satisfying
\begin{equation}\label{M}
g_\lambda(M)=[1-L_0-m(M)Q_0]M=\max\{[1-L_0-m(\delta)Q_0]\delta \ \mid \ \delta\in(0,\delta_\lambda)\}.
\end{equation}
Therefore, we have that $m(M)<\frac{1}{Q_0}(1-L_0)$\, and we choose $\epsilon_0=m(M)Q_0+L_0<1$. Now, by the choice of $M$ in \eqref{M}, it follows that $H$ satisfies 
\begin{equation*}
	 H  \leq [1-L_0-m_\lambda(M)Q_0]M=[1-(m_\lambda(M)Q_0+L_0)]M = (1-\epsilon_0)M 
\end{equation*}
Hence, the conditions of Proposition \ref{fixord} are satisfied, thus there exists a solution $z\in \mathcal{AP}^{n-2}$ of the integral equation \eqref{eiz} which implies that $z$ satisfies Eq. \eqref{generaleq} that corresponds in this case to \eqref{dif}. Hence, $y_\lambda$ given by  \eqref{yeilz} is a solution to equation \eqref{poinca1}. Moreover, integrating \eqref{eiz} we find that
{\small  \begin{equation*}
      \begin{split}
          \int_{0}^t z(s)\,ds =&\,(-1)^n\prod_{j=1}^{n-1}\gamma^{-1}_j \int_{0}^t\big[P(r;\lambda)(s)+\mathcal{L}(s,z(s))+\mathcal{F}(s,Z(s))\big]\,ds \\
          &\,- \sum_{j=1}^{n-1}{1\over \Gamma_j\gamma_j}\left\{G_{\gamma_j}\big[P(r;\lambda)+\mathcal{L}(\cdot,z)+\mathcal{F}(\cdot,Z)\big](t) + \text{const.}\right\} 
      \end{split}
  \end{equation*}}
Thus, we conclude \eqref{fath1}. Finally, by using \eqref{cl} and again arguing as in the proof of \cite[Theorem 3.1]{BFP} there exists $M$ small enough such that  $m(M)<\frac{1}{Q_\beta}({1\over 2}-L_\beta)$ and condition \eqref{intgcond} in Proposition \ref{fixord} is satisfied. Therefore, the solution $z$ of equation  \eqref{dif} and also satisfies
$z^{(i)}=O\big( (I_{\beta}+I_{-\beta})\big[P(r;\lambda)] \big)$\, for $i=0,\dots,n-2$\,. 
Moreover, from \eqref{ynbell}  we get immediately \eqref{yjioyi}. This finishes the proof.
\end{proof}

\begin{Remark}\label{mdelta}
    {\rm Let us stress that inequality \eqref{condeq} is an explicit sufficient condition in terms of $H$, $L_0$ and $Q_0$. The choice of $m(\delta)$ lead us to obtain $M=M(\delta_\lambda)$ satisfying $g_\lambda(M)=\max\{g_\lambda(\delta)\, :\, \delta\in(0,\delta_\lambda)\}$.	Notice that we can replace $m(\delta)$ in \eqref{mofm}, \eqref{flipzt} and \eqref{condeq} for any increasing function, say $\tilde m(\delta)$, satisfying $\tilde m(0)=0$ and $m(\delta)\le \tilde m(\delta)$.    However, assumption on $H$ becomes stronger since
 $$\max\{[1-L_0-\tilde{m}(\delta)Q_0]\delta\mid \delta\in (0,\tilde \delta_\lambda)\} \le \max\{[1-L_0- m(\delta)Q_0]\delta\mid \delta\in (0, \delta_\lambda)\}$$
 with $\tilde{\delta}_\lambda\le \delta_\lambda$ and $\tilde m(\tilde\delta_\lambda)=\dfrac{1-L_0}{Q_0}$.
    }
\end{Remark}

\medskip
Note that $L_0$, $Q_0$ and $H$ in the hypothesis of Theorem \ref{orederzun} depend of the election of $\lambda$. If we want to put in evidence this dependency we rewrite 
 $L_0^\lambda$, $Q_0^\lambda$ and $H_{\lambda}$. 

Now, we shall prove the existence of fundamental system of solutions $\{y_i\}_i$, $i=1,\dots, n$, assuming conditions of Theorem \ref{orederzun} for each $\lambda_i$, $i=1,\dots, n$, the roots of $P(a;x)$. For simplicity, we will only consider assumption \eqref{cl}. In this way, we recover Perron's result \cite{Perr}. To this purpose, we shall denote $P(r;\lambda_i)$, $\mathcal{D}_i$, $P_{\mathcal{D}_i}$ and $\mathcal{F}_i$ by \eqref{prl}, \eqref{linearz}, \eqref{defdpol} and \eqref{formf} with $\lambda=\lambda_i$ respectively.  We shall assume that $\Re(\lambda_i)\ne \Re(\lambda_j)$ for all $i\ne j$. Also, consider $\mathcal{N}(i):=\{1,\dots,n\}\setminus \{i\}$ and the bijection $\sigma_i:\mathcal{N}(i)\to \{1,\dots,n-1\}$ defined by
$$\sigma_i(j)=\begin{cases}
j&\text{if }j<i\\
j-1&\text{if }i<j
\end{cases}\qquad\text{and}\qquad \sigma_i^{-1}(j)=\begin{cases}
j&\text{if }j<i\\
j+1&\text{if }i<j
\end{cases}.$$
Thus, fixing $i\in\{1,\dots,n\}$ we have that $\gamma_{ij}=\lambda_{\sigma_i^{-1}(j)}-\lambda_i$, $j=1,\dots,n-1$ are the roots of polynomial $P_{\mathcal{D}_i}$ as shown in Lemma \ref{dpol}. Furthermore, Green's operator for equation $\mathcal{D}_iz=f$ is given by
$$
G^{\lambda_i}\big[f\big](t):=\int_{-\infty}^{\infty} \mathcal{G}_i(t,s)f(s)\,ds = \sum_{j=1}^{n-1}\frac{1}{\Gamma_{ij}}G_{\gamma_{ij}}\big[f\big](t)\,,
$$
where

\begin{equation*}
\Gamma_{ij}:= \prod_{k= 1,\,k\not=j}^{n-1} (\gamma_{ij}-\gamma_{ik})\quad\text{and}\quad \mathcal{G}_i(t,s)= \sum_{j=1}^{n-1}\frac{1}{\Gamma_{ij} }g_{\gamma_{ij}}(t,s),
\end{equation*}
with $g_\gamma$ is given by \eqref{defg}. Denote $\alpha_{ij}=\Re(\gamma_{ij})$ and  $\displaystyle\tilde\alpha_{ij}=\sum_{k=1}^{n-2}|\gamma_{ij}^k|$. For $k\in\{1,\dots,n\}$, let $\delta_{\lambda_k}>0$ be such that 
\begin{equation}\label{deltalk}
0< m(\delta)<\dfrac{1-L_0^{\lambda_k}}{Q_0^{\lambda_k}}\quad\text{for all $\delta\in(0,\delta_{\lambda_k})$ and } m(\delta_{\lambda_k})=\dfrac{1-L_0^{\lambda_k}}{Q_0^{\lambda_k}}.    
\end{equation}
For the next result and some consequences, we consider the submultiplicative norms for a matrix $B=(b_{ik})$ defined by 
$$
\|B\|_1=\max_{k}\sum_i^n|b_{ik}|
\qquad\text{and}\qquad \|B\|_\infty=\max_{i}\sum_k^n|b_{ik}|.$$ 

\begin{Theorem}\label{base}
	Consider the equation \eqref{dif} with $r_i\in\ap(\R)$\,, for $i=1,\dots, n-1$. Let $\lambda_1,\dots,\lambda_n$ be the $n$ roots of the polynomial $P(a,x)=0$ and $V$ the Vandermonde matrix of $\lambda_1,\dots,\lambda_n$. Assume that for every $k=1,\dots, n$ there is a constant $M_k\in (0,\min\limits_k \delta_{\lambda_k})$ such that 
$$
 L_0^{\lambda_k}<1\,, \quad \text{ and }\quad H_{\lambda_k}\leq \big[1-L_0^{\lambda_k}-m(M_k)Q^{\lambda_k}_0\big]M_k
 $$
and
\begin{equation}\label{conMbase}
\max_k \left(M_k\big(m(M_k)+1\big)\sum_{i=2}^{n}\big[(|\lambda_k|+1)^{i-1}-|\lambda_k|^{i-1}\big]\right)\! \|V^{-1}\|_1 <1. 
\end{equation}
Then there exists a fundamental system of solutions $y_k$, $k=1,\dots,n$ to equation \eqref{poinca1} such that
\begin{equation*}
\begin{split}
    y_{k}(t)=&\, e^{\lambda_k t}\exp\bigg(- \sum_{j=1}^{n-1}{1\over \Gamma_{kj}\gamma_{kj}} G_{\gamma_{kj}}\big[P(r;\lambda_k)+\mathcal{L}_k(\cdot,z_k)+\mathcal{F}_k(\cdot,Z_k)\big](t)\bigg)\\
    &\times \exp\bigg((-1)^n\prod_{j\in \mathcal{N}(k)}(\lambda_j-\lambda_k)^{-1} \int_{0}^t\big[P(r;\lambda_k)(s)+\mathcal{L}_k(s,z_k(s))+\mathcal{F}_k(s,Z_k(s))\big]\,ds\bigg), \end{split}
\end{equation*}
where $\mathcal{L}_i(\cdot,z_i)$ and $\mathcal{F}_i(\cdot,Z_i)$ are given by \eqref{onedee} and \eqref{formf} with $\lambda=\lambda_i$, respectively, $z_i$ satisfies differential equation
$$\mathcal{D}_iz+P(r;\lambda_i)+\mathcal{L}_i(t,z)+\mathcal{F}_i\big(t,z,\dots,z^{(n-2)}\big)=0,$$
the integral equation
$$z=- G^{\lambda_i}[P(r;\lambda_i) + \mathcal{L}_i(\cdot,z)+\mathcal{F}_i(\cdot,Z)]$$
and $z_i\in \mathcal{AP}^{n-2}(\mathbb{R})$ for $i=1,\dots,n$.
	\end{Theorem}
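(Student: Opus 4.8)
The plan is to combine the single-root existence result of Theorem~\ref{orederzun} with a perturbation argument on the Wronskian-type determinant of the asymptotic leading matrix. First, for each fixed root $\lambda_k$, the hypotheses $L_0^{\lambda_k}<1$ and $H_{\lambda_k}\leq [1-L_0^{\lambda_k}-m(M_k)Q_0^{\lambda_k}]M_k$ together with $M_k\in(0,\delta_{\lambda_k})$ are precisely what is needed to invoke Theorem~\ref{orederzun} (with $\lambda=\lambda_k$). This yields, for each $k$, a function $z_k\in\mathcal{AP}^{n-2}(\R)$ solving the Riccati-type equation $\mathcal{D}_kz+P(r;\lambda_k)+\mathcal{L}_k(t,z)+\mathcal{F}_k(t,z,\dots,z^{(n-2)})=0$ and the associated integral equation, hence a solution $y_k$ of \eqref{poinca1} of the stated exponential form, with $\|z_k\|_{\mathcal{AP}^{n-2}}\le M_k$. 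So the existence of the $n$ individual solutions and their explicit formulae is immediate; the only real content is showing that $\{y_1,\dots,y_n\}$ is a \emph{fundamental system}, i.e.\ linearly independent.

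To prove linear independence I would examine the Wronskian $W(t)=\det\big(y_k^{(i-1)}(t)\big)_{i,k=1}^{n}$, or rather its normalized version. Using \eqref{ynbell} (equivalently \eqref{yjioyi}) we have $y_k^{(i-1)}(t)/y_k(t)=\lambda_k^{i-1}+e_{ik}(t)$ where $e_{ik}(t)=O\big((I_\beta+I_{-\beta})[P(r;\lambda_k)]\big)$ collects the error terms coming from $z_k$ and the Bell-polynomial tail; crucially $\|e_{ik}\|_\infty$ is controlled by a quantity proportional to $M_k(m(M_k)+1)$ times the combinatorial sums $\sum_{i=2}^n[(|\lambda_k|+1)^{i-1}-|\lambda_k|^{i-1}]$ appearing in \eqref{conMbase}. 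Factoring out $\prod_k y_k(t)$, we get $W(t)=\prod_k y_k(t)\cdot\det\big(\lambda_k^{i-1}+e_{ik}(t)\big)$. The first factor never vanishes since each $y_k$ is an exponential of a bounded-plus-linear expression. The matrix $\big(\lambda_k^{i-1}\big)_{i,k}$ is exactly the Vandermonde matrix $V$, which is invertible because the $\lambda_k$ have distinct real parts (hence are distinct). Writing $\big(\lambda_k^{i-1}+e_{ik}(t)\big)=V(I+V^{-1}E(t))$ with $E(t)=(e_{ik}(t))$, the determinant is nonzero as soon as $\|V^{-1}E(t)\|<1$ in some submultiplicative norm. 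The estimate $\|V^{-1}E(t)\|_1\le \|V^{-1}\|_1\,\|E(t)\|_1$ and the bound on each column of $E(t)$ by $M_k(m(M_k)+1)\sum_{i=2}^n[(|\lambda_k|+1)^{i-1}-|\lambda_k|^{i-1}]$ reduce this to exactly hypothesis \eqref{conMbase}, so $W(t)\ne 0$ for all $t$, proving the $y_k$ are linearly independent.

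The main obstacle, and the step deserving the most care, is the explicit bound $\|E(t)\|_1\le \max_k M_k(m(M_k)+1)\sum_{i=2}^n[(|\lambda_k|+1)^{i-1}-|\lambda_k|^{i-1}]$. One must expand $y_k^{(i-1)}/y_k$ via \eqref{ynbell}: the leading term is $\lambda_k^{i-1}$, and every remaining term is a product of a binomial coefficient, a power $\lambda_k^{i-1-j}$, and a complete Bell polynomial $B_j(z_k,z_k',\dots,z_k^{(j-1)})$ with $j\ge1$; since $B_j$ has no constant term, each such term is $O(\|z_k\|_{\mathcal{AP}^{n-2}})$ with a Lipschitz-type constant governed by $m(\delta)$ restricted to the ball of radius $M_k$ (this is where $m(M_k)+1$ enters — the $+1$ absorbing the linear term $x_j$ of $B_j$). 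Summing the binomial contributions $\sum_{j\ge1}\binom{i-1}{j}|\lambda_k|^{i-1-j}$ telescopes to $(|\lambda_k|+1)^{i-1}-|\lambda_k|^{i-1}$, and summing over $i=1,\dots,n$ (the $i=1$ row contributing nothing, since $y_k/y_k=1$ exactly) gives the stated bracketed sum $\sum_{i=2}^n[(|\lambda_k|+1)^{i-1}-|\lambda_k|^{i-1}]$. I would carry out this bookkeeping carefully, invoking \eqref{mofm} and the structure of $f_i$ in \eqref{bimzi} as established in the preceding lemmas, and then conclude by the Vandermonde-perturbation argument above; the rest (continuity, almost periodicity of each $z_k$, the formula for $y_k$) is inherited verbatim from Theorem~\ref{orederzun}.
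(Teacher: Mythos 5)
Your proposal is correct and follows essentially the same route as the paper: invoke Theorem \ref{orederzun} at each root $\lambda_k$, then factor the Wronskian as $\prod_k y_k\cdot\det V\cdot\det[I+V^{-1}A]$ using \eqref{ynbell}, and bound the column sums of the error matrix by $M_k(m(M_k)+1)\sum_{i=2}^n[(|\lambda_k|+1)^{i-1}-|\lambda_k|^{i-1}]$ so that \eqref{conMbase} gives $\|A\|_1\|V^{-1}\|_1<1$. The only (immaterial) difference is your passing remark that the errors are $O\big((I_\beta+I_{-\beta})[P(r;\lambda_k)]\big)$, which needs the extra condition \eqref{cl} not assumed here and is not used in the independence argument.
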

\begin{proof}
First, by using the choice of $M_k$ and the definition of $\delta_{\lambda_k}$, one has that   
$$
0<\big[1-L_0^{\lambda_k}-m(M_k)Q_0^{\lambda_k}\big]M_k\leq \max\{[1-L_0^{\lambda_k}-m(\delta)Q_0^{\lambda_k}]\delta \ \mid \ \delta\in(0,\delta_{\lambda_k})\}\,,
$$
for each $k$, where $\delta_{\lambda_k}$ satisfies \eqref{deltalk}. It follows that the hypothesis of Theorem \ref{orederzun} holds, then there exists a solution $y_k$ of equation \eqref{poinca1} for $\lambda=\lambda_k$, $k=1,\dots,n$.

It remains to prove that, $\{y_{1},\dots ,y_{n}\}$ are linearly independent.	We shall show that the Wronskian of those functions, denoted by $W$, does not vanish. By equation \eqref{ynbell}, one obtains for $i\geq 1$ that
$$
 \begin{aligned}
     \frac{y_k^{(i)}(t)}{y_k(t)} 
     &= \lambda_k^{i}+\sum_{j=1}^{i}{i \choose j} \lambda_k^{i-j} \Big(f_{j-1}\big(z_k(t),z_k'(t),\dots,z_k^{(j-2)}(t)\big)+z_k^{(j-1)}\Big)\,.
 \end{aligned}
$$
where the polynomials $f_j$ are defined in \eqref{bimzi}. Then, denoting by $A=(a_{ik})$ the matrix with entries 
$$
a_{ik}=\begin{cases}
    \sum\limits_{j=1}^{i-1}{i-1 \choose j} \lambda_k^{i-1-j} \Big(f_{j-1}\big(z_k(t),z'_k(t),\dots,z_k^{(j-2)}(t)\big)+z_k^{(j-1)}\Big) & i\geq 2 \\
    0 & i=1
\end{cases}\,,
$$
$W$ can be rewritten by 
$$
W = (y_1y_2\cdots y_n )  \det \left[V(I+A)\right]=(y_1y_2\cdots y_n )  \det[V]\det[I+V^{-1}A]\,, 
$$
where $V$ is the Vandermonde matrix of $\lambda_1,\dots,\lambda_k$. Thus, it is enough to show that $I+V^{-1}A$ is an invertible matrix. It is well known that if  $\|A\|_1\|V^{-1}\|_1<1$ then the matrix $I+V^{-1}A$ is invertible.

We now estimate the $\|A\|_1$. Since $f_j(0,\dots,0)=0$ for all $j$, and using inequality \eqref{mofm}, we obtain that  
$$
\begin{aligned}
|a_{ik}| &\leq \sum_{j=1}^{i-1}{i-1 \choose j} |\lambda_k|^{i-1-j} \big(m(M_k)\|Z_k\|+\|Z_k\|\big) \\
&\leq M_k(m(M_k)+1)\sum_{j=1}^{i-1}{i-1 \choose j} |\lambda_k|^{i-1-j} \\
&\leq M_k(m(M_k)+1)\big[(|\lambda_k|+1)^{i-1}-|\lambda_k|^{i-1} \big] 
\end{aligned}
$$
for $\|Z_k\|\leq M_k$ and $i>1$ with $Z_k=(z_k,z_k'\dots,z_k^{(n-2)})$.
Notice that, if $\lambda_k=0$ then $|a_{ik}|\leq  M_k(m(M_k)+1)$ for all $i$, so in general one has 
$$
\sum\limits_{i=1}^n|a_{ik}|=\sum\limits_{i=2}^n|a_{ik}| 
\leq M_k(m(M_k)+1) \sum_{i=2}^{n} \big[(|\lambda_k|+1)^{i-1}-|\lambda_k|^{i-1} \big]
$$
Thus, we obtain that
$$
\begin{aligned}
\|A\|_1  &\leq \,\max_k \left( M_k(m(M_k)+1) \sum_{i=2}^{n} \big[(|\lambda_k|+1)^{i-1}-|\lambda_k|^{i-1} \big]\right).
\end{aligned}
$$
Therefore, by hypothesis, one obtains that $\|A\|_1\|V^{-1}\|_1<1$\,, finishing the proof.
\end{proof}

In \cite{GW} it was shown that
$$
\|V^{-1}\|_\infty\leq \max_{k}\prod_{l=1,\,l\not=k}^n\frac{1+|\lambda_l|}{|\lambda_k-\lambda_l|} 
$$
where equality holds if (but not only if) all complex numbers $\lambda_k$, are on the same ray through the origin. 
It is well known that $\|B\|_1\leq n\|B\|_\infty$. Hence, a different sufficient condition to obtain also the previous result, but avoiding the computation of $V^{-1}$, is
 $$
n\, \max_k \left( M_k\big(m(M_k)+1\big)\sum_{i=2}^{n}\big[(|\lambda_k|+1)^{i-1}-|\lambda_k|^{i-1}\big]\right)\! \max \left(\prod_{l=1,l\not=k}^n\frac{1+|\lambda_l|}{|\lambda_k-\lambda_l|} \right) <1\,.
 $$


Another kind of perturbations are $r_i\in \aapr$ for $i = 0,\dots,n-1$. Let us recall that $\ml{AAP}^{n-2}(\R, \C)$ is a closed subspace of $BC^{n-2}(\R, \C)$. From the ideas used in  Theorem \ref{orederzun} and Corollary \ref{o2} it readily follows the next result. In particular, the decomposition of the coefficients $r_i$ , $i = 0,\dots,n-1$ induces the direct sum of the solution $z$. Direct computations from the definition of $P(r;\lambda)$, $\ml L(t,z)$ and $\ml F(t,Z)$ in \eqref{prl}, \eqref{onedee} and \eqref{formf}, respectively, binomial property of Bell's polynomials \eqref{reducedz} and assuming that $r_i =\mu_i+\nu_i$\,, $i=0,\dots,n-1$ where $\mu_i\in\ap(\R)$ and $\nu_i\in BC_0(\R)$\,(resp. $C_{00}(\R)$) lead us to obtain that
$$P(r;\lambda)=P(\mu;\lambda)+P(\nu;\lambda),$$
with $\mu=(\mu_0,\dots,\mu_n)$, $\mu_n\equiv 0$, $\nu=(\nu_0,\dots,\nu_n),\quad \nu_n\equiv 0$,
$$\ml L(\cdot,\theta+\psi)=\ml L_\mu(\cdot,\theta) + \ml L_\nu(\cdot,\theta) + \ml L(\cdot,\psi)$$
and
\begin{equation}\label{ftmphi}
\ml F(\cdot,\Theta+\Psi)= \ml F_\mu(\cdot,\Theta) + \tilde{\ml F}_\nu(\cdot,\Theta) + \tilde{\ml L}_\theta(\cdot,\psi) + \tilde{\ml F}_\theta(\cdot,\Psi),    
\end{equation}
where
\begin{equation}
    \ml L_\mu(t,\theta)=\sum_{k=1}^{n-1}\frac{1}{k!} \frac{\partial^k}{\partial x^k}[P(\mu;\lambda)]\theta^{(k-1)}(t),\quad
    \ml L_\nu(t,\theta)=\sum_{k=1}^{n-1}\frac{1}{k!} \frac{\partial^k}{\partial x^k}[P(\nu;\lambda)]\theta^{(k-1)}(t),
\end{equation}
\begin{equation}
\mathcal{F}_\mu(t,\Theta)=\sum_{i=2}^{n}\sum_{j=0}^{n-i}{i+j \choose j}(a_{i+j}+\mu_{i+j}(t))\lambda^j\big(B_i(\theta,\theta',\dots,\theta^{(i-1)})-\theta^{(i-1)}\big)\,,
\end{equation}
\begin{equation}
\tilde{\mathcal{F}}_\nu(t,\Theta)=\sum_{i=2}^{n}\sum_{j=0}^{n-i}{i+j \choose j}\nu_{i+j}(t)\lambda^j\big[B_i(\theta,\theta',\dots,\theta^{(i-1)})-\theta^{(i-1)}\big]\,,
\end{equation}
\begin{equation}
\tilde{\mathcal{L}}_\theta(t,\psi)=\sum_{i=2}^{n}\sum_{j=0}^{n-i}{i+j \choose j}(a_{i+j}+ r_{i+j}(t))\lambda^j\sum_{k=1}^{i-1} {i\choose k} B_{i-k}(\theta,\theta',\dots,\theta^{(i-k-1)})\psi^{(k-1)}\,
\end{equation}
and
\begin{equation}
\begin{split}
\tilde{\mathcal{F}}_\theta(t,\psi)
=&\,\sum_{i=2}^{n}\sum_{j=0}^{n-i}{i+j \choose j}(a_{i+j}+r_{i+j}(t))\lambda^j \\
&\times \sum_{k=2}^{i} {i\choose k} B_{i-k}(\theta,\dots,\theta^{(i-k-1)})\big[B_k(\psi,\dots,\psi^{k-1}) -\psi^{(k-1)}\big]\,.    
\end{split}
\end{equation}
Notice that $P(\mu;\lambda),\frac{\partial^k}{\partial x^k}[P(\mu;\lambda)]\in\ap$ for all $k=1,\dots, n-1$ and $\ml L_\mu(\cdot,\theta),\ml F_\mu(\cdot,\Theta)\in\ap$ if $\theta\in\ap^{n-2}$, in view of $B_i(\theta,\theta',\dots,\theta^{(i-1)}) \in \ap$ for all $i=1,\dots,n$. Furthermore, for any $z\in BC^{n-2}$, $\ml L(\cdot,z)=\ml L_\mu(\cdot,z) + \ml L_\nu(\cdot,z)$ and $\ml F(\cdot,z)=\ml F_\mu(\cdot,z) + \tilde{\ml F}_\nu(\cdot,z)$.

 \begin{Theorem}\label{teoczero}
 	Consider the equation \eqref{poinca1} with $r_i\in\aap(\R)$\,(resp. $\aapor$), $i=0,\dots,n-1$. Assume that \eqref{condeq} holds. Then, there exists a solution $y$ to equation \eqref{poinca1} satisfying \eqref{fath1}, where $z$ is a solution to differential equation \eqref{dif} and $z^{(i)}\in\aap$ (resp. $\aapor$) for $i=0,1,\dots,n-2$. Moreover, if $r_i =\mu_i+\nu_i$\,, $i=0,\dots,n-1$ where $\mu_i\in\ap(\R)$ and $\nu_i\in BC_0(\R)$\,(resp. $C_{00}(\R)$), then $z=\theta+\psi$, where $\theta\in\ap^{n-2}(\R)$ and $\psi\in BC_0^{n-2}(\R)$ (resp. $C_{00}^{n-2}(\R)$) satisfy
\begin{equation}\label{thetaap}
    \theta=-G[P(\mu;\lambda) + \ml L_\mu(\cdot,\theta) + \ml F_\mu(\cdot,\Theta)]
\end{equation}
and
\begin{equation}\label{eqpsi}
    \psi=-G[P(\nu;\lambda) +\ml L_\nu(\cdot,\theta)+\tilde{\ml F}_\nu(\cdot,\Theta) + \ml L(\cdot,\psi) +\tilde{\ml L}_\theta(\cdot,\Psi) + \tilde{\ml F}_\theta(\cdot,\Psi)]
\end{equation}
 	Conversely, if $\theta\in\ap(\R)$ and $\psi\in BC_0(\R)$ (resp. $C_{00}(\R)$) satisfy equations \eqref{thetaap} and \eqref{eqpsi}, respectively, then $z=\theta+\psi$ is solution of  equation \eqref{poinca1} in $\aapr$ (resp.$\aapor$)\,.
 \end{Theorem}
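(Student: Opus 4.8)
\textbf{Proof plan for Theorem \ref{teoczero}.}
The plan is to obtain the first part (existence of $y$ with $z^{(i)}\in\aap$, resp. $\aapor$) directly from Theorem \ref{orederzun} combined with the "other spaces of solutions" machinery. Indeed, with $b(t)=-P(r(t);\lambda)$ and $R(t,Z)=-\ml L(t,z)-\ml F(t,Z)$ as in Theorem \ref{orederzun}, assumption \eqref{condeq} gives precisely the bounds \eqref{intgcond0} that make the operator $T$ a contraction on $B_M$. Since $r_i\in\aap$ (resp. $\aapor$), we have $b\in\aaprc$ (resp. $\aaporc$) by \eqref{prl}, and $R(\cdot,Z)\in\ml{AAP}(\R\times\C^{n-1})$ (resp. the $\aapor$-analogue) by \eqref{onedee}, \eqref{formf} and the direct-sum decomposition of $r_i$. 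As noted in the subsection on other spaces, $\aap^{n-2}$ (resp. $\aapor^{\,n-2}$) is a closed subspace of $BC^{n-2}(\R,\C)$ that is invariant under $T$ (invariance following from Lemma \ref{oer}, \eqref{derivategi}, \eqref{derivategnm1} and Lemma \ref{rtzap}); hence the fixed point $z$ produced by Proposition \ref{fixord} actually lies in this closed subspace, giving $z^{(i)}\in\aap$ (resp. $\aapor$) for $i=0,\dots,n-2$. Then $y=y_\lambda$ given by \eqref{yeilz} solves \eqref{poinca1} and has the form \eqref{fath1} exactly as in the proof of Theorem \ref{orederzun}.

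For the decomposition statement, the plan is to invoke Corollary \ref{o2} with the explicit identification of the almost periodic and $BC_0$ (resp. $C_{00}$) parts of $b$ and $R$. Writing $r_i=\mu_i+\nu_i$, the algebraic identities displayed just before the theorem give $b=\phi+g$ with $\phi=-P(\mu;\lambda)\in\aprc$ and $g=-P(\nu;\lambda)\in BC_0$ (resp. $C_{00}$), and $R=\varphi+h$ with $\varphi(\cdot,\Theta)=-\ml L_\mu(\cdot,\theta)-\ml F_\mu(\cdot,\Theta)\in\ml{AP}(\R\times\C^{n-1})$ and the remainder $h$ collecting the terms $-\ml L_\nu(\cdot,\theta)-\tilde{\ml F}_\nu(\cdot,\Theta)$ and the difference $R(\cdot,\Theta+\Psi)-R(\cdot,\Theta)$, which vanishes at infinity (resp. at both ends) uniformly on compacts because each $\nu_i$ does and because the Bell-polynomial differences in \eqref{ftmphi} are controlled by $\|\Psi\|$ with $\psi^{(i)}\to0$. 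Corollary \ref{o2} then yields $z=\theta+\psi$ with $\theta\in\ap^{n-2}$, $\psi\in BC_0^{n-2}$ (resp. $C_{00}^{n-2}$) satisfying the coupled system \eqref{tpf}; substituting the explicit $\phi,\varphi,g,h$ into \eqref{tpf} and using $\ml L(\cdot,z)=\ml L_\mu(\cdot,z)+\ml L_\nu(\cdot,z)$, $\ml F(\cdot,z)=\ml F_\mu(\cdot,z)+\tilde{\ml F}_\nu(\cdot,z)$ produces exactly \eqref{thetaap} and \eqref{eqpsi}.

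For the converse, the plan is to run the computation backwards: if $\theta^{(i)}\in\ap$ and $\psi^{(i)}\in BC_0$ (resp. $C_{00}$) satisfy \eqref{thetaap} and \eqref{eqpsi}, then adding the two identities and reassembling the Bell polynomials via \eqref{reducedz} recovers $z=\theta+\psi=-G[P(r;\lambda)+\ml L(\cdot,z)+\ml F(\cdot,Z)]$, i.e. the integral equation \eqref{eiz}; applying $\mathcal D$ and using $\mathcal D(G[f])=f$ shows $z$ solves \eqref{dif}, so $y$ given by \eqref{yeilz} solves \eqref{poinca1}, and $z\in\aaprc$ (resp. $\aaporc$) by construction.

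The main obstacle I expect is bookkeeping rather than analysis: verifying that the terms regrouped in \eqref{ftmphi}—in particular $\tilde{\ml L}_\theta(\cdot,\Psi)$ and $\tilde{\ml F}_\theta(\cdot,\Psi)$—really do assemble via the binomial identity \eqref{reducedz} into $R(\cdot,\Theta+\Psi)-R(\cdot,\Theta)$, and that $h(\cdot,x)\to0$ (resp. at both ends) uniformly on compact $x$-sets so that Corollary \ref{o2} applies verbatim. Once the algebraic decomposition is checked, the fixed-point and closed-subspace arguments are routine.
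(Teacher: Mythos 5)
Your plan is correct and follows essentially the paper's own route: existence of $z$ in $\mathcal{AAP}^{n-2}$ (resp. $\mathcal{AAP}_0^{n-2}$) via the Theorem \ref{orederzun}/Corollary \ref{o2} fixed-point argument in that closed $T$-invariant subspace, and the splitting of \eqref{eiz} into \eqref{thetaap}--\eqref{eqpsi} by identifying $\phi,\varphi,g,h$ and using the linearity of $P(r;\lambda)$ and $\mathcal{L}$ in $r$ together with the Bell-polynomial binomial identity \eqref{reducedz} to establish \eqref{ftmphi}. The only part you defer as ``bookkeeping'' (verifying \eqref{ftmphi}, including the regrouping into $\tilde{\mathcal{L}}_\theta(\cdot,\Psi)$ and $\tilde{\mathcal{F}}_\theta(\cdot,\Psi)$) is precisely the computation the paper's proof carries out, and your identification of the tool and of the resulting identity is the right one.
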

 
\begin{proof}
	It is enough to prove the decomposition of the equation \eqref{eiz} in these two equations \eqref{thetaap} and \eqref{eqpsi}. Note that, $P(r,\lambda)$ and ${\mathcal L}$ are linear in the variable $r$\,. 

 On the other hand, from property \eqref{reducedz} it follows that
 \begin{equation*}
   \begin{split}
       B_i(\theta+\psi,\dots,\theta^{(i-1)}+\psi^{(i-1)})=&\, B_i(\theta,\dots,\theta^{(i-1)}) \\
       & + \sum_{k=1}^{i}{i \choose k} B_{i-k}\big(\theta,\dots,\theta^{(i-1)}\big)B_k(\psi,\dots,\psi^{(k-1)})
   \end{split}  
 \end{equation*}
 so that
 \begin{equation}
     \begin{split}
         &B_i(\theta+\psi,\dots,\theta^{(i-1)}+\psi^{(i-1)})-[\theta^{(i-1)} +\psi^{(i-1)}]\\
         & =B_i(\theta,\dots,\theta^{(i-1)}) -\theta^{(i-1)} + \sum_{k=1}^{i-1}{i \choose k} B_{i-k}(\theta,\dots,\theta^{(i-1)})\psi^{(k-1)}\\
         &\quad +\sum_{k=1}^{i}{i \choose k} B_{i-k}(\theta,\dots,\theta^{(i-1)})\big[B_k(\psi,\dots,\psi^{(k-1)})-\psi^{(k-1)}\big].
     \end{split}
 \end{equation}
 Hence, we deduce that
 \begin{equation}
     \begin{split}
         &(a_{i+j} + \mu_{i+j} + \nu_{i+j}) \Big(B_i(\theta+\psi,\dots,\theta^{(i-1)}+\psi^{(i-1)})-[\theta^{(i-1)} +\psi^{(i-1)}]\Big)\\
         & = (a_{i+j} + \mu_{i+j}) \big[B_i(\theta,\dots,\theta^{(i-1)}) -\theta^{(i-1)}\big] +\nu_{i+j} \big[B_i(\theta,\dots,\theta^{(i-1)}) -\theta^{(i-1)}\big] \\
         & \quad +(a_{i+j} + \mu_{i+j} + \nu_{i+j}) \sum_{k=1}^{i-1}{i \choose k} B_{i-k}(\theta,\dots,\theta^{(i-1)})\psi^{(k-1)}\\
         &\quad +(a_{i+j} + \mu_{i+j} + \nu_{i+j})\sum_{k=1}^{i}{i \choose k} B_{i-k}(\theta,\dots,\theta^{(i-1)})\big[B_k(\psi,\dots,\psi^{(k-1)})-\psi^{(k-1)}\big].
     \end{split}
 \end{equation}
Therefore, we conclude \eqref{ftmphi}. This finishes the proof.
\end{proof}

\begin{Remark}
	{\rm
	In the same spirit of \cite[Theorems 3 and 4]{FP4}, we could perturb $r_i\in\aprc$, $i=0,\dots, n-1$ with $L^p$-functions in equation \eqref{poinca1}, see \cite{BFP,CHP2,FP1,FP3}. In other words, equation \eqref{poinca1} with $r_i\in\ap(\R,\C,p)$, $i=0,\dots, n-1$. Let us stress that $L_0^p\cap BC(\R,\C)$ and $L^p(\R)\cap BC(\R,\C)$ are not closed subspaces of $BC(\R,\C)$, so that, we cannot obtain readily a version of Theorem \ref{teoczero} in the subspaces $\ap(\R,\C,p)$ and $\ap_0(\R,\C,p)$. However, we can find such a solution $z$ in $\ap(\R,\C,p)$ or $\ap_0(\R,\C,p)$ by exploiting the decomposition property of coefficients $r_i=\mu_i+\nu_i$ ($\mu_i\in\ap(\R)$ and either $\nu_i\in BC_0(\R)$ or $C_{00}(\R)$ or $L_0^p$ or $L^p(\R)$), $i=0,\dots, n-1$, studying equations \eqref{thetaap} and \eqref{eqpsi}, and applying Proposition \ref{fixord} to \eqref{thetaap} and then Lemma \ref{spap}. For simplicity, we present a result only in case $n=3$ and next subsection. We omit its proof.}
\end{Remark}

\subsection{The case $n=3$}
In this subsection we study Theorem \ref{orederzun} and present some consequences of previous result (Theorem \ref{teoczero}) in case $n=3$, namely,
\begin{equation}\label{eqn3}
    y'''+(a_2+r_2(t))y''+(a_1+r_1(t))y'+(a_0+r_0(t))y=0.
\end{equation}
We assume that $\lambda$, $\lambda_1$ and $\lambda_2$ are the roots of $P(a;x)=x^3+a_2x^2+a_1x+a_0$, so that $\gamma_i=\lambda_i-\lambda$, $\alpha_i=\Re\gamma_i$, $i=1,2$, $\tilde\alpha_j=1+|\gamma_j|$ and $\Gamma_i=(-1)^{i}(\lambda_2-\lambda_1)$. Notice that $P(r;\lambda)$, $\mathcal{L}(\cdot,z)$ and $\mathcal{F}(\cdot,z,z')$ are given by \eqref{prn3}-\eqref{fzn3}, respectively. It is readily checked that $f_1(x_1)= x_1^2$ and $f_2(x_1,x_2) = x_1^3+3x_1x_2$.    Hence, it follows that for any $\delta>0$
    \begin{itemize}
        \item $|x|\le \delta$ and $|y|\le \delta$ implies $|f_1(x)-f_1(y)|\le 2\delta |x-y|$;
        \item $|x_1|+|x_2|\le \delta$ and $|y_1|+|y_2|\le \delta$ implies 
        $$|f_2(x_1,x_2)-f_2(y_1,y_2)|\le (3\delta^2+3\delta)\big[|x_1-y_1|+x_2-y_2|\big].$$
    \end{itemize}
Thus, according to Remark \ref{mdelta} we can choose $m(\delta)=\max\{2\delta,3\delta^2+3\delta\}=3\delta^2+3\delta$ to apply Theorem \ref{orederzun}. Furthermore, we obtain that for $0<\beta<\min\{|\alpha_1|,|\alpha_2|\}$
$$L_\beta=\sum_{i=1}^2\frac{1+|\gamma_i|}{|\lambda_1-\lambda_2|}\Big[ \big\|I_{\alpha_i-{\rm sgn}(\alpha_i)\beta}[2\lambda r_2+r_1]\big\|_\infty + \big\|I_{\alpha_i-{\rm sgn}(\alpha_i)\beta}[r_2]\big\|_\infty\Big]$$
and
$$Q_\beta=\sum_{i=1}^2\frac{1+|\gamma_i|}{|\lambda_1-\lambda_2|}\bigg[ \frac{1+3|\lambda|+|a_2|}{|\alpha_i-{\rm sgn}(\alpha_i)\beta|} + \big\|I_{\alpha_i-{\rm sgn}(\alpha_i)\beta}[r_2]\big\|_\infty\bigg].$$
Recall that $a_3=1$ and $r_3\equiv 0$. Now, assume that $L_0<1$ and consider the function $g(\delta)=[1-L_0-(3\delta^2+3\delta)Q_0]\delta$. It is readily checked that $\delta_\lambda=\frac{1}{2}\Big(\sqrt{1+\frac{4(1-L_0)}{3Q_0}} - 1\Big)>0$ satisfies
$3Q_0\delta^2 + 3Q_0 \delta + L_0 - 1=0$, namely, $3\delta^2+3\delta=\frac{1-L_0}{Q_0}$ and that $g$ attains its maximum in $(0,\delta_\lambda)$ at $M=\frac{1}{3}\Big(\sqrt{1+\frac{1-L_0}{Q_0}} - 1\Big)$, namely,
$$g(M)=M\Big[\frac{2}{3}(1-L_0) - Q_0M\Big]=\max\{g(\delta) : \delta\in (0,\delta_\lambda) \}.$$
Hence, if
$$\|G[P(r;\lambda)]\|_\infty + \|G[P(r;\lambda)]'\|_\infty\le M\Big[\frac{2}{3}(1-L_0) - Q_0M\Big],$$
then there exists a solution $y_\lambda$ to \eqref{poinca1} with $n=3$ satisfying \eqref{fath1} with $z$ solution to
\begin{equation}\label{eizn3}
    z = -G\big[\lambda^2r_2+ \lambda r_1 + r_0 + (2\lambda r_2+ r_1)z + r_2 z' + (3\lambda + a_2 + r_2) z^2 + 3zz' + z^3\big].
\end{equation}
In addition, if there exists a constant $0<\beta<\min\{|\alpha_1|,|\alpha_2|\}$ such that $L_{\beta}<\frac{1}{2}$ then $z$ satisfies $z,z'=O\big((I_\beta+I_{-\beta})[P(r;\lambda)]\big)$.

\medskip

On the other hand, assuming $r_i=\mu_i+\nu_i$, $i=0,1,2$ with $\mu_i\in\apr$ and $\nu_i\in BC_0$, $i=0,1,2$, direct computations lead us to obtain that
    \begin{equation}\label{eqthetan3}
        \theta=-G\big[\lambda^2\mu_2+ \lambda \mu_1 + \mu_0 + (2\lambda\mu_2+\mu_1)\theta + \mu_2\theta' + (3\lambda + a_2 +\mu_2) \theta^2 + 3\theta\theta' + \theta^3\big]
    \end{equation}
and
    \begin{equation}\label{eqpsin3}
    \begin{split}
        \psi=-G\Big[&P_\nu(\theta;\lambda)+r_\theta \psi+(r_2+\theta)\psi'+ (3\lambda+3\theta + a_2 +r_2)\psi^2 + 3\psi\psi' + \psi^3\Big],    
        \end{split}
    \end{equation}
according to equations \eqref{thetaap} and \eqref{eqpsi} in Theorem \ref{teoczero}, where
$$P_\nu(\theta;\lambda)=\lambda^2\nu_2+ \lambda \nu_1 + \nu_0 + (2\lambda\nu_2+\nu_1)\theta + \nu_2\theta' + \nu_2\theta^2$$
$$\text{and}\qquad r_\theta=2\lambda r_2+r_1+ 2(3\lambda+a_2+r_2)\theta + 3\theta^2+ 3\theta'.$$
Hence, for simplicity, we denote
$$L_{\beta,\theta}=\sum_{i=1}^2\frac{1+|\gamma_i|}{|\lambda_1-\lambda_2|}\Big[ \big\|I_{\alpha_i-{\rm sgn}(\alpha_i)\beta}[r_\theta]\big\|_\infty + \big\|I_{\alpha_i-{\rm sgn}(\alpha_i)\beta}[r_2+\theta]\big\|_\infty\Big]$$
and
$$Q_{\beta,\theta}=\sum_{i=1}^2 \frac{1+|\gamma_i|}{|\lambda_1-\lambda_2|} \bigg[ \frac{1+3|\lambda|+|a_2|}{|\alpha_i-{\rm sgn}(\alpha_i)\beta|} + \big\|I_{\alpha_i-{\rm sgn}(\alpha_i)\beta}[r_2+3\theta]\big\|_\infty\bigg].$$
Now, assume that $L_{0,\theta}<1$ and consider the function $g_\theta(\delta)=[1-L_{0,\theta}-(3\delta^2+3\delta)Q_{0,\theta}]\delta$. It is readily checked that $\delta_{\lambda,\theta}=\frac{1}{2}\Big(\sqrt{1+\frac{4(1-L_{0,\theta})}{3Q_{0,\theta}}} - 1\Big)>0$ satisfies
$3Q_{0,\theta}\delta^2 + 3Q_{0,\theta} \delta + L_{0,\theta} - 1=0$, namely, $3\delta^2+3\delta=\frac{1-L_{0,\theta}}{Q_{0,\theta}}$ and that $g_\theta$ attains its maximum in $(0,\delta_{\lambda,\theta})$ at \linebreak $M_\theta=\frac{1}{3}\Big(\sqrt{1+\frac{1-L_{0,\theta}}{Q_{0,\theta}}} - 1\Big)$, namely,
$$g_\theta(M_\theta)=M_\theta\Big[\frac{2}{3}(1-L_{0,\theta}) - Q_{0,\theta}M_\theta\Big]=\max\{g_\theta(\delta) : \delta\in (0,\delta_{\lambda,\theta}) \}.$$

 \begin{Theorem}\label{teo35n3}
 	Consider the equation \eqref{eqn3} with $r_i=\mu_i+\nu_i\in\aap(\R)$\,(resp. $\aapor$, $\ap(\R,\C,p)$ or $\ap_0(\R,\C,p)$), with $\mu_i\in\ap(\R)$ and $\nu_i\in BC_0(\R)$\,(resp. $C_{00}(\R)$, $L_0^p$ or $L^p(\R)$), $i=0,1,2$. Assume that there is a solution $\theta\in\apr$ to equation \eqref{eqthetan3}, so that $L_{0,\theta}<1$ and
  $$\|G[P_\nu(\theta;\lambda)]\|_\infty + \|G[P_\nu(\theta;\lambda)]'\|_\infty\le M_\theta\Big[\frac{2}{3}(1-L_{0,\theta}) - Q_{0,\theta}M_\theta\Big].$$
  Then, there exists a solution $y$ to equation \eqref{eqn3} satisfying \eqref{fth1n3}, where $z=\theta+\psi$ is a solution to integral equation \eqref{eizn3}, $z,z'\in\aap$ (resp. $\aapor$, $\ap(\R,\C,p)$ or $\ap_0(\R,\C,p)$) and $\psi\in BC_0^{1}(\R)$ (resp. $C_{00}^{1}(\R)$, $L_0^p$ or $L^p(\R)$) satisfies integral equation \eqref{eqpsin3}. In addition, if there exists a constant $0<\beta<\min\{|\alpha_1|,|\alpha_2|\}$ such that $L_{\beta,\theta}<\frac{1}{2}$ then $\psi$ satisfies $\psi,\psi'=O\big((I_\beta+I_{-\beta})[P_\nu(\theta;\lambda)]\big)$.
 \end{Theorem}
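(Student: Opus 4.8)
The plan is to recognize \eqref{eizn3} as a decomposed instance of the abstract equation \eqref{generaleq} with $n=3$, in the spirit of Theorem~\ref{teoczero}, and then to produce the ``small'' summand $\psi$ by a contraction argument on the appropriate closed space (or, in the $L^p$ cases, by the decomposed fixed point of Lemma~\ref{spap}).

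First I would verify the splitting of \eqref{eizn3}. Writing $z=\theta+\psi$ and $r_i=\mu_i+\nu_i$, using that $P(r;\lambda)$ and the block $(2\lambda r_2+r_1)z+r_2z'$ are linear in $(r,z)$, and expanding $z^2,zz',z^3$ about $\theta$ (equivalently, using the binomial relation \eqref{reducedz} for $B_1,B_2,B_3$), one checks that $z=\theta+\psi$ solves \eqref{eizn3} if and only if $\theta$ solves \eqref{eqthetan3} and $\psi$ solves \eqref{eqpsin3}: the terms carrying only $\mu_i$'s and powers/derivatives of $\theta$ assemble into \eqref{eqthetan3}, and the remaining terms---those with at least one $\nu_i$ or at least one $\psi$-derivative---assemble, after grouping, into \eqref{eqpsin3}, with $P_\nu(\theta;\lambda)$ and $r_\theta$ the groupings displayed before the statement and the part linear in $\psi$ isolated as $r_\theta\psi+(r_2+\theta)\psi'$. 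This is precisely the computation behind Theorem~\ref{teoczero} specialized to $n=3$; in particular $P_\nu(\theta;\lambda)=P(\nu;\lambda)+\mathcal{L}_\nu(\cdot,\theta)+\tilde{\mathcal{F}}_\nu(\cdot,\Theta)$, the sum of the three vanishing-at-infinity pieces of \eqref{eqpsi} for $n=3$.

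Next I would solve \eqref{eqpsin3}. Since $\theta\in\apr$ solves \eqref{eqthetan3} and $\theta=-G[\,\cdot\,]$ with an argument in $\apr$, Lemma~\ref{greenapn} gives $\theta'\in\apr$, so $\theta,\theta'$ are bounded; hence $P_\nu(\theta;\lambda)=\lambda^2\nu_2+\lambda\nu_1+\nu_0+(2\lambda\nu_2+\nu_1)\theta+\nu_2\theta'+\nu_2\theta^2$ lies in $BC_0(\R)$ (resp. $C_{00}(\R)$, $L^p_0$, $L^p(\R)$), these spaces being stable under multiplication by bounded continuous functions. I then apply Lemma~\ref{spap} in the $p$-almost periodic cases and the $\aap$/$\aapor$ variant indicated in the Remark after it---equivalently the closed-subspace version of Proposition~\ref{fixord} from the subsection ``Other spaces of solutions'' on $BC_0^1(\R)$ resp. $C_{00}^1(\R)$, or Corollary~\ref{o2}---with forcing $-P_\nu(\theta;\lambda)$ in the role of $g+h(\cdot,\theta)$ and
$$
\tilde R_\theta(t,W)=-\big[r_\theta(t)\,w+(r_2(t)+\theta(t))\,w'+(3\lambda+3\theta(t)+a_2+r_2(t))\,w^2+3ww'+w^3\big].
$$
Using $f_1(w)=w^2$, $f_2(w,w')=w^3+3ww'$, $m(\delta)=3\delta^2+3\delta$ and $|\Gamma_i|=|\lambda_1-\lambda_2|$, a direct estimate shows $\tilde R_\theta$ satisfies \eqref{lipchitz} on $\|W\|\le\delta$ with some bounded continuous $\zeta_\delta$ for which
$$
\sum_{i=1}^{2}\frac{1+|\gamma_i|}{|\lambda_1-\lambda_2|}\,\big\|I_{\alpha_i-\sgn(\alpha_i)\beta}[\zeta_\delta]\big\|_\infty\ \le\ m(\delta)\,Q_{\beta,\theta}+L_{\beta,\theta}\,,\qquad 0\le\beta<\min\{|\alpha_1|,|\alpha_2|\}\,.
$$
Taking $\beta=0$, the hypotheses $L_{0,\theta}<1$ and $M_\theta\in(0,\delta_{\lambda,\theta})$ allow the choice $\epsilon_0:=m(M_\theta)Q_{0,\theta}+L_{0,\theta}<1$, and the identity $g_\theta(M_\theta)=M_\theta[\frac23(1-L_{0,\theta})-Q_{0,\theta}M_\theta]=(1-\epsilon_0)M_\theta$ verified before the statement shows the assumed bound on $\|G[P_\nu(\theta;\lambda)]\|_\infty+\|G[P_\nu(\theta;\lambda)]'\|_\infty$ is exactly the second inequality needed. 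Hence there is $\psi$ in $BC_0^1(\R)$ (resp. $C_{00}^1(\R)$, $L^p_0$, $L^p(\R)$) solving \eqref{eqpsin3}; by the splitting, $z=\theta+\psi$ solves \eqref{eizn3}, so $z,z'\in\aap$ (resp. $\aapor$, $\ap(\R,\C,p)$, $\ap_0(\R,\C,p)$), and integrating \eqref{eizn3} over $[0,t]$ as in the proof of Theorem~\ref{orederzun} yields \eqref{fth1n3} for the corresponding $y$. For the last assertion, if $L_{\beta,\theta}<\frac12$ for some admissible $\beta$ then, since $m(\delta)\to0$ as $\delta\to0$, one may shrink $M_\theta$ so that also $m(M_\theta)Q_{\beta,\theta}+L_{\beta,\theta}<\frac12$, i.e. \eqref{intgcond1} holds; uniqueness of the fixed point identifies this $\psi$ with the one above, and Lemma~\ref{spap} gives $\psi,\psi'=O\big((I_\beta+I_{-\beta})[P_\nu(\theta;\lambda)]\big)$.

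I expect the main obstacle to be the bookkeeping of the first paragraph: showing that \eqref{eizn3} splits \emph{exactly} into \eqref{eqthetan3} and \eqref{eqpsin3}, with every $\theta$--$\psi$ cross term absorbed into $r_\theta$ and into $\tilde R_\theta$ in precisely the displayed form (this is where \eqref{reducedz} must be handled carefully), together with the closely related task of matching the Lipschitz data of $\tilde R_\theta$ with $L_{\beta,\theta}$ and $Q_{\beta,\theta}$ so that the present hypotheses become verbatim those of Lemma~\ref{spap} and its variants. The remaining steps transcribe arguments already carried out for Theorems~\ref{orederzun} and~\ref{teoczero}.
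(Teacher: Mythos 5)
Your proposal is correct and follows exactly the route the paper indicates for this result (whose proof it omits, referring back to the decomposition of Theorem \ref{teoczero} specialized to $n=3$, Proposition \ref{fixord}, and Lemma \ref{spap}): you verify the splitting of \eqref{eizn3} into \eqref{eqthetan3} and \eqref{eqpsin3} via \eqref{reducedz}, and then solve the $\psi$-equation by the contraction/decomposition argument with $\tilde R_\theta$, matching $L_{\beta,\theta}$, $Q_{\beta,\theta}$ and $(1-\epsilon_0)M_\theta=M_\theta\big[\tfrac23(1-L_{0,\theta})-Q_{0,\theta}M_\theta\big]$ with the stated hypotheses. The final big-$O$ step (shrinking $M_\theta$ so that $m(M_\theta)Q_{\beta,\theta}+L_{\beta,\theta}<\tfrac12$) is handled at the same level of rigor as the paper's own proof of Theorem \ref{orederzun}, so no further comment is needed.
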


\section{Comments and examples}\label{exams}

Consider the equation for $n=3$
\begin{equation}\label{exn3}
    y'''+r_2(t)y''-(1-r_1(t))y'+r_0(t)y=0.
\end{equation}
We have that $\lambda_1=0$, $\lambda_2=1$ and $\lambda_3=-1$ and we shall study the case $r_2=r_1\equiv 0$. Then, we obtain that $\gamma_{11}=\alpha_{11}=1$, $\gamma_{12}=\alpha_{12}=-1$ and $\Gamma_{1i}=(-1)^{i+1}2$. Also, it is readily checked that $P(r;\lambda_i)=r_0(t)$, $L_\beta^{\lambda_i}=0$, for all $0<\beta<\min\{|\alpha_{i1}|,|\alpha_{i2} |\}=1$ and $Q_\beta^{\lambda_1}=\frac{2}{1-\beta}$, $i=1,2,3$, since $\gamma_{21}=\alpha_{21}=-1$, $\gamma_{22}=\alpha_{22}=-2$, $\gamma_{31}=\alpha_{31}=1$ and $\gamma_{32}=\alpha_{32}=2$. Hence, $M_{\lambda_1}=\frac{1}{3}\Big(\sqrt{1+\frac{1-L_0^{\lambda_1}}{Q_0^{\lambda_1}}} - 1\Big)=\frac{\sqrt{6}-2}{6}\approx 0.0749$ and
$$g_{\lambda_1}(M_{\lambda_1})=M_{\lambda_1}\Big[\frac{2}{3}(1-L_0^{\lambda_1}) - Q_0^{\lambda_1}M_{\lambda_1}\Big]=\frac{3\sqrt{6}-7}{9}\approx 0.0387.$$
If $\|G^{\lambda_1}[r_0]\|_\infty + \|G^{\lambda_1}[r_0]'\|_\infty\le \frac{3\sqrt{6}-7}{9},$
then Theorem \ref{orederzun} applies, so that there exists a solution $y_1$ to \eqref{exn3} satisfying
\begin{equation*}
\begin{split}
    y_{1}(t)= & \exp\bigg( \int_{0}^t\big[r_0(s)+3z_1(s)z_1'(s)+z^3(s)\big]\,ds\bigg)\\
    & \times\exp\bigg(- \frac{1}{2}\sum_{j=1}^{2} G_{\gamma_{1j}}\big[r_0+3z_1z'_1+z_1^3\big](t)\bigg),
\end{split}
\end{equation*}
where $z_1$ satisfies the integral equation $z = -G^{\lambda_1}\big[r_0 + 3zz' + z^3\big]$, with $G^{\lambda_1}=\frac{1}{2} G_1-\frac{1}{2} G_{-1}$. 
Furthermore, $z_1,z_1'=O([I_\beta+I_{-\beta}][r_0])$ with $0<\beta<1$.

On the other hand, $\Gamma_{2i}=(-1)^{i+1}$ and $Q_\beta^{\lambda_2}=\frac{8}{1-\beta}+\frac{12}{2-\beta}$. Hence, we choose $M_{\lambda_2}=\frac{1}{3}\Big(\sqrt{1+\frac{1-L_0^{\lambda_2}}{Q_0^{\lambda_2}}} - 1\Big)=\frac{\sqrt{105} -10}{30} \approx 0.0082$ and
$$g_{\lambda_2}(M_{\lambda_2})=M_{\lambda_2}\Big[\frac{2}{3}(1-L_0^{\lambda_2}) - Q_0^{\lambda_2}M_{\lambda_2}\Big]=\frac{21\sqrt{105}-215}{45}\approx 0.0041.$$
If $\|G^{\lambda_2}[r_0]\|_\infty + \|G^{\lambda_2}[r_0]'\|_\infty\le \frac{21\sqrt{105}-215}{45},$
then Theorem \ref{orederzun} applies, so that there exists a solution $y_2$ to \eqref{exn3} satisfying
\begin{equation*}
\begin{split}
    y_{2}(t)= &\, e^{t} \exp\bigg(-\frac{1}{2}\int_{0}^t\big[r_0(s)+3z_2^2(s)+3z_2(s)z_2'(s)+z_2^3(s)\big]\,ds\bigg)\\
    & \times\exp\bigg(- \sum_{j=1}^{2} \frac{1}{\Gamma_{2j}\gamma_{2j}} G_{\gamma_{2j}}\big[r_0+3z_2^2+3z_2z'_2+z_2^3\big](t)\bigg),
\end{split}
\end{equation*}
where $z_2$ satisfies the integral equation $z = -G^{\lambda_2}\big[r_0 +3z^2+ 3zz' + z^3\big]$,\ with $G^{\lambda_2}= G_{-1}- G_{-2}$. 
Furthermore, $z_2,z_2'=O([I_\beta+I_{-\beta}][r_0])$ with $0<\beta<1$.

For $k=3$, we obtain that $\Gamma_{3i}=(-1)^{i}$ and $Q_\beta^{\lambda_3}=\frac{8}{1-\beta} +\frac{12}{2-\beta}$. Hence, we choose $M_{\lambda_3}=\frac{1}{3}\Big(\sqrt{1+\frac{1-L_0^{\lambda_3}}{Q_0^{\lambda_3}}} - 1\Big)=\frac{\sqrt{105} -10}{30} \approx 0.0082$ and
$$g_{\lambda_2}(M_{\lambda_3})=M_{\lambda_3}\Big[\frac{2}{3}(1-L_0^{\lambda_3}) - Q_0^{\lambda_3}M_{\lambda_3}\Big]=\frac{21\sqrt{105}-215}{45}\approx 0.0041.$$
If
$\|G^{\lambda_3}[r_0]\|_\infty + \|G^{\lambda_3}[r_0]'\|_\infty\le \frac{21\sqrt{105}-215}{45},$
then Theorem \ref{orederzun} applies, so that there exists a solution $y_3$ to \eqref{exn3} satisfying
\begin{equation*}
\begin{split}
    y_{3}(t)= &\, e^{t} \exp\bigg(-\frac{1}{2}\int_{0}^t\big[r_0(s)-3z_3^2(s)+3z_3(s)z_3'(s)+z_3^3(s)\big]\,ds\bigg)\\
    & \times\exp\bigg(- \sum_{j=1}^{2} \frac{1}{\Gamma_{3j}\gamma_{3j}} G_{\gamma_{3j}}\big[r_0-3z_3^2+3z_3z'_3+z_3^3\big](t)\bigg),
\end{split}
\end{equation*}
where $z_3$ satisfies the integral equation $z = -G^{\lambda_3}\big[r_0 - 3z^2+ 3zz' + z^3\big]$, with $G^{\lambda_3}= - G_{1}+ G_{2}$. 
Furthermore, $z_3,z_3'=O([I_\beta+I_{-\beta}][r_0])$ with $0<\beta<1$.

\medskip
On the other hand, it is readily checked that the Vandermonde matrix is
$$V= \left(\begin{array}{crr}
   1 & 1 & 1 \\
  0   & 1&-1 \\
  0 & 1 & 1
\end{array}\right)
\qquad\text{and} \qquad V^{-1}=\left(\begin{array}{ccc}
   1 & 0 & -1 \\
  0   & 1/2 & 1/2 \\
  0 & -1/2 & 1/2
\end{array}\right)$$
its inverse, so that $\|V^{-1}\|_1=\|V^{-1}\|_\infty=2$. Furthermore, it is readily checked that
$$\sum_{i=2}^{3}\big[(|\lambda_k|+1)^{i-1}-|\lambda_k|^{i-1}\big]=\begin{cases}
    2& \text{if } k=1\\
    4& \text{if } k=2,3
\end{cases}$$
$$M_k\big(m(M_k)+1\big)=\begin{cases}
    \dfrac{6\sqrt{6}-8}{9}& \text{if } k=1\\[0.5cm]
    \dfrac{21\sqrt{105}-200}{225}& \text{if } k=2,3
\end{cases},$$
$$\dfrac{6\sqrt{6}-8}{9}\approx 0.093 \qquad \text{and}\qquad \dfrac{21\sqrt{105}-200}{225}\approx 0.0084 .$$
Thus, assumption \eqref{conMbase} is fulfilled and we conclude that $\{y_1,y_2,y_3\}$ are linearly independent. 

\bigskip

Now, we shall study equations \eqref{eqthetan3} and \eqref{eqpsin3} for $\lambda=\lambda_1=0$. Assume $r_0=\mu_0+\nu_0$ with $\mu_0\in\apr$ and $\nu_0\in BC_0$. For simplicity we omit the dependency of $\lambda_1$. Direct computations lead us to obtain that
    \begin{equation}\label{eqthetan3ex}
        \theta=-G\big[\mu_0 + 3\theta\theta' + \theta^3\big]
    \end{equation}
and
    \begin{equation}\label{eqpsin3ex}
        \psi=-G\Big[\nu_0 +(3\theta^2+ 3\theta')\psi +\theta\psi' + 3\theta\psi^2 + 3\psi\psi' + \psi^3\Big].
    \end{equation}
According to previous computations, if $\|G[\mu_0]\|_\infty + \|G[\mu_0]'\|_\infty\le \frac{3\sqrt 6 -7}{9}=g_{\lambda_1}(M_{\lambda_1}),$ then there exists a solution $\theta\in \ap^1$ to \eqref{eqthetan3ex}, with $\|\theta\|_\infty + \|\theta'\|_\infty\le \frac{\sqrt{6} -2}{6}=M_{\lambda_1}$ and \linebreak $\theta=O([I_\beta+I_{-\beta}][\mu_0])$ with $0<\beta<1$.
Hence, $P_\nu(\theta;\lambda)=  \nu_0$, $r_\theta= 3\theta^2+ 3\theta'$,
$$L_{\beta,\theta}= \big\|I_{1-\beta}[3\theta^2+ 3\theta']\big\|_\infty + \big\|I_{1-\beta}[\theta]\big\|_\infty+ \big\|I_{-1+\beta}[3\theta^2+ 3\theta']\big\|_\infty + \big\|I_{-1+\beta}[\theta]\big\|_\infty$$
and
$$Q_{\beta,\theta}= \frac{1}{|1-\beta|} + \big\|I_{1-\beta}[3\theta]\big\|_\infty +\frac{1}{|-1+\beta|} + \big\|I_{-1+\beta}[3\theta]\big\|_\infty.$$
Notice that according to Remark \ref{cotadervG} we deduce that
$$L_{0,\theta}\le 6\|\theta\|_\infty^2+6\|\theta'\|_\infty+2\|\theta\|_\infty\le 6M^2_{\lambda_1}+6M_{\lambda_1}=\frac{\sqrt{6}-1}{3}<\frac{1}{2}<1$$
so that, $M_\theta=\frac{1}{3}\Big(\sqrt{1+\frac{1-L_{0,\theta}}{Q_{0,\theta}}} - 1\Big)$ is well defined and
$$g_\theta(M_\theta)=M_\theta\Big[\frac{2}{3}(1-L_{0,\theta}) - Q_{0,\theta}M_\theta\Big]=\max\{g_\theta(\delta) : \delta\in (0,\delta_{\lambda,\theta}) \},$$
where $\delta_{\lambda,\theta}=\frac{1}{2}\Big(\sqrt{1+\frac{4(1-L_{0,\theta})}{3Q_{0,\theta}}} - 1\Big)>0$ and it satisfies $3\delta^2+3\delta=\frac{1-L_{0,\theta}}{Q_{0,\theta}}$
Furthermore, we get that
$$2\le Q_{0,\theta}=2+\|I_1[3\theta]\|_\infty+\|I_{-1} [3\theta]\|_\infty\le 2+6\|\theta\|_\infty\le \sqrt 6$$
and $1-L_{0,\theta}\ge \frac{4-\sqrt{6}}{3}$.  This implies that
$$\frac{1}{2}\ge \frac{1}{Q_{0,\theta}}\ge \frac{1-L_{0,\theta}}{Q_{0,\theta}}\ge \frac{\frac{4-\sqrt{6}}{3}}{\sqrt{6}} = \frac{2\sqrt{6} - 3}{9}$$
and
$$\frac{\sqrt{6}-2}{6}=\frac{1}{3}\bigg(\sqrt{1+ \frac{1}{2}} - 1\bigg)\ge M_\theta\ge \frac{1}{3}\bigg(\sqrt{1+ \frac{2\sqrt{6} - 3}{9}} - 1\bigg)=\frac{\sqrt{6+2\sqrt{6}}}{9} - \frac{1}{3}.$$
Taking into account previous inequalities we obtain that
$$\frac{2}{9}\Big(\sqrt{6+2\sqrt{6}}-3\Big)\le M_\theta Q_{0,\theta}\le \sqrt{6}\cdot \frac{\sqrt{6}-2}{6}$$
and
$$g_\theta(M_\theta)\ge \frac{\sqrt{6+2\sqrt{6}}}{9} - \frac{1}{3}\bigg[\frac{2}{3}\cdot \frac{4-\sqrt{6}}{3} - \frac{6-2\sqrt{6}}{6} \bigg]=\frac{\sqrt{6}-1}{81}\Big(\sqrt{6+2\sqrt{6}}-3\Big).$$
If $\|G[\nu_0]\|_\infty + \|G[\nu_0]'\|_\infty\le \frac{\sqrt{6}-1}{81}\Big(\sqrt{6+2\sqrt{6}}-3\Big),$ then Theorem \ref{teo35n3} applies, namely, there exists a solution $\psi\in BC_0$ to \eqref{eqpsin3ex}, with $\|\psi\|_\infty+\|\psi'\|_\infty\le M_\theta$ and $\psi=O([I_\beta+I_{-\beta}][\nu_0])$ with $0<\beta<1$. Thus, there exists a solution $y$ to equation \eqref{poinca1} satisfying \eqref{fath1}, where $z=\theta+\psi$.

\bigskip
Suppose that $r_0=\mu_0+\nu_0$ with $\mu_0(t)=\eta_1\left[2+\cos t +
\cos(\sqrt{2} t)\right]$ and $\nu_0(t)={\eta_2\over 1+t^2}$, where
$\eta_1,\eta_2\ge0$. It is clear that $\mu_0$ and $\nu_0$ are
non-negative functions, $\mu_0\in\aprc$, $\nu_0\in
C_{00}(\R,\C)\cap L^1(\R)$. 
Direct computations show that
$$G^{\lambda_1}[\mu_0](t)=\frac{1}{2}G_1[\mu_0](t)-\frac{1}{2}G_{-1}[\mu_0](t)=--2\eta_1-\frac{\eta_1}{2}\cos t -\frac{\eta_1}{3}\cos(\sqrt{2} t),$$
since for any $\lambda\in\R$ and integration by parts
$$\int_t^{\mp\infty}e^{\mp(t-s)}\cos(\lambda s)\,ds=\mp \cos(\lambda t) \pm \lambda \int_t^{\mp\infty}e^{\mp (t-s)}\sin(\lambda s)\,ds
={\mp \cos(\lambda t)-\lambda\sin(\lambda t)\over 1+\lambda^2}. $$
Furthermore, we get that
$$G^{\lambda_1}[\mu_0]'(t)=\frac{\eta_1}{2}\sin t +\frac{\eta_1}{3}\sqrt{2}\sin(\sqrt{2} t),$$
and
$$\|G^{\lambda_1}[\mu_0]\|_\infty+\|G^{\lambda_1}[\mu_0]'\|_\infty\le \frac{17}{6}\eta_1 + \eta_1\Big(\frac{1}{2}+\frac{\sqrt{2}}{3}\Big)\le \frac{3\sqrt{6}-7}{9}$$
Theorem \ref{orederzun} applies. $\|G_{\pm 1}[\nu_0]\|_\infty\le \|\nu_0\|_\infty$ and $\|\nu_0\|_\infty=\eta_2$, so that
$\|G^{\lambda_1}[\nu_0]\|_\infty\le \|\nu_0\|_\infty$ and $\|G^{\lambda_1}[\nu_0]'\|_\infty\le \|\nu_0\|_\infty$. Hence, it is enough
$$2\eta_2\le \frac{\sqrt{6}-1}{81}\Big(\sqrt{6+2\sqrt{6}}-3\Big)$$
to apply Theorem \ref{teo35n3}. Similar computations can be perform for $\lambda_2$ and $\lambda_3$.

\medskip
\begin{center}
{\bf Acknowledgements}    
\end{center}
The second author has been partially supported by grant Fondecyt Regular N$^\circ$ 1201884. The third author has been supported by grant Fondecyt Regular N$^\circ$ 1170466.

\bigskip

\end{document}